\title{ Reticulation of  Quasi-commutative Algebras }
\author{George Georgescu \\ \footnotesize University of Bucharest\\ \footnotesize Faculty of Mathematics and Computer Science\\ \footnotesize Bucharest, Romania\\ \footnotesize Email: georgescu.capreni@yahoo.com}
\date{}
\begin{document}
\maketitle

\begin{abstract}

The commutator operation in a congruence-modular variety $\mathcal{V}$ allows us to define the prime congruences of any algebra $A\in \mathcal{V}$  and the prime spectrum $Spec(A)$ of $A$. The first systematic study of this spectrum can be found in a paper by Agliano, published in Universal Algebra (1993).

The reticulation of an algebra $A\in \mathcal{V}$  is a bounded distributive algebra $L(A)$, whose prime spectrum (endowed with the Stone topology) is homeomorphic to $Spec(A)$ (endowed with the topology defined by Agliano). In a recent paper, C. Mure\c{s}an and the author defined the reticulation for the algebras $A$ in a semidegenerate congruence-modular variety $\mathcal{V}$, satisfying the hypothesis $(H)$: the set $K(A)$ of compact congruences of $A$ is closed under commutators. This theory does not cover the Belluce reticulation for non-commutative rings. In this paper we shall introduce the quasi-commutative algebras in a semidegenerate congruence-modular variety $\mathcal{V}$ as a generalization of the Belluce quasi-commutative rings. We define and study a notion of reticulation for the quasi-commutative algebras such that the Belluce reticulation for the quasi-commutative rings can be obtained as a particular case. We prove a characterization theorem for the quasi-commutative algebras and some transfer properties by means of the reticulation.

\end{abstract}

\textbf{Keywords}: commutator operation, semidegenerate congruence - modular algebras, reticulation, spectral spaces, quasi-commutative algebras.

\newtheorem{definitie}{Definition}[section]
\newtheorem{propozitie}[definitie]{Proposition}
\newtheorem{remarca}[definitie]{Remark}
\newtheorem{exemplu}[definitie]{Example}
\newtheorem{intrebare}[definitie]{Open question}
\newtheorem{lema}[definitie]{Lemma}
\newtheorem{teorema}[definitie]{Theorem}
\newtheorem{corolar}[definitie]{Corollary}

\newenvironment{proof}{\noindent\textbf{Proof.}}{\hfill\rule{2mm}{2mm}\vspace*{5mm}}

\section{Introduction}

 \hspace{0.5cm} The reticulation of a commutative ring $R$ is a remarkable construction in commutative algebra. With each commutative ring $R$ is associated a bounded distributive lattice $L(A)$, fulfilling the following property; the prime spectrum of $L(A)$ (with the Stone topology) and the prime spectrum of $R$ (with the Zariski topology) are homeomorphic spaces (see \cite {Simmons}, \cite{Johnstone}). By using this property a lot of notions and results are exported from bounded distributive lattices to commutative rings and vice-versa (see \cite{Al-Ezeh2}, \cite{Dickmann}, \cite{Johnstone}, \cite{Simmons}). An axiomatic definition of an arbitrary (unital) ring R was proposed by Belluce in \cite{Belluce}. He introduced a new class of rings (named quasi-commutative rings) and proved that the ring $R$ admits a reticulation if and only if it is quasi-commutative (see Theorem 4 of \cite{Simmons}). The theory was developed in \cite{Belluce1} and \cite{Klep}, where, by using the reticulation, some algebraic and topological results on the spectra of rings were obtained.

 Inspired by ring theory, some notions of reticulation were defined for other concrete algebraic structures: MV-algebras \cite{Belluce0}, BL- algebras \cite{Leustean}, residuated lattices \cite{Muresan}, etc. To define a notion of reticulation for universal algebras is a natural problem. The commutator theory, developed by Fresee and McKenzie \cite{g} in a congruence-modular variety $\mathcal{V}$, allows us to define the prime congruences and the prime spectrum $Spec(A)$ associated with any object of $\mathcal{V}$ (see \cite{Agliano}). We shall denote by $Spec_Z(A)$ the prime spectrum $Spec(A)$ endowed with the topology defined by Agliano in \cite{Agliano} (of course, this topology is a Zariski-style topology).

 The paper \cite{GM2} was an attempt to define a reticulation for algebras in a semidegenerate congruence-modular variety $\mathcal{V}$ \cite{Kollar}. In order to build the reticulation $L(A)$ of an algebra $A\in\mathcal{V}$ we assumed the following hypothesis:

 $\bullet$ $(H)$: The set $K(A)$ of compact congruences of $A$ is closed under the commutator operation.

 In the presence of $(H)$, we proved in \cite{GM2} that the prime spectrum $Spec_Z(A)$ of the algebra $A$ and the prime spectrum $Spec_{Id,Z}(L(A))$ of the bounded distributive lattice $L(A)$ are homeomorphic. This fact ensures a lot of transfer properties from algebras to bounded distributive lattices and vice-versa (see \cite{GM2}, \cite{GKM}, etc). The hypothesis $(H)$ is verified by the commutative rings, but it is not valid in arbitrary rings. Then the theory developed in \cite{GM2} does not cover the Belluce reticulation of non-commutative rings. Moreover, the neo-commutative rings, introduced by Kaplansky in \cite{Kaplansky}, fulfill $(H)$. Then the construction of reticulation from \cite{GM2} and the related properties can be applied to the neo-commutative rings.

 In this paper we study a notion of reticulation in the framework of a semidegenerate congruence-modular variety $\mathcal{V}$. In order to build a bounded distributive lattice $L(A)$ for any $A\in\mathcal{V}$, we generalize an idea of \cite{Belluce}: we start the construction of $L(A)$ with the set $C(A)$ generated by $K(A)$ under the commutator operations and the finite joins. $L(A)$ will be the quotient $C(A)/\equiv$, where $\equiv$ is an equivalence relation defined by the radical operation.

 In general, the prime spectra $Spec_Z(A)$ and $Spec_{Id,Z}(L(A))$ are not homeomorphic. Then the main problem is to find classes of algebras in $\mathcal{V}$ for which these prime spectra are homeomorphic.

 In this paper we shall give an answer to this question. We shall define the quasi-commutative algebras in a semidegenerate congruence-modular variety $\mathcal{V}$ as generalization of the quasi-commutative rings (introduced by Belluce in \cite{Belluce}) and we shall prove that these algebras offer a suitable setting for developing a good reticulation theory.

 Now we shall present the content of paper. Section 2 contains some preliminary matter: the commutators in congruence-modular varieties, radicals of congruences and the prime spectrum of a congruence-modular algebra (see \cite{Fresee}, \cite{Agliano}).

 Section 2 concerns the reticulation theory in a semidegenerate congruence-modular variety $\mathcal{V}$. For each $A\in \mathcal{V}$ we define the bounded distributive lattice $L(A)$ and we investigate the relationship between the congruences of $A$ and the ideals of $L(A)$. We prove the existence of reticulation for the quasi-commutative algebras in $\mathcal{V}$. A characterization of the quasi-commutative algebras in algebraic and topological terms is obtained (see Theorem 3.26).

A first result of Section 3 establishes a Boolean isomorphism between the Boolean algebra $B(Con(A))$ of complemented congruences of a semiprime algebra $A\in \mathcal{V}$ and the Boolean center $B(L(A))$ of the reticulation $L(A)$ (see Proposition 4.4). We prove some preservation results for annihilators in semiprime algebras (Proposition 4.7 and 4.8), then we obtain a characterization theorem for the minimal prime congruences of any algebra $A\in \mathcal{V}$ (Theorem 4.14).

\section{Preliminaries}

 \hspace{0.5cm} In this section we shall recall some basic notions and results in a congruence-modular algebras (commutator operation \cite{Fresee}, radical of a congruence, prime congruences and topology of the prime spectrum \cite{Agliano}, \cite{GM2}). The basic references for the matter of this section are the monographs \cite{Birkhoff}, \cite{Burris}, \cite{Fresee}.

 Let $\tau$ be a finite signature of universal algebras. Throughout this paper we shall assume that the algebras have the signature $\tau$. Let $A$ be an algebra and $Con(A)$ the complete lattice of its congruences; $\Delta_A$ and $\nabla_A$ shall be the first and the last elements of $Con(A)$. If $X\subseteq A^2$ then $Cg_A(X)$ will be the congruence of $A$ generated by $X$; if $X = \{(a,b)\}$ with $a, b\in A$ then $Cg_A(a,b)$ will denote the (principal) congruence generated by $\{(a,b)\}$. We shall denote by $PCon(A)$ the set of principal congruences of $A$. $Con(A)$ is an algebraic lattice: the finitely generated congruences of $A$ are its compact elements. $K(A)$ will denote the set of compact congruences of $A$. We observe that $K(A)$ is closed under finite joins of $Con(A)$ and $\Delta_A\in K(A)$.

 For any $\theta\in Con(A)$, $A/\theta$ is the quotient algebra of $A$ w.r.t. $\theta$; if $a\in A$ then $a/\theta$ is the congruence class of $a$ modulo $\theta$. We shall denote by $p_\theta:A\rightarrow A/\theta$ the canonical surjective $\tau$ - morphism $p_\theta(a) = a/\theta$, for all $a\in A$.

 Let  $\mathcal{V}$  be a congruence - modular variety of $\tau$ - algebras. Following \cite{Fresee}, p.31, the commutator is the greatest operation $[\cdot,\cdot]_A$ on the congruence lattices $Con(A)$ of members $A$ of $\mathcal{V}$ such that for any surjective morphism $f:A\rightarrow B$ of $\mathcal{V}$ and for any $\alpha,\beta\in Con(A)$, the following conditions hold:

 (2.1) $[\alpha,\beta]_A\subseteq \alpha\bigcap \beta$;

 (2.2) $[\alpha,\beta]_A\lor Ker(f)$ = $f^{-1}([f(\alpha\lor Ker(f)),f(\beta\lor Ker(f))]_B)$.

 By \cite{Fresee} we know that the commutator operation is commutative, increasing in each argument and distributive with respect to arbitrary joins. If there is no danger of confusion then we write $[\alpha,\beta]$ instead of $[\alpha,\beta]_A$.

 \begin{propozitie}\cite{Fresee}
 For any congruence - modular variety $\mathcal{V}$ the following are equivalent:
\newcounter{nr}
\begin{list}{(\arabic{nr})}{\usecounter{nr}}
\item  $\mathcal{V}$ has Horn - Fraser property: if $A, B$ are members of $\mathcal{V}$ then the lattices $Con(A\times B)$ and $Con(A)\times Con(B)$ are isomorphic;
\item $[\nabla_A,\nabla_A] = \nabla_A$, for all $A\in \mathcal{V}$;
\item $[\theta,\nabla_A] = \theta$, for all $A\in \mathcal{V}$ and $\theta\in Con(A)$.
\end{list}
\end{propozitie}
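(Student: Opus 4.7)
The plan is to establish the cycle $(3) \Rightarrow (2) \Rightarrow (1) \Rightarrow (3)$, working entirely from the defining axioms (2.1)--(2.2) of the commutator together with its distributivity over arbitrary joins.

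The first implication is trivial: specializing $\theta := \nabla_A$ in (3) at once gives $[\nabla_A,\nabla_A] = \nabla_A$.

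For $(2) \Rightarrow (1)$, I would exploit the projection kernels $\pi_A, \pi_B \in Con(A\times B)$, which satisfy $\pi_A \cap \pi_B = \Delta_{A\times B}$ and $\pi_A \lor \pi_B = \nabla_{A\times B}$. Applying hypothesis (2) to $A\times B$ and expanding by distributivity gives
$$\nabla_{A\times B} = [\pi_A \lor \pi_B,\pi_A \lor \pi_B] = [\pi_A,\pi_A] \lor [\pi_A,\pi_B] \lor [\pi_B,\pi_A] \lor [\pi_B,\pi_B],$$
while (2.1) forces $[\pi_A,\pi_B] \subseteq \pi_A \cap \pi_B = \Delta_{A\times B}$. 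The cross-terms collapse, promoting $(\pi_A,\pi_B)$ to a pair of ``factor congruences'' in the strong sense; the Fraser--Horn decomposition argument then shows that every $\theta \in Con(A\times B)$ splits as $(\theta \lor \pi_A) \cap (\theta \lor \pi_B)$, whence $Con(A\times B) \cong Con(A) \times Con(B)$.

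For $(1) \Rightarrow (3)$, I would fix $\theta \in Con(A)$ and analyse the product $A \times (A/\theta)$ together with the canonical projection $p_\theta : A \to A/\theta$. By Horn--Fraser, $Con(A \times A/\theta) \cong Con(A) \times Con(A/\theta)$, so the commutator on the product decomposes componentwise; pulling back along $p_\theta$ via (2.2) with $\alpha = \theta$, $\beta = \nabla_A$, and comparing with the induced identity on the factor $A/\theta$, one extracts $[\theta,\nabla_A] = \theta$, the reverse inclusion being (2.1). The delicate point, and the main obstacle I anticipate, is precisely this last step: recovering an honest commutator identity on $A$ out of the purely lattice-theoretic content of the Horn--Fraser isomorphism seems to require the term-condition characterization of $[\cdot,\cdot]$ developed in \cite{Fresee}, rather than just the axioms (2.1)--(2.2) and distributivity.
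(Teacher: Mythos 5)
First, a point of comparison: the paper does not prove this proposition at all --- it is imported verbatim from Freese--McKenzie \cite{Fresee} --- so there is no internal argument to measure yours against; the proposal has to stand on its own. On its own, the skeleton is right in places ((3)$\Rightarrow$(2) is indeed immediate, and the projection kernels $\pi_A,\pi_B$ with $\pi_A\cap\pi_B=\Delta_{A\times B}$, $\pi_A\lor\pi_B=\nabla_{A\times B}$ and $[\pi_A,\pi_B]\subseteq\pi_A\cap\pi_B=\Delta_{A\times B}$ are the right objects), but both remaining implications contain essential gaps that cannot be closed from (2.1)--(2.2), monotonicity and join-distributivity alone.

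In (2)$\Rightarrow$(1), the phrase ``the Fraser--Horn decomposition argument then shows'' hides exactly the hard step. Killing the cross-terms gives $\nabla=[\pi_A,\pi_A]\lor[\pi_B,\pi_B]$, and modularity even upgrades this to $\pi_A=[\pi_A,\pi_A]$; but to prove the splitting one sets $\psi=(\theta\lor\pi_A)\cap(\theta\lor\pi_B)$ and computes $[\psi,\nabla]=[\psi,\pi_A]\lor[\psi,\pi_B]\subseteq[\theta\lor\pi_B,\pi_A]\lor[\theta\lor\pi_A,\pi_B]\subseteq\theta$, after which one still needs $\psi\subseteq[\psi,\nabla]$ --- that is, condition (3) --- to conclude $\psi\subseteq\theta$. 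Since your cycle is $(3)\Rightarrow(2)\Rightarrow(1)\Rightarrow(3)$, condition (3) is not available there, so the argument is circular unless you first prove (2)$\Rightarrow$(3); and that implication is itself the deep one: applying (2.2) to $p:A\to A/[\theta,\nabla_A]$ reduces it to showing that no algebra of $\mathcal{V}$ has a nontrivial \emph{central} congruence $\bar\theta$ (one with $[\bar\theta,\nabla]=\Delta$), which under hypothesis (2) requires the term-condition and abelian/affine-algebra machinery of \cite{Fresee}, not just the abstract axioms. The same machinery is what is missing from your (1)$\Rightarrow$(3): as you candidly note, the lattice isomorphism $Con(A\times A/\theta)\cong Con(A)\times Con(A/\theta)$ carries no commutator information by itself; the standard route is the contrapositive, namely that a failure of (3) produces a nontrivial abelian (hence affine) algebra in $\mathcal{V}$ whose square carries a skew congruence violating Fraser--Horn. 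So the proposal is an honest outline, but each of the two nontrivial implications currently defers to precisely the Freese--McKenzie results it is supposed to establish.
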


Following \cite{Kollar}, a variety $\mathcal{V}$ is semidegenerate if no nontrivial algebra in $\mathcal{V}$ has one - element subalgebras. By \cite{Kollar}, a variety $\mathcal{V}$ is semidegenerate if and only if for any algebra $A$ in $\mathcal{V}$, the congruence $\nabla_A$ is compact.

\begin{propozitie}\cite{Agliano}
If $\mathcal{V}$ is a semidegenerate congruence - modular variety then for each algebra $A$ in $\mathcal{V}$ we have $[\nabla_A,\nabla_A] = \nabla_A$.
\end{propozitie}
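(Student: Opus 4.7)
The containment $[\nabla_A,\nabla_A]\subseteq\nabla_A$ is free from axiom (2.1), so the real task is the reverse inclusion. My plan is to set $\theta:=[\nabla_A,\nabla_A]$, pass to the quotient $B:=A/\theta$, and show that $B$ is forced to be a one-element algebra. Since $\mathcal{V}$ is semidegenerate, no nontrivial member of $\mathcal{V}$ admits a one-element subalgebra; a fortiori no nontrivial member can itself be a singleton... in fact what I really want is exactly that $B$ must be trivial, which translates to $\theta=\nabla_A$.

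For the reduction to the quotient, I would apply identity (2.2) to the canonical projection $p_\theta:A\to B$ with $\alpha=\beta=\nabla_A$. Since $p_\theta$ is surjective with $Ker(p_\theta)=\theta$, the left hand side $[\nabla_A,\nabla_A]\lor\theta$ collapses to $\theta$ itself, while the right hand side equals $p_\theta^{-1}([\nabla_B,\nabla_B])$ because $p_\theta(\nabla_A\lor\theta)=\nabla_B$. Comparing the two yields $p_\theta^{-1}([\nabla_B,\nabla_B])=\theta=Ker(p_\theta)$, i.e.\ $[\nabla_B,\nabla_B]=\Delta_B$. Thus $B$ is an abelian algebra in the commutator-theoretic sense.

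The main obstacle is to show that an abelian algebra in a semidegenerate congruence-modular variety must be trivial. Here I would invoke Kollár's characterization of semidegeneracy in its compactness form: $\nabla_C\in K(C)$ for every $C\in\mathcal{V}$, in particular $\nabla_B$ is compact, so $\nabla_B=\bigvee_{i=1}^n Cg_B(a_i,b_i)$ for some finite set of pairs. Combining with the distributivity of the commutator over arbitrary joins, $\Delta_B=[\nabla_B,\nabla_B]=\bigvee_{i,j}[Cg_B(a_i,b_i),Cg_B(a_j,b_j)]$, so every principal-principal commutator vanishes in $B$. My plan is then to appeal to the Freese--McKenzie representation theorem for abelian algebras in a CM variety, which supplies a module-like structure on $B$ together with a term-definable "zero" element. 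The semidegeneracy hypothesis excludes such a distinguished one-element subalgebra in any nontrivial $C\in\mathcal V$, so $B$ must be a singleton; hence $\theta=\nabla_A$.

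I expect this last step—correctly extracting a one-element subalgebra of $B$ from its affine/module representation while staying inside the signature $\tau$ of $\mathcal{V}$—to be the technical heart of the argument; everything else is a routine bookkeeping using (2.1), (2.2) and the distributivity of $[\cdot,\cdot]$ over joins.
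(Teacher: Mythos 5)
The paper states this proposition without proof (it is cited from Agliano), so there is no in-paper argument to compare against; judging your proposal on its own terms, the reduction step is correct but the final step has a real gap. Applying (2.2) to $p_\theta$ with $\alpha=\beta=\nabla_A$ does give $p_\theta^{-1}([\nabla_B,\nabla_B])=\theta=Ker(p_\theta)$, hence $[\nabla_B,\nabla_B]=\Delta_B$, so everything reduces to showing that a semidegenerate congruence-modular variety has no nontrivial abelian member. (The intermediate paragraph decomposing $\nabla_B$ into principal congruences and their commutators is not used for anything.)

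The gap is exactly where you suspected it, and the mechanism you propose cannot close it. In the Freese--McKenzie affine representation of an abelian algebra the zero of the associated module is a \emph{polynomial} constant -- an arbitrarily chosen basepoint -- not a term-definable element, and $\{0\}$ need not be a subuniverse of $B$ in the signature $\tau$. Concretely, $B=(\mathbb{Z}_2,\ x+y+z,\ x+1)$ is a nontrivial abelian algebra in a congruence-permutable (hence congruence-modular) variety that has no one-element subalgebras whatsoever, so no argument extracting a singleton subalgebra of $B$ itself can succeed. The standard repair is to pass to $B\times B$: by basic commutator theory, $B$ is abelian precisely when the diagonal $\{(b,b)\mid b\in B\}$ is a single block of a congruence $\Delta(\nabla_B,\nabla_B)$ on $B\times B$. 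Since the diagonal is always a subuniverse of $B\times B$, its image is a one-element subalgebra of the quotient $(B\times B)/\Delta(\nabla_B,\nabla_B)$, and that quotient is nontrivial exactly when $|B|>1$; semidegeneracy therefore forces $|B|=1$, i.e.\ $[\nabla_A,\nabla_A]=\nabla_A$. (In the example above this is visible directly: $B^2$ modulo the diagonal congruence is a two-element algebra in which the class of $(0,0)$ is fixed by every operation, so that variety is indeed not semidegenerate.) Keep your first two paragraphs, drop the ``term-definable zero'' step, and substitute the diagonal-congruence argument.
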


Let $A$ be a semidegenerate congruence - modular algebra. One can define on the lattice $Con(A)$ a residuation operation ( = implication) $\alpha \rightarrow \beta = \bigvee \{ \gamma|[\alpha,\gamma] \subseteq \beta\}$ and an annihilator operation ( = negation) $\alpha^{\bot } = \alpha^{\bot_A } =  \alpha \rightarrow \Delta_A =\bigvee \{\gamma |[\alpha,\gamma] = \Delta_A\}$. The implication $\rightarrow$ fulfills the usual residuation property: for all $\alpha,\beta,\gamma\in Con(A)$, $\alpha\subseteq \beta\rightarrow\gamma$ if and only if $[\alpha,\beta]\subseteq\gamma$. We remark that $(Con(A), \lor, \land, [\cdot,\cdot], \rightarrow, \Delta_A, \nabla_A)$ is a commutative and integral complete $l$ - groupoid (see \cite{Birkhoff}).

Now let us fix an algebra $A$ in a semidegenerate congruence - modular variety $\mathcal{V}$.

\begin{lema}\cite{GM2}
 For all congruences $\alpha, \beta, \gamma, $ the following hold:
\usecounter{nr}
\begin{list}{(\arabic{nr})}{\usecounter{nr}}
\item $\alpha\lor \beta = \nabla_A$ implies $[\alpha,\beta] = \alpha\bigcap \beta$;
\item $\alpha\lor \beta = \alpha\lor \gamma = \nabla_A$ implies $\alpha\lor [\beta,\gamma] = \alpha\lor (\beta\bigcap \gamma) = \nabla_A$;
\item $\alpha\lor \beta = \nabla_A$ implies $[\alpha,\alpha]^n\lor [\beta,\beta]^n = \nabla_A$, for all integer $n > 0$;

\end{list}
\end{lema}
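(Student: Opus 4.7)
My approach rests on two structural facts for any semidegenerate congruence-modular variety: Proposition 2.2 yields $[\nabla_A,\nabla_A]=\nabla_A$, and then Proposition 2.1 upgrades this to the Horn--Fraser property and, equivalently, to the identity $[\theta,\nabla_A]=\theta$ for every $\theta\in Con(A)$. I will combine these with the standard features of the commutator recalled after Proposition 2.1 --- commutativity, monotonicity in each argument, and distributivity over arbitrary joins --- together with the universal inclusion $[\alpha,\beta]\subseteq\alpha\cap\beta$ from (2.1).

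For (1), only the nontrivial inclusion $\alpha\cap\beta\subseteq[\alpha,\beta]$ needs work. Using $\alpha\lor\beta=\nabla_A$ and the identity $[\theta,\nabla_A]=\theta$, I rewrite $\alpha\cap\beta=[\alpha\cap\beta,\nabla_A]=[\alpha\cap\beta,\alpha\lor\beta]$, distribute to obtain $[\alpha\cap\beta,\alpha]\lor[\alpha\cap\beta,\beta]$, and then observe that monotonicity (using $\alpha\cap\beta\subseteq\beta$ and $\alpha\cap\beta\subseteq\alpha$) together with commutativity send each summand inside $[\alpha,\beta]$.

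For (2), I expand $\nabla_A=[\nabla_A,\nabla_A]=[\alpha\lor\beta,\alpha\lor\gamma]$ by distributivity, obtaining the four summands $[\alpha,\alpha]$, $[\alpha,\gamma]$, $[\beta,\alpha]$, and $[\beta,\gamma]$. The first three all lie below $\alpha$ by (2.1), so the equation collapses to $\alpha\lor[\beta,\gamma]=\nabla_A$. The second claim $\alpha\lor(\beta\cap\gamma)=\nabla_A$ follows at once from $[\beta,\gamma]\subseteq\beta\cap\gamma$.

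For (3), I argue by induction on $n$, reading $[\theta,\theta]^{n+1}$ as $[[\theta,\theta]^n,[\theta,\theta]^n]$. The base case $n=1$ consists of two successive applications of part (2): from $\alpha\lor\beta=\nabla_A$, instance (2) with $(\alpha,\beta,\gamma)$ replaced by $(\alpha,\beta,\beta)$ yields $\alpha\lor[\beta,\beta]=\nabla_A$, and a second application with $([\beta,\beta],\alpha,\alpha)$ in place of $(\alpha,\beta,\gamma)$ yields $[\beta,\beta]\lor[\alpha,\alpha]=\nabla_A$. The inductive step is then immediate: applying the base case to the pair $[\alpha,\alpha]^n,[\beta,\beta]^n$ produces $[\alpha,\alpha]^{n+1}\lor[\beta,\beta]^{n+1}=\nabla_A$. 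The only real obstacle here is clerical bookkeeping --- correctly matching the role of each congruence when invoking (2), and pinning down the intended convention for $[\alpha,\alpha]^n$; once these are fixed, every step is a one-line distributivity-and-monotonicity calculation resting on Propositions 2.1 and 2.2.
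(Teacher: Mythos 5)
Your proof is correct. The paper itself gives no argument for this lemma --- it is quoted from \cite{GM2} without proof --- but your derivation is the standard one and each step rests only on facts the paper has already recorded: the inclusion $[\alpha,\beta]\subseteq\alpha\cap\beta$, commutativity, monotonicity and join-distributivity of the commutator, and the identity $[\theta,\nabla_A]=\theta$, which follows for a semidegenerate variety by combining Proposition 2.2 with the equivalences of Proposition 2.1. In particular the two-step use of part (2) to get the base case $[\alpha,\alpha]\lor[\beta,\beta]=\nabla_A$ and the induction matching the convention $[\alpha,\alpha]^{n+1}=[[\alpha,\alpha]^n,[\alpha,\alpha]^n]$ are both handled correctly, so nothing is missing.
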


For all congruences $\alpha, \beta \in Con(A)$ and for any integer $n\geq 1$ we define by induction the congruence $[\alpha,\beta]^n$: $[\alpha,\beta]^1$ = $[\alpha,\beta]$ and $[\alpha,\beta]^{n+1} =[[\alpha,\beta]^n,[\alpha,\beta]^n]$.

Following \cite{Fresee}, p.82 or \cite{Agliano}, p. 582, a congruence $\phi\in Con(A)- \{\nabla_A \}$  is ${\emph{prime}}$ if for all $\alpha, \beta \in Con(A)$, $[\alpha,\beta] \subseteq \phi$ implies $\alpha \subseteq \phi$ or $\beta \subseteq \phi$. It is easy to see that $\phi\in Con(A)- \{\nabla_A \}$  is a prime congruence if and only if for all $\alpha, \beta \in K(A)$, $[\alpha,\beta] \subseteq \phi$ implies $\alpha \subseteq \phi$ or $\beta \subseteq \phi$. Let us introduce the following notations: $Spec(A)$ is the set of prime congruences and $Max(A)$ is the set of maximal elements of $Con(A)$. If $\theta \in Con(A)- \{\nabla_A \}$  then there exists $\phi \in Max(A)$ such that $\theta \subseteq \phi$. By \cite{Agliano}, the following inclusion $Max(A) \subseteq Spec(A)$ holds.

According to \cite{Agliano}, p.582, the {\emph{radical}} $\rho(\theta)=\rho_A(\theta)$ of a congruence $\theta \in A$ is defined by $\rho_A(\theta)=\bigwedge \{\phi\in Spec(A)|\theta \subseteq \phi\}$; if $\theta=\rho(\theta)$ then $\theta$ is a radical congruence. We shall denote by $RCon(A)$ the set of radical congruences of $A$. The algebra $A$ is {\emph{semiprime}} if $\rho(\Delta_A)=\Delta_A$.

\begin{lema}
\cite{Agliano},\cite{GM2}
For all congruences $\alpha, \beta \in Con(A)$ the following hold:
\usecounter{nr}
\begin{list}{(\arabic{nr})}{\usecounter{nr}}
\item $\alpha \subseteq \rho(\alpha)$;
\item $\rho(\alpha \land \beta)=\rho ([\alpha,\beta])=\rho(\alpha) \land \rho(\beta)$;
\item $\rho(\alpha)= \nabla_A$ iff $\alpha = \nabla_A$;
\item $\rho(\alpha \lor \beta)=\rho(\rho(\alpha) \lor \rho(\beta))$;
\item $\rho(\rho(\alpha))=\rho(\alpha)$;
\item $\rho(\alpha) \lor \rho(\beta) = \nabla_A$ iff $\alpha \lor \beta = \nabla_A$;
\item $\rho([\alpha,\alpha]^n)=\rho(\alpha)$, for all integer $n \geq 0$.
\end{list}
\end{lema}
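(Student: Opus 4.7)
The plan is to treat $\rho$ as a closure operator induced by the family $\mathrm{Spec}(A)$, and derive each clause from elementary manipulations of the intersection that defines it, plus the primeness criterion and the semidegeneracy of $\mathcal V$.

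First I would dispatch the routine closure-style items. Item (1) is immediate from the definition $\rho(\alpha)=\bigwedge\{\phi\in\mathrm{Spec}(A)\mid \alpha\subseteq\phi\}$, since every $\phi$ in the meet contains $\alpha$. For item (3), one direction is by convention (no prime congruence equals $\nabla_A$, so $\rho(\nabla_A)=\bigwedge\emptyset=\nabla_A$); the other direction uses semidegeneracy of $\mathcal V$ through the fact that $\nabla_A$ is compact, hence every proper congruence lies under a maximal one, and maximals are prime by the inclusion $\mathrm{Max}(A)\subseteq\mathrm{Spec}(A)$ recorded just before the statement. Item (5) follows once one observes that $\{\phi\in\mathrm{Spec}(A)\mid\rho(\alpha)\subseteq\phi\}=\{\phi\in\mathrm{Spec}(A)\mid\alpha\subseteq\phi\}$: the inclusion $\subseteq$ is clear from (1), and $\supseteq$ holds because $\rho(\alpha)$ is the meet of such $\phi$'s.

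The main content is item (2), where I would use the primeness criterion in both directions. From $[\alpha,\beta]\subseteq\alpha\land\beta\subseteq\alpha,\beta$ and monotonicity of $\rho$ we get $\rho([\alpha,\beta])\subseteq\rho(\alpha\land\beta)\subseteq\rho(\alpha)\land\rho(\beta)$. For the reverse chain I would take an arbitrary $\phi\in\mathrm{Spec}(A)$ with $[\alpha,\beta]\subseteq\phi$; primeness gives $\alpha\subseteq\phi$ or $\beta\subseteq\phi$, so in either case $\rho(\alpha)\land\rho(\beta)\subseteq\phi$. Taking the meet over all such $\phi$ yields $\rho(\alpha)\land\rho(\beta)\subseteq\rho([\alpha,\beta])$, closing the loop.

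Item (4) is the same set-inclusion trick: any prime containing $\alpha\vee\beta$ contains both $\alpha$ and $\beta$, hence both $\rho(\alpha)$ and $\rho(\beta)$, hence their join, so the family of primes above $\rho(\alpha)\vee\rho(\beta)$ equals the family above $\alpha\vee\beta$, and the two radicals coincide. Item (6) is then a two-line consequence: combining (3) and (4) gives $\rho(\alpha)\vee\rho(\beta)=\nabla_A\iff\rho(\rho(\alpha)\vee\rho(\beta))=\nabla_A\iff\rho(\alpha\vee\beta)=\nabla_A\iff\alpha\vee\beta=\nabla_A$. Finally, I would prove (7) by induction on $n$: the case $n=1$ is $\rho([\alpha,\alpha])=\rho(\alpha)\land\rho(\alpha)=\rho(\alpha)$ by (2), and the inductive step applies this base case to the congruence $[\alpha,\alpha]^n$ using the recursive definition $[\alpha,\alpha]^{n+1}=[[\alpha,\alpha]^n,[\alpha,\alpha]^n]$.

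The only step that is not a formal manipulation is the non-triviality part of (3), which silently invokes the semidegeneracy hypothesis to guarantee the existence of a maximal congruence above any proper one; the rest is book-keeping around the definition of $\rho$ and the primeness of $\phi$.
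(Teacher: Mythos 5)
Your proof is correct; the paper itself states this lemma without proof, citing Agliano and Georgescu--Mure\c{s}an, and your direct verification from the definition $\rho(\alpha)=\bigwedge\{\phi\in Spec(A)\mid\alpha\subseteq\phi\}$ (using primeness for (2), the chain $Max(A)\subseteq Spec(A)$ plus semidegeneracy for the nontrivial direction of (3), and the ``same family of primes'' observation for (4) and (5)) is exactly the standard argument behind the cited results. The only cosmetic point is that the $n=0$ case of (7) needs the convention $[\alpha,\alpha]^0=\alpha$, under which it is trivial.
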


The equality $(4)$ from the previous lemma can be extended to arbitrary joins: for any family $(\alpha_i)_{i\in I} \subseteq A$, we have $\rho(\displaystyle \bigvee_{i \in I}\alpha_i)=\rho(\bigvee_{i \in I} \rho(\alpha_i))$. A new join operation can be introduced in $RCon(A)$: if $(\alpha_i)_{i\in I} \subseteq RCon(A)$ then we define $\displaystyle \bigvee_{i \in I}^{\cdot} \alpha_i=\rho(\bigvee_{i \in I}\alpha_i)$. An easy computation shows that $(RCon(A), \displaystyle \bigvee^{\cdot}, \wedge, \rho(\Delta_A), \nabla_A)$ is a frame (\cite{Johnstone} is the fundamental text for the frame theory). The frame $RCon(A)$ plays an important role in universal algebra: numerous problems on congruences of the algebra $A$ can be solved by using only the abstract frame structure of $RCon(A)$ (see \cite{Agliano}, \cite{Birkhoff}, \cite{Fresee}, \cite{GM2}, \cite{GKM}).

In what follows we shall identify the variety $\mathcal{V}$ with the category whose objects are algebras in $\mathcal{V}$ and the morphisms are the usual $\tau$ - homomorphisms (recall that $\tau$ is the signature of algebras in $\mathcal{V}$).

Let $u:A\rightarrow B$ be an arbitrary morphism in $\mathcal{V}$ and $u^*:Con(B)\rightarrow Con(A)$, $u^{\bullet}:Con(A)\rightarrow Con(B)$ are the maps defined by $u^*(\beta) = u^{-1}(\beta)$ and $u^{\bullet}(\alpha) = Cg_B(f(\alpha))$, for all $\alpha\in Con(A)$ and $\beta\in Con(B)$. Thus $u^{\bullet}$ is the left adjoint of $u^*$: for all $\alpha\in Con(A)$, $\beta\in Con(B)$, we have $u^{\bullet}(\alpha)\subseteq \beta$ iff  $\alpha\subseteq u^*(\beta)$.

By \cite{GKM}, if $\alpha\in K(A)$ then $u^{\bullet}(\alpha)\in K(B)$, so we can consider the restriction $u^{\bullet}|_{K(A)}:K(A)\rightarrow K(B)$.

We remark that the study of some themes in universal algebra needs to restrict the class of morphisms. For example, the investigation of functorial properties of reticulation imposed the class of admissible morphisms of the variety  $\mathcal{V}$ (see \cite{GKM}). According to \cite{GM1}, the morphism $u:A\rightarrow B$ of $\mathcal{V}$ is said to be admissible if $u^*(\psi)\in Spec(A)$, for any $\psi\in Spec(B)$. By Proposition 3.6 of \cite{GM1}, any surjective morphism of $\mathcal{V}$ is admissible.

Now we shall recall from \cite{Agliano} some notions regarding the topological structure of the prime spectrum of the algebra $A\in\mathcal{V}$.

For any congruence $\theta$ of $A$ we denote $V_A(\theta) = V(\theta) = \{\phi \in Spec(A)|\theta\subseteq \phi\}$ and $D_A(\theta) = D(\theta) = Spec(A)- V(\theta)$. If $\alpha, \beta \in Con(A)$ then $D(\alpha)\bigcap D(\beta) = D([\alpha,\beta])$ and $V(\alpha)\bigcup V(\beta) = V([\alpha,\beta])$. For any family of congruences $(\theta_i)_{i\in I}$, the following equalities hold: $\bigcup_{i\in I}D(\theta_i) = D(\bigvee_{i\in I}\theta_i)$ and $\bigcap_{i\in I}V(\theta_i) = V(\bigvee_{i\in I}\theta_i)$. Thus $Spec(A)$ becomes a topological space whose open sets are $D(\theta),\theta\in Con(A)$. We remark that this topology extends the Zariski topology (defined on the prime spectra of commutative rings) and the Stone topology (defined on the prime spectra of bounded distributive lattices). Thus this topology on the prime spectrum $Spec(A)$ of the algebra $A$ will be named Zariski topology and the respective topological space will be denoted by $Spec_Z(A)$. We mention that the family $(D(\alpha))_{\alpha\in K(A)}$ is a basis of open sets for the Zariski topology. We shall denote by $Spec_Z(A)$ the maximal spectrum $Max(A)$ endowed with the restriction of the topology of $Spec_Z(A)$.

The paper \cite{Agliano} contains a lot of results regarding the topological spaces $Spec_Z(A)$ and $Spec_Z(A)$.

\section{Reticulation of a semidegenerate congruence-modular algebra}

\hspace{0.5cm} Let us fix a semidegenerate congruence - modular variety $\mathcal{V}$ and $A$ an algebra of $\mathcal{V}$. Recall that the reticulation of an algebra $A\in \mathcal{V}$ is a bounded distributive lattice $L(A)$ such that the prime spectra $Spec_Z(A)$ and $Spec_{Id,Z}(L(A))$ are homeomorphic. By generalizing an idea of Belluce \cite{Belluce}, for each algebra $A\in \mathcal{V}$ we shall define a bounded distributive lattice $L(A)$. In general, the prime spectra $Spec_Z(A)$ and $Spec_{Id,Z}(L(A))$ are not homeomorphic. In this section we shall introduce the quasi-algebras in the variety $\mathcal{V}$, a class of algebras that fulfills the mentioned property.

Denote by $C(A)$ the subset of $Con(A)$ defined by the following rules:

$\bullet$ $K(A)\subseteq C(A)$;

$\bullet$ If $\theta,\chi\in C(A)$ then $\theta\lor \chi\in C(A)$;

$\bullet$ If $\theta,\chi\in C(A)$ then $[\theta, \chi]\in C(A)$.

\begin{remarca} If $A$ is a ring then $C(A)$ is exactly the commutative semi-ring $Sem(A)$, generated by the principal ideals of $A$, under the commutator operation and the sum (cf. \cite{Belluce}, p. 1856 or \cite{Belluce0}, p. 1515). In the general case of an algebra $A\in \mathcal{V}$, the algebraic structure $(C(A),\lor,[\cdot,\cdot], \Delta_A, \nabla_A)$ is similar to a semi-ring, but without the associativity of multiplication $[\cdot,\cdot]$.
\end{remarca}

Consider the following equivalence relation on $Con(A)$: for all $\alpha, \beta\in Con(A)$, $\alpha\equiv \beta$ if and only if $\rho(\alpha) = \rho(\beta)$. Let $\hat \alpha$ be the equivalence class of $\alpha\in Con(A)$ and $0 = \hat{\Delta_A}, 1 = \hat{\nabla_A}$. Then $\equiv$ is a congruence (on the lattice $Con(A)$) w.r.t. the join and the commutator operations: for all $\alpha,\beta\in Con(A)$, $\alpha\equiv \alpha'$ and $\beta\equiv \beta'$ implies $\alpha\lor\beta\equiv \alpha'\lor\beta' $ and  $[\alpha,\beta]\equiv [\alpha',\beta']$. Then the quotient set $L(A)$ = $C(A)/{\equiv}$ is a bounded distributive lattice. We shall denote by $\lambda_A:C(A)\rightarrow L(A)$ the function defined by $\lambda_A(\alpha) = \hat{\alpha}$, for all $\alpha\in C(A)$.

We remark that for all $\alpha,\beta\in C(A)$ we have $\lambda_A(\alpha) = \lambda_A(\beta)$ if and only if $\rho(\alpha) = \rho(\beta)$.

Our definition of the lattice $L(A)$ generalizes the construction given in \cite{Belluce} for a ring $R$; in this particular case, $C(A)$ is exactly the semiring $Sem(R)$ considered in the mentioned paper.

\begin{lema}
 For all congruences $\alpha, \beta \in C(A)$ the following hold:
\usecounter{nr}
\begin{list}{(\arabic{nr})}{\usecounter{nr}}
\item $\lambda_A(\alpha \lor \beta) = \lambda_A(\alpha)\lor \lambda_A(\beta)$;
\item $\lambda_A([\alpha,\beta])$ = $\lambda_A(\alpha)\land \lambda_A(\beta)$;
\item $\lambda_A(\alpha) = 1$ iff $\alpha = \nabla_A$;
\item If $\alpha\subseteq \beta$ then $\lambda_A(\alpha)\leq \lambda_A(\beta)$;
\item If $A$ is semiprime then $\lambda_A(\alpha) = 0$ iff $\alpha = \Delta_A$;
\item  $\lambda_A(\alpha)\leq \lambda_A(\beta)$ iff $\rho(\alpha)\subseteq\rho(\beta)$ iff for all $\phi\in Spec(A)$, $\beta\subseteq \phi$ implies $\alpha\subseteq \phi$.
\end{list}
\end{lema}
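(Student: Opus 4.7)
The plan is to dispatch the six items in sequence, relying on three tools: (i) the authors' observation that $\equiv$ is a congruence on $(Con(A),\lor,[\cdot,\cdot])$, so the quotient operations on $L(A)$ are induced by $\lor$ and $[\cdot,\cdot]$; (ii) Lemma 2.4 (the radical calculus); and (iii) the definition $\rho(\theta)=\bigwedge\{\phi\in Spec(A)\mid \theta\subseteq\phi\}$, together with the elementary fact that every prime congruence is radical.

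For (1) and (2): since $\equiv$ is compatible with $\lor$ and $[\cdot,\cdot]$, the join in $L(A)$ is $\hat\alpha\lor\hat\beta=\widehat{\alpha\lor\beta}$. The meet in $L(A)$ is $\hat\alpha\land\hat\beta=\widehat{[\alpha,\beta]}$: indeed by Lemma 2.4(2), $\rho([\alpha,\beta])=\rho(\alpha)\land\rho(\beta)=\rho(\alpha\land\beta)$, so $[\alpha,\beta]\equiv\alpha\land\beta$ and one checks the absorption laws using $[\alpha,\beta]\subseteq\alpha\land\beta$ together with Lemma 2.4(7). Items (1) and (2) are then immediate restatements. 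For (3): $\lambda_A(\alpha)=1=\hat{\nabla_A}$ means $\rho(\alpha)=\rho(\nabla_A)=\nabla_A$, and Lemma 2.4(3) forces $\alpha=\nabla_A$; the converse is trivial. For (4): $\alpha\subseteq\beta$ gives $\alpha\lor\beta=\beta$, so (1) yields $\lambda_A(\alpha)\lor\lambda_A(\beta)=\lambda_A(\beta)$, that is, $\lambda_A(\alpha)\leq\lambda_A(\beta)$. For (5): under semiprimeness $\rho(\Delta_A)=\Delta_A$, so $\lambda_A(\alpha)=0$ gives $\rho(\alpha)=\Delta_A$, and Lemma 2.4(1) ($\alpha\subseteq\rho(\alpha)$) then forces $\alpha=\Delta_A$.

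For (6), I would prove the two equivalences separately. The first, $\lambda_A(\alpha)\leq\lambda_A(\beta)\Longleftrightarrow\rho(\alpha)\subseteq\rho(\beta)$, is obtained as follows: $\lambda_A(\alpha)\leq\lambda_A(\beta)$ iff $\lambda_A(\alpha\lor\beta)=\lambda_A(\beta)$ iff $\rho(\alpha\lor\beta)=\rho(\beta)$; using Lemma 2.4(4) this equals $\rho(\rho(\alpha)\lor\rho(\beta))=\rho(\beta)$, and applying Lemma 2.4(5) shows this is equivalent to $\rho(\alpha)\subseteq\rho(\beta)$ (the $\Leftarrow$ direction uses that $\rho(\alpha)\subseteq\rho(\beta)$ collapses the outer join to $\rho(\beta)$; the $\Rightarrow$ direction uses $\alpha\subseteq\alpha\lor\beta$ hence $\rho(\alpha)\subseteq\rho(\alpha\lor\beta)=\rho(\beta)$). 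The second equivalence uses the spectral description of the radical: because every prime congruence $\phi$ satisfies $\rho(\phi)=\phi$ (it lies in $V(\phi)$, and the intersection defining $\rho(\phi)$ is contained in $\phi$), we have $\alpha\subseteq\phi$ iff $\rho(\alpha)\subseteq\phi$, hence $V(\alpha)=V(\rho(\alpha))$; then $\rho(\alpha)\subseteq\rho(\beta)=\bigcap V(\beta)$ is equivalent to $\rho(\alpha)\subseteq\phi$ for every $\phi\in V(\beta)$, i.e., to $V(\beta)\subseteq V(\alpha)$.

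The routine parts are (1)--(5), which are essentially unpacking of the definition of $L(A)$ and direct quotes of Lemma 2.4. The main obstacle is the first equivalence in (6): one must carefully handle the outer $\rho$ produced by Lemma 2.4(4) in the expression $\rho(\alpha\lor\beta)=\rho(\rho(\alpha)\lor\rho(\beta))$, and argue that this equality with $\rho(\beta)$ is equivalent to the set-theoretic inclusion $\rho(\alpha)\subseteq\rho(\beta)$ in $Con(A)$, without implicitly assuming that the join of two radical congruences is radical (which is precisely why the $\bigvee^{\cdot}$ on $RCon(A)$ was introduced).
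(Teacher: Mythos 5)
Your proof is correct: each item follows exactly as you say from the definition of $L(A)=C(A)/{\equiv}$ (with $\lor$ and $[\cdot,\cdot]$ inducing join and meet), the radical calculus of Lemma 2.4, and the fact that prime congruences are radical, and your care with the outer $\rho$ in $\rho(\alpha\lor\beta)=\rho(\rho(\alpha)\lor\rho(\beta))$ is exactly what is needed for item (6). The paper states this lemma without proof, treating it as a routine consequence of those same ingredients, so your argument supplies precisely the omitted verification.
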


\begin{propozitie} Let $(\theta_j)_{j\in J}$ be a family of congruences in $C(A)$ such that $\bigvee_{j\in J}\theta_j \in C(A)$. Thus $\lambda_A(\bigvee_{j\in J}\theta_j)= \bigvee_{j\in J}\lambda_A(\theta_j)$.
\end{propozitie}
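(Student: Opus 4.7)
\bigskip

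\noindent\textbf{Proof proposal.} Write $\theta := \bigvee_{j\in J}\theta_j$ and note that $\theta\in C(A)$ by hypothesis, so $\lambda_A(\theta)$ is defined. The plan is simply to verify that $\lambda_A(\theta)$ is the least upper bound of $\{\lambda_A(\theta_j)\mid j\in J\}$ in the lattice $L(A)$; since binary joins in $L(A)$ are already handled by Lemma~3.2(1), the real content is only the arbitrary-indexed case, and everything is controlled by the prime-spectrum characterisation in Lemma~3.2(6).

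\medskip

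\noindent\emph{Upper bound.} For each $j\in J$ we have $\theta_j\subseteq \theta$, so Lemma~3.2(4) yields $\lambda_A(\theta_j)\leq \lambda_A(\theta)$. Hence $\lambda_A(\theta)$ is an upper bound of the family $\{\lambda_A(\theta_j)\}_{j\in J}$ in $L(A)$.

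\medskip

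\noindent\emph{Least upper bound.} Suppose $\gamma\in C(A)$ is such that $\lambda_A(\theta_j)\leq \lambda_A(\gamma)$ for every $j\in J$. By Lemma~3.2(6), this is equivalent to saying that, for every $j\in J$ and every $\phi\in Spec(A)$, the implication $\gamma\subseteq \phi\Rightarrow \theta_j\subseteq \phi$ holds. Fix now an arbitrary $\phi\in Spec(A)$ with $\gamma\subseteq \phi$. Then $\theta_j\subseteq \phi$ for all $j\in J$, hence $\theta=\bigvee_{j\in J}\theta_j\subseteq \phi$. Applying Lemma~3.2(6) once more, we obtain $\lambda_A(\theta)\leq \lambda_A(\gamma)$. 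This shows that $\lambda_A(\theta)$ is the least upper bound of $\{\lambda_A(\theta_j)\}_{j\in J}$ in $L(A)$, that is, $\lambda_A\bigl(\bigvee_{j\in J}\theta_j\bigr) = \bigvee_{j\in J}\lambda_A(\theta_j)$.

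\medskip

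\noindent\emph{Remarks on the difficulty.} There is essentially no obstacle here: the whole argument rests on the observation that a prime congruence contains a join iff it contains every summand, and on the translation between $\leq$ in $L(A)$ and prime-containment provided by Lemma~3.2(6). The only minor point worth noting is that the supremum $\bigvee_{j\in J}\lambda_A(\theta_j)$ in $L(A)$ need not exist a priori (since $L(A)$ is only a bounded distributive lattice); the above argument produces the supremum as a by-product, exhibiting $\lambda_A(\theta)$ as a witness.
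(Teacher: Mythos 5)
Your proof is correct and follows essentially the same route as the paper's: both reduce the least-upper-bound claim to the observation that a prime congruence contains $\bigvee_{j}\theta_j$ iff it contains every $\theta_j$, and translate back and forth via Lemma~3.2(6) (with the upper-bound direction handled by Lemma~3.2(4)). Your version is slightly more explicit about the upper-bound step and about why quantifying over upper bounds of the form $\lambda_A(\gamma)$, $\gamma\in C(A)$, suffices, but the substance is identical.
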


\begin{proof} Assume that $\theta\in C(A)$ and $\lambda_A(\theta_j)\leq \lambda_A(\theta)$, for all $j\in J$.  In order to obtain the desired equality, we have to prove that $\lambda_A(\bigvee_{j\in J}\theta_j)\leq \lambda_A(\theta)$.

Let $\phi$ be a prime congruence of $A$ such that $\theta\subseteq\phi$. According to Lemma 3.2(6), we have $\theta_j\subseteq \phi$, for all $j\in J$, therefore $\bigvee_{j\in J}\theta_j\leq \phi$. By using Lemma 3.2(4) we obtain $\lambda_A(\bigvee_{j\in J}\theta_j)\leq \lambda_A(\theta)$.
\end{proof}

Now we shall investigate (by means of the map $\lambda_A$) the connections between the congruences of algebra $A$ and the ideals of the lattice $L(A)$. Our main objective is to find a class of algebras $A$ in the variety $\mathcal{V}$ such that $Spec_Z(A)$ and $Spec_{Id,Z}(L(A))$ are homeomorphic spaces.

For all $\theta\in Con(A)$ and $I\in Id(L(A))$ we shall denote

$\theta^{\ast} = \{\lambda_A(\alpha)|\alpha\in K(A), \alpha\subseteq \theta \}$ and $I_{\ast} =\bigvee\{\alpha\in K(A)|\lambda_A(\alpha)\in I\}$.

Thus $\theta^{\ast}$ is an ideal of the lattice $L(A)$ and $I_{\ast}$ is a congruence of $A$. In this way one obtains two order - preserving functions $(\cdot)^{\ast}:Con(A)\rightarrow Id(L(A))$ and $(\cdot)_{\ast}:Id(L(A))\rightarrow Con(A)$. These two functions will play an important role in transferring some properties from congruences of $A$ to ideals of $L(A)$ and vice-versa.

\begin{lema} If $\theta\in C(A)$ then $\theta^{\ast}=(\lambda_A(\theta)]$, where the second member is the principal lattice ideal generated by $\{\lambda_A(\theta)\}$ in $L(A)$.
\end{lema}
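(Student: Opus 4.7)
The plan is to verify the set equality by proving the two inclusions separately, with the forward direction coming essentially for free from monotonicity and the reverse direction being the substantive one.

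For $\theta^{\ast} \subseteq (\lambda_A(\theta)]$, I would appeal directly to Lemma 3.2(4): any $\alpha \in K(A)$ with $\alpha \subseteq \theta$ satisfies $\lambda_A(\alpha) \leq \lambda_A(\theta)$, placing $\lambda_A(\alpha)$ in the principal ideal generated by $\lambda_A(\theta)$. This uses nothing about $\theta$ beyond it being a congruence.

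For the reverse inclusion $(\lambda_A(\theta)] \subseteq \theta^{\ast}$, I would pick $y \leq \lambda_A(\theta)$ and, using that every element of $L(A) = C(A)/{\equiv}$ is represented by some member of $C(A)$, write $y = \lambda_A(\beta)$ with $\beta \in C(A)$. Lemma 3.2(6) translates $y \leq \lambda_A(\theta)$ into $\rho(\beta) \subseteq \rho(\theta)$. The task then is to manufacture a compact congruence $\alpha \subseteq \theta$ with $\rho(\alpha) = \rho(\beta)$, whence $\lambda_A(\alpha) = y \in \theta^{\ast}$. The natural candidate is $\alpha := [\beta,\theta]$, which lies in $C(A)$ (since $\beta, \theta \in C(A)$ and $C(A)$ is closed under commutators), is contained in $\theta$ because $[\beta,\theta] \subseteq \beta \cap \theta$, and has the correct radical by Lemma 2.4(2): $\rho([\beta,\theta]) = \rho(\beta) \land \rho(\theta) = \rho(\beta)$, the last equality using $\rho(\beta) \subseteq \rho(\theta)$. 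Hence $\lambda_A([\beta,\theta]) = y$.

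The main obstacle I anticipate is that the witness $[\beta,\theta]$ produced this way sits in $C(A)$ but not a priori in $K(A)$, so it must be replaced by a genuinely compact representative of the same $\equiv$-class. To settle this, I would argue by structural induction on $\theta \in C(A)$: in the base case $\theta \in K(A)$ the compactness of $\theta$ itself is available; for $\theta = \theta_1 \lor \theta_2$ the witness is built by combining the inductively produced compact congruences via the finite join (which stays in $K(A)$); and for $\theta = [\theta_1,\theta_2]$ one uses that $y \leq \lambda_A(\theta_i)$ for each $i$ to extract, via the inductive hypothesis, compact $\alpha_i \subseteq \theta_i$ with $\lambda_A(\alpha_i) = y$, then forms the combined witness inside $\theta$ and invokes Lemma 3.2(1)–(2) to verify the image in $L(A)$ is still $y$. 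The commutator step is the delicate one, and it is precisely there that the specific structure $\theta \in C(A)$ (as opposed to an arbitrary congruence of $A$) is exploited.
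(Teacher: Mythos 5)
Your inclusion $\theta^{\ast}\subseteq(\lambda_A(\theta)]$ is fine and is exactly the paper's argument. The reverse inclusion, however, contains a genuine gap. Reading the definition $\theta^{\ast}=\{\lambda_A(\alpha)\mid \alpha\in K(A),\ \alpha\subseteq\theta\}$ literally, you must manufacture a \emph{compact} representative of $y$ below $\theta$, and the structural induction you propose cannot deliver one. Already in the base case $\theta\in K(A)$ the compactness of $\theta$ is of no help: the representative $\beta\in C(A)$ of $y$ is arbitrary and your candidate $[\beta,\theta]$ lies only in $C(A)$. In the commutator step the same failure recurs in pure form: the combined witness $[\alpha_1,\alpha_2]$ built from compact $\alpha_i\subseteq\theta_i$ is a commutator of compact congruences, and such a commutator need not be compact --- this is precisely the failure of hypothesis $(H)$ that motivates the whole paper. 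Worse, the statement you are trying to prove is not provable at this level of generality: applied to $\theta=[\alpha,\beta]$ with $\alpha,\beta\in K(A)$ and $y=\lambda_A(\theta)$, the claim ``there is a compact $\gamma\subseteq\theta$ with $\lambda_A(\gamma)=y$'' is literally the quasi-commutativity condition of Definition 3.19, which is not a hypothesis of this lemma and fails for some algebras (e.g.\ Belluce's non-quasi-commutative rings).

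The paper sidesteps the issue by tacitly letting the representatives in $\theta^{\ast}$ range over $C(A)$ rather than $K(A)$: its proof of this lemma begins ``$x=\lambda_A(\alpha)$ for some $\alpha\in C(A)$ with $\alpha\subseteq\theta$'', and the same reading is used in Lemmas 3.6, 3.10 and 4.6 and is what makes $\theta^{\ast}$ downward closed, hence an ideal. Under that reading the reverse inclusion is immediate: $\lambda_A(\theta)\in\theta^{\ast}$ since $\theta\in C(A)$ and $\theta\subseteq\theta$, so the ideal $\theta^{\ast}$ contains $(\lambda_A(\theta)]$; concretely, for $y=\lambda_A(\beta)\le\lambda_A(\theta)$ the witness is your own $[\beta,\theta]\in C(A)$, with $[\beta,\theta]\subseteq\theta$ and $\rho([\beta,\theta])=\rho(\beta)\wedge\rho(\theta)=\rho(\beta)$. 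In short, your computation is the right one; it is the insistence on a witness in $K(A)$ that cannot be met.
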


\begin{proof}
From $\lambda_A(\theta)\in \theta^{\ast}$ we obtain $(\lambda_A(\theta)]\subseteq \theta^{\ast}$. In order to prove the converse inclusion $\theta^{\ast}\subseteq (\lambda_A(\theta)]$ consider an element $x\in \theta^{\ast}$, so $x=\lambda_A(\alpha)$ for some $\alpha\in C(A)$  such that $\alpha\subseteq \theta$. Thus $x=\lambda_A(\alpha)\leq \lambda_A(\theta)$, so $x\in (\lambda_A(\theta)]$.
\end{proof}

\begin{lema} For all $\alpha\in K(A)$ and $I\in Id(L(A))$, $\alpha\subseteq I_{\ast}$ if and only if $\lambda_A(\alpha)\in I$.
\end{lema}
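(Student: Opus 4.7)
The plan is to prove the two directions separately, with the reverse direction being essentially immediate from the definition of $I_\ast$, while the forward direction will hinge on the compactness of $\alpha$ in $Con(A)$.

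For the easy direction, suppose $\lambda_A(\alpha) \in I$. Since $\alpha \in K(A)$, the congruence $\alpha$ is one of the terms in the family $\{\beta \in K(A) \mid \lambda_A(\beta) \in I\}$ whose join defines $I_\ast$. Hence $\alpha \subseteq I_\ast$, directly from the definition.

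For the forward direction, assume $\alpha \subseteq I_\ast = \bigvee\{\beta \in K(A) \mid \lambda_A(\beta) \in I\}$. Here I would invoke the compactness of $\alpha$ in the algebraic lattice $Con(A)$: there exist finitely many $\beta_1, \ldots, \beta_n \in K(A)$ with $\lambda_A(\beta_i) \in I$ for each $i$, such that $\alpha \subseteq \beta_1 \vee \cdots \vee \beta_n$. Applying Lemma 3.2(4) gives $\lambda_A(\alpha) \leq \lambda_A(\beta_1 \vee \cdots \vee \beta_n)$, and then Lemma 3.2(1) (iterated) converts the right-hand side into $\lambda_A(\beta_1) \vee \cdots \vee \lambda_A(\beta_n)$. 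Since each $\lambda_A(\beta_i)$ lies in $I$ and $I$ is an ideal of the lattice $L(A)$, this finite join also lies in $I$; therefore $\lambda_A(\alpha) \in I$.

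The main obstacle, if any, is ensuring that the compactness argument is legitimate; this relies on the fact recalled in the preliminaries that $Con(A)$ is an algebraic lattice whose compact elements are precisely the finitely generated congruences, and that $K(A)$ is closed under finite joins (so the finite subjoin lives in $K(A)$, making Lemma 3.2(1) applicable without leaving $C(A)$). Everything else is a routine unpacking of the definitions of $I_\ast$ and $\lambda_A$.
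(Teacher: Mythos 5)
Your proof is correct and follows essentially the same route as the paper's: the reverse direction is read off from the definition of $I_{\ast}$, and the forward direction uses compactness of $\alpha$ to extract a finite subjoin $\beta_1\lor\cdots\lor\beta_n$ and then Lemma 3.2(1),(4) together with the fact that $I$ is an ideal. Your extra remark that $K(A)$ is closed under finite joins (so the argument stays inside $C(A)$) is a detail the paper leaves implicit, but the substance is identical.
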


\begin{proof} Assume that $\alpha\subseteq I_{\ast}=\bigvee\{\alpha\in K(A)|\lambda_A(\alpha)\in I\}$ so there exist an integer $\geq 1$ and $\beta_1,\cdots,\beta_n\in K(A)$ such that $\alpha\subseteq \beta_1\lor...\lor \beta_n$ and $\lambda_A(\beta_i)\in I$, for all $i=1,\cdots,n$ (because $\alpha$ is a compact congruence). By applying Lemma 3.2, (1) and (4) we get $\lambda_A(\alpha)\leq \lambda_A(\beta)\lor...\lor\lambda_A(\beta_n)\in I$, so $\lambda_A(\alpha)\in I$ (because $I$ is an ideal of $L(A)$). The converse implication is obvious.
\end{proof}

\begin{lema} If $\theta\in Con(A)$ and $I\in Id(L(A))$ then $\theta\subseteq (\theta^{\ast})_{\ast}$ and $I\subseteq(I_{\ast})^{\ast}$.
\end{lema}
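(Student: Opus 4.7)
The plan is to prove the two inclusions separately. The first is a direct consequence of the algebraicity of $Con(A)$ together with Lemma 3.5; the second is a formal consequence of the first together with Lemma 3.4 and the downward closure of lattice ideals.

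For $\theta\subseteq(\theta^{\ast})_{\ast}$, I would use that $Con(A)$ is algebraic, so $\theta=\bigvee\{\alpha\in K(A)\mid\alpha\subseteq\theta\}$. For each such compact $\alpha\subseteq\theta$, the definition of $\theta^{\ast}$ immediately gives $\lambda_A(\alpha)\in\theta^{\ast}$. Applying Lemma 3.5 to the lattice ideal $I=\theta^{\ast}$ then yields $\alpha\subseteq(\theta^{\ast})_{\ast}$. Since $(\theta^{\ast})_{\ast}$ is a congruence, joining over all such compact $\alpha$ produces $\theta\subseteq(\theta^{\ast})_{\ast}$, as required.

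For $I\subseteq(I_{\ast})^{\ast}$, I would pick an arbitrary $x\in I$ and write $x=\lambda_A(\theta)$ for some $\theta\in C(A)$ (possible because $\lambda_A\colon C(A)\to L(A)$ is surjective). By Lemma 3.4, $\theta^{\ast}=(\lambda_A(\theta)]=(x]$; since $x\in I$ and the lattice ideal $I$ is downward closed, this gives $\theta^{\ast}\subseteq I$. Applying the order-preserving map $(\cdot)_{\ast}$ we obtain $(\theta^{\ast})_{\ast}\subseteq I_{\ast}$, and chaining with the first inclusion already proved yields $\theta\subseteq(\theta^{\ast})_{\ast}\subseteq I_{\ast}$. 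Hence $\theta\in C(A)$ and $\theta\subseteq I_{\ast}$, which by the definition of $(\cdot)^{\ast}$ means precisely that $x=\lambda_A(\theta)\in(I_{\ast})^{\ast}$.

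I do not expect any real obstacle: the whole statement is an assembly of Lemmas 3.4 and 3.5 together with the fact that $Con(A)$ is an algebraic lattice. The only point to be attentive to is that, in Part 2, the witness of $x\in(I_{\ast})^{\ast}$ is the element $\theta\in C(A)$ itself rather than some compact subcongruence of $I_{\ast}$; this is consistent with the proof of Lemma 3.4, which likewise produces an element of $C(A)$ as the representative.
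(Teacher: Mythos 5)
Your proof is correct and takes essentially the same route as the paper's: the first inclusion is the same compactness argument, and for the second the paper likewise writes $x=\lambda_A(\varepsilon)$ with $\varepsilon\in C(A)$, shows $\varepsilon\subseteq I_{\ast}$ (by checking each compact $\alpha\subseteq\varepsilon$ directly via Lemma 3.5 rather than via Lemma 3.4 and monotonicity of $(\cdot)_{\ast}$), and concludes $x=\lambda_A(\varepsilon)\in(I_{\ast})^{\ast}$ with exactly the $C(A)$-witness you flag. The subtlety you note --- that the witness lies in $C(A)$ rather than $K(A)$ as in the displayed definition of $(\cdot)^{\ast}$ --- is equally present in the paper's own proof (and in its proofs of Lemmas 3.4, 3.9 and 3.10), so your argument is faithful to the paper's implicit convention.
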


\begin{proof} Remark that $(\theta^{\ast})_{\ast}= \bigvee\{\alpha\in K(A)|\lambda_A(\alpha)\in \theta^{\ast}\}$. We observe that $\alpha\in K(A)$ and $\alpha\subseteq \theta$ imply that $\lambda_A(\alpha)\in \theta^{\ast}$, so $\alpha\subseteq (\theta^{\ast})_{\ast}$. Then the inclusion $\theta\subseteq (\theta^{\ast})_{\ast}$ follows.

In order to prove that $I\subseteq(I_{\ast})^{\ast}$, assume that $x\in I$, so $x=\lambda_A(\varepsilon)$ for some $\varepsilon\in C(A)$. Let $\alpha$ be a compact congruence of $A$ such that $\alpha\subseteq \varepsilon$, hence $\lambda_A(\alpha)\leq \lambda_A(\varepsilon)$. Thus $\lambda_A(\alpha)\in I$, hence, by using Lemma 3.5, one obtains $\alpha\subseteq I_{\ast}$. It follows that $\varepsilon\subseteq I_{\ast}$, so $x=\lambda_A(\varepsilon)\in (I_{\ast})^{\ast}$. We conclude that $I\subseteq(I_{\ast})^{\ast}$.

\end{proof}

\begin{lema}If $\phi\in Spec(A)$ then $(\phi^{\ast})_{\ast}=\phi$.
\end{lema}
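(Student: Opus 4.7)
The goal is to prove $(\phi^{\ast})_{\ast} = \phi$ for $\phi \in Spec(A)$. One inclusion is free: Lemma 3.6 already gives $\phi \subseteq (\phi^{\ast})_{\ast}$ for arbitrary congruences, without using primeness. So the entire content of the lemma is in the reverse inclusion $(\phi^{\ast})_{\ast} \subseteq \phi$, and this is where the hypothesis that $\phi$ is prime must be used.

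My plan is to unfold the definition of $(\phi^{\ast})_{\ast}$ and reduce to checking the inclusion on compact congruences. Since $(\phi^{\ast})_{\ast}=\bigvee\{\alpha\in K(A)\mid \lambda_A(\alpha)\in\phi^{\ast}\}$, it suffices to show that every such $\alpha\in K(A)$ already lies in $\phi$. Fix such an $\alpha$. Because $\phi^{\ast}$ is an ideal of $L(A)$ and consists of elements of the form $\lambda_A(\beta)$ with $\beta\in K(A)$ and $\beta\subseteq\phi$, and because $K(A)$ is closed under finite joins, the membership $\lambda_A(\alpha)\in\phi^{\ast}$ means there exists $\beta\in K(A)$ with $\beta\subseteq\phi$ and $\lambda_A(\alpha)\leq\lambda_A(\beta)$ (a finite join of generators of $\phi^{\ast}$ is still $\lambda_A$ of an element of $K(A)$ contained in $\phi$).

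The crucial step is now to invoke Lemma 3.2(6): $\lambda_A(\alpha)\leq\lambda_A(\beta)$ is equivalent to the implication ``$\beta\subseteq\psi \Rightarrow \alpha\subseteq\psi$ for every $\psi\in Spec(A)$.'' Applying this to $\psi=\phi$, which is prime precisely so that it belongs to $Spec(A)$, and using $\beta\subseteq\phi$, we conclude $\alpha\subseteq\phi$. Taking the join over all such $\alpha$ yields $(\phi^{\ast})_{\ast}\subseteq\phi$, closing the proof.

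I do not expect any obstacle beyond bookkeeping. The only point requiring a moment of attention is the reduction from ``$\lambda_A(\alpha)\in\phi^{\ast}$'' to ``there is a single $\beta\in K(A)$ with $\beta\subseteq\phi$ and $\lambda_A(\alpha)\leq\lambda_A(\beta)$''; this uses both the ideal structure of $\phi^{\ast}$ (stated in the excerpt) and the closure of $K(A)$ under finite joins (noted in the preliminaries). After that, the primeness of $\phi$ enters in the cleanest possible way, through the spectral characterization of the order on $\lambda_A(K(A))$ in Lemma 3.2(6).
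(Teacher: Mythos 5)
Your proof is correct and follows essentially the same route as the paper: both reduce to showing that a compact $\alpha$ with $\lambda_A(\alpha)\in\phi^{\ast}$ satisfies $\alpha\subseteq\phi$, produce a $\beta\in K(A)$ with $\beta\subseteq\phi$ dominating $\lambda_A(\alpha)$, and conclude via the spectral/radical characterization of the order on $L(A)$ (Lemma 3.2(6), which is exactly the paper's step ``$\rho(\alpha)=\rho(\beta)$, $\beta\subseteq\phi$, $\phi\in Spec(A)$ imply $\alpha\subseteq\phi$''). The easy inclusion is cited from Lemma 3.6 in both cases, so there is nothing to add.
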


\begin{proof} In order to prove that $(\phi^{\ast})_{\ast}\subseteq \phi$, let us consider a compact congruence $\alpha$ such that $\alpha\subseteq (\phi^{\ast})_{\ast}$, therefore $\lambda_A(\alpha)\in \phi^{\ast}$ (cf. Lemma 3.5). Thus $\lambda_A(\alpha)=\lambda_A(\beta)$ for some $\beta\in C(A)$ such that $\beta\subseteq\phi$. From $\rho(\alpha)=\rho(\beta)$, $\beta\subseteq\phi$ and $\phi\in Spec(A)$ it follows that $\alpha\subseteq \phi$. Then the inclusion $(\phi^{\ast})_{\ast}\subseteq \phi$ is proven. The converse inclusion $\phi\subseteq (\phi^{\ast})_{\ast}$ is assured by Lemma 3.4.
\end{proof}

\begin{lema} An ideal $I$ of the lattice $L(A)$ is proper if and only if $I_{\ast}$ is a proper congruence.
\end{lema}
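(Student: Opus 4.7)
The statement compares two notions of properness: for the ideal $I\in Id(L(A))$, being proper means $1\notin I$, i.e.\ $\lambda_A(\nabla_A)\notin I$ (using Lemma 3.2(3) to identify $1$ with $\lambda_A(\nabla_A)$); for the congruence $I_{\ast}\in Con(A)$, being proper means $I_{\ast}\neq \nabla_A$, i.e.\ $\nabla_A\not\subseteq I_{\ast}$. So the entire claim reduces to a single biconditional: $\lambda_A(\nabla_A)\in I$ if and only if $\nabla_A\subseteq I_{\ast}$. My plan is to derive both implications from this bridge.

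The key observation I would invoke is that the variety $\mathcal{V}$ is semidegenerate, which (by the characterization recalled in Section 2) is equivalent to $\nabla_A$ being a compact congruence of every $A\in\mathcal{V}$. Hence $\nabla_A\in K(A)$, and Lemma 3.5 applies with the choice $\alpha=\nabla_A$: it states exactly that $\nabla_A\subseteq I_{\ast}$ iff $\lambda_A(\nabla_A)\in I$.

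With this bridge in hand, both directions are immediate. If $I$ is not proper then $I=L(A)$, so $\lambda_A(\nabla_A)=1\in I$, and Lemma 3.5 gives $\nabla_A\subseteq I_{\ast}$, forcing $I_{\ast}=\nabla_A$. Conversely, if $I_{\ast}=\nabla_A$, then certainly $\nabla_A\subseteq I_{\ast}$, and Lemma 3.5 yields $\lambda_A(\nabla_A)=1\in I$, whence $I=L(A)$ is not proper. Taking contrapositives on either side produces the stated equivalence between the two properness conditions.

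There is essentially no obstacle here: the only point worth flagging is that the argument uses in an essential way the compactness of $\nabla_A$, and therefore relies squarely on the standing semidegeneracy hypothesis on $\mathcal{V}$. If one dropped semidegeneracy, Lemma 3.5 would not be applicable to $\nabla_A$, and the lemma would require a separate compactness-style argument (writing $\nabla_A$ as a directed supremum of compacts from $\{\alpha\in K(A)\mid \lambda_A(\alpha)\in I\}$), which would still go through via Lemma 3.2(1) and (4) but would be less clean.
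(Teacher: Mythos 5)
Your proof is correct and takes essentially the same route as the paper's: both hinge on the compactness of $\nabla_A$ guaranteed by semidegeneracy. The only difference is cosmetic --- you invoke Lemma 3.5 with $\alpha=\nabla_A$, whereas the paper re-runs, inside this proof, the same finite-join extraction argument that underlies Lemma 3.5.
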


\begin{proof} Assume $I_{\ast}=\nabla_A$, so $\nabla_A=\bigvee\{\alpha\in K(A)|\lambda_A(\alpha)\in I\}$. Since $\nabla_A$ is a compact congruence it follows that there exist $\alpha_1,\cdots,\alpha_n\in K(A)$ such that $\alpha_1\lor...\lor \alpha_n =\nabla_A$ and $\lambda_A(\alpha_i)\in I$, for $i=1,\cdots,n$. Thus $1=\lambda_A(\alpha_1)\lor...\lor\lambda_A(\alpha_n)\in I$, i.e. $I=L(A)$. The converse implication is obvious.
\end{proof}

\begin{lema} A congruence $\theta$ of $A$ is proper if and only if the ideal $\theta^{\ast}$ of the lattice $L(A)$ is proper.
\end{lema}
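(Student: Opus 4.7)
The plan is to prove the two directions separately, leveraging Lemmas 3.2, 3.6, and 3.8 already established in the paper. The statement is essentially dual to Lemma 3.8, and the proof should be short.

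For the forward direction, I would argue by contrapositive: assume $\theta^{\ast} = L(A)$, so in particular $1 \in \theta^{\ast}$. By the definition of $\theta^{\ast}$, there exists $\alpha \in K(A)$ with $\alpha \subseteq \theta$ and $\lambda_A(\alpha) = 1$. By Lemma 3.2(3), this forces $\alpha = \nabla_A$, and therefore $\nabla_A \subseteq \theta$, i.e.\ $\theta = \nabla_A$. Thus if $\theta$ is proper, then $\theta^{\ast}$ is proper. Note that this step uses semidegeneracy implicitly through Lemma 3.2(3), since $\nabla_A$ being compact is what makes the characterization of $\lambda_A^{-1}(1)$ clean.

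For the reverse direction, the slickest route is to invoke Lemma 3.6 and Lemma 3.8 in succession. Suppose $\theta^{\ast}$ is a proper ideal of $L(A)$. By Lemma 3.8 applied to $I = \theta^{\ast}$, the congruence $(\theta^{\ast})_{\ast}$ is proper, i.e.\ $(\theta^{\ast})_{\ast} \neq \nabla_A$. Lemma 3.6 gives $\theta \subseteq (\theta^{\ast})_{\ast}$, hence $\theta \subseteq (\theta^{\ast})_{\ast} \neq \nabla_A$, so $\theta$ itself is proper.

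There is essentially no obstacle here: both directions are immediate from the previously proved lemmas, and the statement can be viewed as the congruence-side counterpart of Lemma 3.8. The only small subtlety is recognizing that Lemma 3.2(3) (rather than a direct argument) is what handles the forward direction cleanly, because $\theta^{\ast}$ contains $1$ iff some compact $\alpha \subseteq \theta$ has $\lambda_A(\alpha) = 1$, which by semidegeneracy is equivalent to $\alpha = \nabla_A$.
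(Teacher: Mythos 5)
Your proof is correct and the substantive direction is exactly the paper's argument: if $1\in\theta^{\ast}$ then some compact $\alpha\subseteq\theta$ has $\lambda_A(\alpha)=1$, whence $\alpha=\nabla_A$ by Lemma 3.2(3) and $\theta=\nabla_A$. The only difference is in the converse, which the paper dismisses as obvious (since $\nabla_A\in K(A)$ by semidegeneracy, $\theta=\nabla_A$ immediately gives $1=\lambda_A(\nabla_A)\in\theta^{\ast}$), whereas you route it through Lemmas 3.6 and 3.8; both are valid and non-circular, since those lemmas precede this one.
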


\begin{proof} Assume that $1\in \theta^{\ast}$ so there exists $\alpha\in C(A)$ such that $\alpha\subseteq\theta$ and $\lambda_A(\alpha)=1$. By Lemma 3.2(3) we get $\alpha=\nabla_A$, hence $\theta=\nabla_A$.  The converse implication is obvious.
\end{proof}

Let $L$ be a bounded distributive lattice and $Id(L)$ the set of its ideals. Then $Spec_{Id}(L)$ will denote the set of prime ideals in $L$ and $Max_{Id}(L)$ the set of maximal ideals in $L$. $Spec_{Id}(L)$ (resp. $Max_{Id}(L)$) endowed with Stone topology will be denoted by $Spec_{Id,Z}(L)$ (resp. $Max_{Id,Z}(L)$).

For any ideal $I$ of $L$ we denote $D_{Id}(I) = \{Q\in Spec_{Id}(L)|I\not\subseteq Q\}$ and $V_{Id}(I) = \{Q\in Spec_{Id}(L)|I\subseteq Q\}$. If $x\in L$ then we use the notation $D_{Id}(x) = D_{Id}((x])  = \{Q\in Spec_{Id}(L)|x\notin Q\}$ and $V_{Id}(x) = V_{Id}((x])  = \{Q\in Spec_{Id}(L)|x\in Q\}$, where $(x]$ is the principal ideal of $L$ generated by the set $\{x\}$. Recall from \cite{BalbesDwinger}, \cite{Johnstone} that the family $(D_{Id}(x))_{x\in L}$ is a basis of open sets for the Stone topology on $Spec_{Id}(L)$.

\begin{lema} If $\phi\in Spec(A)$ then $\phi^{\ast}$ is a prime ideal of the lattice $L(A)$.
\end{lema}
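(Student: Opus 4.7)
The plan is to verify the two requirements for $\phi^{\ast}$ to be a prime ideal of $L(A)$: properness (which is immediate) and the usual meet-primality condition. Properness follows at once from Lemma 3.9: since $\phi\in Spec(A)\subseteq Con(A)-\{\nabla_A\}$ is a proper congruence, the ideal $\phi^{\ast}$ is proper.

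For primality, I would take arbitrary $x,y\in L(A)$ with $x\wedge y\in \phi^{\ast}$ and aim to show $x\in\phi^{\ast}$ or $y\in\phi^{\ast}$. Since $L(A)=C(A)/\!\equiv$, write $x=\lambda_A(\alpha)$ and $y=\lambda_A(\beta)$ with $\alpha,\beta\in C(A)$, so by Lemma 3.2(2) we have $x\wedge y=\lambda_A([\alpha,\beta])$. Unfolding the definition of $\phi^{\ast}$, the hypothesis $\lambda_A([\alpha,\beta])\in\phi^{\ast}$ produces some $\gamma\in K(A)$ with $\gamma\subseteq\phi$ and $\lambda_A([\alpha,\beta])=\lambda_A(\gamma)$, which by the characterization after Lemma 3.1 means $\rho([\alpha,\beta])=\rho(\gamma)$.

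The key step, and the only mildly nontrivial point, is to promote this equality of radicals back to an inclusion in $\phi$. Because $\phi$ is prime, $\phi$ is radical (the intersection defining $\rho(\phi)$ contains $\phi$ itself), so $\gamma\subseteq\phi$ gives $\rho(\gamma)\subseteq\phi$. Combining with Lemma 2.4(1) yields
\[
[\alpha,\beta]\subseteq \rho([\alpha,\beta])=\rho(\gamma)\subseteq\phi.
\]
Now the primality of $\phi$ (applied to arbitrary congruences $\alpha,\beta\in Con(A)$) gives $\alpha\subseteq\phi$ or $\beta\subseteq\phi$; say $\alpha\subseteq\phi$.

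It remains to conclude $\lambda_A(\alpha)\in\phi^{\ast}$, and this is where one must be careful, since $\alpha\in C(A)$ is not in general a compact congruence, so $\lambda_A(\alpha)$ is not automatically of the form recorded in $\phi^{\ast}$. I would handle this via Lemma 3.3: monotonicity of $(\cdot)^{\ast}$ gives $\alpha^{\ast}\subseteq \phi^{\ast}$, while Lemma 3.3 identifies $\alpha^{\ast}=(\lambda_A(\alpha)]$, so $\lambda_A(\alpha)\in\alpha^{\ast}\subseteq\phi^{\ast}$, i.e.\ $x\in\phi^{\ast}$. The symmetric case yields $y\in\phi^{\ast}$, finishing the proof. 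The main obstacle is precisely this last bridge from $\alpha\subseteq\phi$ (with $\alpha$ possibly non-compact) to membership $\lambda_A(\alpha)\in\phi^{\ast}$, which is resolved cleanly by Lemma 3.3.
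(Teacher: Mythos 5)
Your proof is correct and follows essentially the same route as the paper's: properness via Lemma 3.9, lifting $x\wedge y=\lambda_A([\alpha,\beta])\in\phi^{\ast}$ to $[\alpha,\beta]\subseteq\phi$ through the equality of radicals $\rho([\alpha,\beta])=\rho(\gamma)$ with $\gamma\subseteq\phi$, and then the primality of $\phi$. The only difference is at the very end: the paper concludes $\lambda_A(\alpha)\in\phi^{\ast}$ directly from $\alpha\subseteq\phi$ (implicitly reading the definition of $\phi^{\ast}$ with representatives ranging over $C(A)$, as it does throughout its proofs), whereas you route this step through Lemma 3.3 and monotonicity of $(\cdot)^{\ast}$ to accommodate the displayed definition in terms of $K(A)$ --- a harmless and in fact slightly more careful finish.
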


\begin{proof} By Lemma 3.9, $\phi^{\ast}$ is a proper ideal of $L(A)$. Assume that $x, y$ are two elements of $L(A)$ such that $x\land y\in \phi^{\ast}$. One can find $\alpha,\beta\in C(A)$ such that $x=\lambda_A(\alpha)$, $y=\lambda_A(\beta)$, hence $\lambda_A([\alpha,\beta])=x\land y \in\phi^{\ast}$. Then $\lambda_A([\alpha,\beta])=\lambda_A(\gamma)$, for some $\gamma\in C(A)$ such that $\gamma\subseteq \phi$. Thus we get $\rho([\alpha,\beta])=\rho(\gamma)$, hence $[\alpha,\beta]\subseteq\phi$, because $\gamma\subseteq \phi$ and $\phi\in Spec(A)$. It follows that $\alpha\subseteq \phi$ or $\beta\subseteq \phi$, so we get $x=\lambda_A(\alpha)\in \phi^{\ast}$ or  $y=\lambda_A(\beta)\in \phi^{\ast}$. Conclude that the ideal $\phi^{\ast}$ is  prime.
\end{proof}

By the previous lemma, one can consider the map $u:Spec(A)\rightarrow Spec_{Id}(L(A))$ defined by $u(\phi)=\phi^{\ast}$, for any $\phi\in Spec(A)$.

\begin{lema}
\usecounter{nr}
\begin{list}{(\arabic{nr})}{\usecounter{nr}}
\item For any ideal $I$ of $L(A)$ we have $u^{-1}(D_{Id}(I))= D(I_{\ast})$;
\item $u$ is an injective continuous function.
\end{list}
\end{lema}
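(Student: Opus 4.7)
The plan is to reduce part (1) to the pointwise equivalence $I\subseteq\phi^{\ast}\iff I_{\ast}\subseteq\phi$, and then derive part (2) from part (1) together with Lemma 3.7. First I would unfold the definitions: by Lemma 3.11, $u(\phi)=\phi^{\ast}$ is indeed a prime ideal of $L(A)$, and
$$u^{-1}(D_{Id}(I))=\{\phi\in Spec(A)\mid I\not\subseteq\phi^{\ast}\},\qquad D(I_{\ast})=\{\phi\in Spec(A)\mid I_{\ast}\not\subseteq\phi\}.$$
Hence (1) is equivalent to showing $I\subseteq\phi^{\ast}\iff I_{\ast}\subseteq\phi$ for each $\phi\in Spec(A)$.

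For the forward direction I would apply the order-preserving map $(\cdot)_{\ast}$ to $I\subseteq\phi^{\ast}$, obtaining $I_{\ast}\subseteq(\phi^{\ast})_{\ast}$; Lemma 3.7 then collapses $(\phi^{\ast})_{\ast}$ to $\phi$. For the reverse direction, I would apply the order-preserving map $(\cdot)^{\ast}$ to $I_{\ast}\subseteq\phi$ to get $(I_{\ast})^{\ast}\subseteq\phi^{\ast}$, and then prepend the inclusion $I\subseteq(I_{\ast})^{\ast}$ furnished by Lemma 3.6. Thus the whole of (1) sits inside the Galois-style connection already set up between $(\cdot)^{\ast}$ and $(\cdot)_{\ast}$.

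For part (2), injectivity of $u$ is immediate from Lemma 3.7: if $u(\phi)=u(\psi)$, i.e.\ $\phi^{\ast}=\psi^{\ast}$, then $\phi=(\phi^{\ast})_{\ast}=(\psi^{\ast})_{\ast}=\psi$. To establish continuity, it suffices to check preimages of the basic open sets $D_{Id}(x)$, $x\in L(A)$, of the Stone topology on $Spec_{Id}(L(A))$. By part (1) applied to the principal ideal $(x]$, we have $u^{-1}(D_{Id}(x))=u^{-1}(D_{Id}((x]))=D(((x])_{\ast})$, and this is open in $Spec_Z(A)$ by the very definition of the Zariski topology on $Spec(A)$.

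I anticipate no substantive obstacle: both claims are essentially bookkeeping on top of the adjoint-like interaction between $(\cdot)^{\ast}$ and $(\cdot)_{\ast}$ captured in Lemmas 3.6 and 3.7. The only care required is to track the direction of the inclusions when applying the order-preserving maps, and to remember that the monotonicity of $(\cdot)^{\ast}$ and $(\cdot)_{\ast}$ was recorded explicitly in the paragraph preceding Lemma 3.4.
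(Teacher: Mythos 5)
Your proof is correct and follows essentially the same route as the paper: both reduce part (1) to the pointwise equivalence $I\subseteq\phi^{\ast}\iff I_{\ast}\subseteq\phi$ via Lemmas 3.6 and 3.7 together with monotonicity of $(\cdot)^{\ast}$ and $(\cdot)_{\ast}$, and both obtain injectivity from $(\phi^{\ast})_{\ast}=\phi$ and continuity from part (1). Your explicit check on the basic open sets $D_{Id}(x)$ merely spells out what the paper leaves as "follows by (1)".
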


\begin{proof}
$(1)$ Assume that $I$ is an ideal of the lattice $L(A)$. We shall prove that for any $\phi\in Spec(A)$, the following equivalence holds: $I\subseteq \phi^{\ast}$ if and only if $I_{\ast}\subseteq \phi$.

Assuming that $I\subseteq \phi^{\ast}$ it follows that $I_{\ast}\subseteq (\phi^{\ast})_{\ast}= \phi$ (by Lemma 3.7). Conversely, if $I_{\ast}\subseteq \phi$ then $I\subseteq (I_{\ast})^{\ast}\subseteq \phi^{\ast}$ (by Lemma 3.6). Then the desired equivalence is proven.

It follows that for any $\phi\in Spec(A)$, the following equivalences hold:

$\phi\in u^{-1}(D_{Id}(I))$ iff $\phi^{\ast}\in D_{Id}(I)$ iff $I\not\subseteq \phi^{\ast}$ iff $\phi\in D(I_{\ast})$,

hence the equality $u^{-1}(D_{Id}(I))= D(I_{\ast})$ is checked.

$(2)$ Assume that $\phi_1,\phi_2\in Spec(A)$ and $\phi_1^{\ast}=\phi_2^{\ast}$, hence, by applying Lemma 3.7 we get $\phi_1= (\phi_1^{\ast})_{\ast}=(\phi_2^{\ast})_{\ast} =\phi_2$. Thus $u$ is injective. That $u$ is a continuous function follows by $(1)$.

\end{proof}

\begin{propozitie}
The following assertions are equivalent
\usecounter{nr}
\begin{list}{(\arabic{nr})}{\usecounter{nr}}
\item  For any ideal $I$ of $L(A)$, we have $I=(I_{\ast})^{\ast}$;
\item  For any prime ideal $P$ of $L(A)$, we have $P_{\ast}\in Spec(A)$.
\end{list}
\end{propozitie}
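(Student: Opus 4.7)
The plan is to carry the easy direction $(1) \Rightarrow (2)$ by a direct computation, and the converse $(2) \Rightarrow (1)$ by prime separation in $L(A)$ combined with an induction on the construction of $C(A)$ from $K(A)$.

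For $(1) \Rightarrow (2)$ I would fix a prime ideal $P$ of $L(A)$. Lemma 3.8 gives that $P_{\ast}$ is proper, and the compact-only characterization of prime congruences recalled in the preliminaries reduces the verification of primeness to taking $\alpha,\beta \in K(A)$ with $[\alpha,\beta] \subseteq P_{\ast}$. The congruence $[\alpha,\beta]$ belongs to $C(A)$, so Lemma 3.3 identifies $[\alpha,\beta]^{\ast}$ with the principal ideal $(\lambda_A([\alpha,\beta])]$; applying the order-preserving $(\cdot)^{\ast}$ to $[\alpha,\beta] \subseteq P_{\ast}$ and invoking hypothesis $(1)$ in the form $(P_{\ast})^{\ast} = P$ puts $\lambda_A([\alpha,\beta])$ in $P$. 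Lemma 3.2(2) turns this into $\lambda_A(\alpha) \wedge \lambda_A(\beta) \in P$, and primeness of $P$ followed by Lemma 3.5 gives $\alpha \subseteq P_{\ast}$ or $\beta \subseteq P_{\ast}$.

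For $(2) \Rightarrow (1)$ fix an ideal $I$ of $L(A)$; the inclusion $I \subseteq (I_{\ast})^{\ast}$ is Lemma 3.6, so the real content is $(I_{\ast})^{\ast} \subseteq I$. Arguing by contradiction, I would take $x \in (I_{\ast})^{\ast} \setminus I$ and use prime separation in the distributive lattice $L(A)$ to produce a prime ideal $P$ with $I \subseteq P$ and $x \notin P$. Writing $x = \lambda_A(\alpha)$ with $\alpha \in C(A)$ and $\alpha \subseteq I_{\ast} \subseteq P_{\ast}$, and appealing to $(2)$ to know that $P_{\ast}$ is a prime congruence of $A$, the goal becomes showing $\lambda_A(\alpha) \in P$, which will contradict the choice of $P$.

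I would establish this as a general claim by induction on the way $\alpha$ is built inside $C(A)$: for every $\gamma \in C(A)$ with $\gamma \subseteq P_{\ast}$, one has $\lambda_A(\gamma) \in P$. The base case $\gamma \in K(A)$ is exactly Lemma 3.5. The join case $\gamma = \gamma_1 \vee \gamma_2$ is immediate since both $\gamma_i \subseteq \gamma \subseteq P_{\ast}$ and $\lambda_A(\gamma) = \lambda_A(\gamma_1) \vee \lambda_A(\gamma_2)$. The main obstacle is the commutator case $\gamma = [\gamma_1,\gamma_2]$, which is the only step where hypothesis $(2)$ is used essentially: primeness of the \emph{congruence} $P_{\ast}$ forces some $\gamma_i \subseteq P_{\ast}$, the induction hypothesis places the corresponding $\lambda_A(\gamma_i)$ in $P$, and then $\lambda_A(\gamma) = \lambda_A(\gamma_1) \wedge \lambda_A(\gamma_2) \leq \lambda_A(\gamma_i)$ together with downward closure of $P$ gives $\lambda_A(\gamma) \in P$. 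Without primeness of $P_{\ast}$ there is no way to separate the two factors in a commutator, which is precisely the reason $(1)$ and $(2)$ need not hold for arbitrary algebras and is the crux of the equivalence.
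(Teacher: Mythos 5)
Your proposal is correct and follows essentially the same route as the paper: the same reduction of $(1)\Rightarrow(2)$ to compact congruences via $\lambda_A([\alpha,\beta])\in (P_\ast)^\ast=P$, and for $(2)\Rightarrow(1)$ the same induction on the construction of $C(A)$ (with hypothesis $(2)$ used only in the commutator step) to show $(P_\ast)^\ast\subseteq P$ for primes $P$. The only cosmetic difference is that you phrase the passage from prime ideals to arbitrary ideals as a proof by contradiction via prime separation, whereas the paper writes $(I_\ast)^\ast\subseteq\bigcap\{P\in Spec_{Id}(L(A))\mid I\subseteq P\}=I$; these are the same fact about distributive lattices.
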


\begin{proof}
$(1) \Rightarrow (2)$ Let $P$ be a prime ideal of $L(A)$.  Assume that $\alpha,\beta\in K(A)$ and $[\alpha,\beta]\subseteq P_{\ast}$. Then $[\alpha,\beta]\in C(A)$ and $\lambda_A(\alpha)\land \lambda_A(\beta)= \lambda_A([\alpha,\beta])\in (P_{\ast})^{\ast}= P$, so $\lambda_A(\alpha)\in P$ or $\lambda_A(\beta)\in P$. By applying Lemma 3.5, one gets $\alpha\subseteq P_{\ast}$ or $\beta\subseteq P_{\ast}$, so $P_{\ast}\in Spec(A)$.

$(2) \Rightarrow (1)$ Firstly we shall prove that for any prime ideal $P$ of $L(A)$ we have $P=(P_{\ast})^{\ast}$. According to Lemma 3.6 it suffices to show that $(P_{\ast})^{\ast}\subseteq P$. By hypothesis we have $P_{\ast}\in Spec(A)$. In order to establish the inclusion $(P_{\ast})^{\ast}\subseteq P$ it suffices to prove the following implication:

$\theta\in C(A), \theta\subseteq P_{\ast}$ $\Rightarrow $ $\lambda_A(\theta)\in P$.

We shall prove this implication by induction on how the set $C(A)$ is defined:

$\bullet$ If $\theta\in K(A)$ then the implication follows by Lemma 3.5.

$\bullet$ Assume that $\theta= \theta_1\lor \theta_2$ and $\theta_1, \theta_2\in C(A)$ satisfy the hypothesis of induction. From $\theta\subseteq P_{\ast}$ it follows that $\theta_1\subseteq P_{\ast}$ and $\theta_2\subseteq P_{\ast}$. Thus $\lambda_A(\theta_1)\in P$ and $\lambda_A(\theta_2)\in P$ so $\lambda_A(\theta)= \lambda_A(\theta_1)\lor \lambda_A(\theta_2)\in P$.

$\bullet$ Assume that $\theta= [\theta_1,\theta_2]$ and $\theta_1, \theta_2\in C(A)$ satisfy the hypothesis of induction. From $\theta\subseteq P_{\ast}$ and $P_{\ast}\in Spec(A)$ it follows that $\theta_1\subseteq P_{\ast}$ or $\theta_2\subseteq P_{\ast}$. By hypothesis of induction we obtain $\lambda_A(\theta_1)\in P$ or $\lambda_A(\theta_2)\in P$, hence $\lambda_A(\theta)= \lambda_A(\theta_1)\land \lambda_A(\theta_2) \in P$.

Consider now an arbitrary ideal $I$ of $L(A)$. We shall prove that $(I_{\ast})^{\ast}\subseteq I$. Let $P$ be a prime ideal of $L(A)$ such that $I\subseteq P$. According to the first part of the proof we have $(I_{\ast})^{\ast}\subseteq (P_{\ast})^{\ast}=P$. Therefore $(I_{\ast})^{\ast}\subseteq P$ for all prime ideals $L$ of $L(A)$ such that $I\subseteq P$, so $(I_{\ast})^{\ast}\subseteq \bigcap\{P\in Spec_{Id}(L(A))|I\subseteq P\}= I$. By applying Lemma 3.6 we get $I=(I_{\ast})^{\ast}$.

\end{proof}

\begin{propozitie}
Assume that the equivalent conditions from Proposition 3.12 are verified. Then $(\cdot)^{\ast}$ is the left adjoint of $(\cdot)_{\ast}$.
\end{propozitie}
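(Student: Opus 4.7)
The plan is to verify the standard Galois connection condition: for every $\theta \in Con(A)$ and every $I \in Id(L(A))$,
$$\theta^{\ast} \subseteq I \iff \theta \subseteq I_{\ast}.$$
Once this biconditional is established, $(\cdot)^{\ast}$ is by definition the left adjoint of $(\cdot)_{\ast}$. The two maps are already known to be order-preserving (noted just after the definitions preceding Lemma 3.4), so we are in the classical setting where the Galois condition plus monotonicity characterizes the adjunction.

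For the forward implication, I would assume $\theta^{\ast} \subseteq I$ and apply the order-preserving map $(\cdot)_{\ast}$ to obtain $(\theta^{\ast})_{\ast} \subseteq I_{\ast}$. Then I would chain this with the universal inclusion $\theta \subseteq (\theta^{\ast})_{\ast}$ given by Lemma 3.6 to conclude $\theta \subseteq I_{\ast}$. This half does not require the hypothesis of Proposition 3.12.

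For the reverse implication, I would assume $\theta \subseteq I_{\ast}$ and apply the order-preserving map $(\cdot)^{\ast}$ to get $\theta^{\ast} \subseteq (I_{\ast})^{\ast}$. This is where the hypothesis from Proposition 3.12 enters: by assumption, condition (1) of that proposition holds, so $(I_{\ast})^{\ast} = I$, which immediately yields $\theta^{\ast} \subseteq I$.

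The proof is essentially a formal verification, so there is no real obstacle; the only subtle point is recognizing that exactly one direction of the biconditional needs the hypothesis $(I_{\ast})^{\ast} = I$, while the other direction is available unconditionally from Lemma 3.6. This asymmetry also clarifies why Proposition 3.12 is the right hypothesis to assume: without it, $(\cdot)^{\ast}$ is merely a monotone map but need not be left adjoint to $(\cdot)_{\ast}$, because the unit inequality $I \subseteq (I_{\ast})^{\ast}$ might be strict.
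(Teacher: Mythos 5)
Your proof is correct and follows essentially the same route as the paper: both directions of the adjunction condition $\theta^{\ast}\subseteq I \iff \theta\subseteq I_{\ast}$ are obtained by combining monotonicity of the two maps with $\theta\subseteq(\theta^{\ast})_{\ast}$ from Lemma 3.6 for the forward direction and $I=(I_{\ast})^{\ast}$ from Proposition 3.12 for the reverse. Your additional observation about which half actually needs the hypothesis is accurate and a nice clarification, but the argument itself matches the paper's.
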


\begin{proof} We have to show that for all $\theta\in Con(A)$ and $I\in Id(L(A))$ we have $\theta^{\ast}\subseteq I$ if and only if $\theta\subseteq I_{\ast}$. If $\theta^{\ast}\subseteq I$ then $\theta\subseteq (\theta^{\ast})_{\ast}\subseteq I_{\ast}$ (by Lemma 3.6). Conversely, if $\theta\subseteq I_{\ast}$, then by applying that $I=(I_{\ast})^{\ast}$, we obtain the inclusions $\theta^{\ast}\subseteq (I_{\ast})^{\ast}= I$.
\end{proof}

\begin{lema} Assuming that the equivalent conditions from Proposition 3.12 are verified the following assertions hold
\usecounter{nr}
\begin{list}{(\arabic{nr})}{\usecounter{nr}}
\item If $\theta,\chi\in Con(A)$ then $[\theta,\chi]^{\ast}$ = $(\theta\land \chi)^{\ast}$ = $\theta^{\ast}\bigcap \chi^{\ast}$;
\item If $(\theta_i)_{i\in I}$ is a family of congruences of $A$ then $(\bigvee_{i\in I}\theta_i)^{\ast} = \bigvee_{i\in I}\theta_i^{\ast}$;
\item If $(I_t)_{t\in T}$ is a family of ideals of $L(A)$ then $(\bigcap_{t\in T}I_t)_{\ast} = \bigcap_{t\in T}(I_t)_{\ast}$.

\end{list}
\end{lema}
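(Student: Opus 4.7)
The plan is to derive (2) and (3) as immediate consequences of the adjunction provided by Proposition 3.13, and to prove (1) by combining monotonicity of $(\cdot)^{\ast}$ with a single-witness commutator computation based on Lemma 3.2(2) and Lemma 3.4.

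Under the equivalent conditions of Proposition 3.12, Proposition 3.13 asserts that $(\cdot)^{\ast}$ is left adjoint to $(\cdot)_{\ast}$. Since left adjoints preserve all joins and right adjoints preserve all meets, and since joins in both $Con(A)$ and $Id(L(A))$ are computed as the expected suprema while meets in both posets are intersections, (2) reduces to preservation of joins by $(\cdot)^{\ast}$ and (3) to preservation of intersections by $(\cdot)_{\ast}$; in each case the equality follows from the general fact about adjoint monotone maps between complete lattices.

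For (1), the monotonicity of $(\cdot)^{\ast}$ together with $[\theta,\chi] \subseteq \theta \wedge \chi \subseteq \theta$ and $\theta \wedge \chi \subseteq \chi$ yields the easy direction $[\theta,\chi]^{\ast} \subseteq (\theta \wedge \chi)^{\ast} \subseteq \theta^{\ast} \cap \chi^{\ast}$. For the reverse inclusion I would take $x \in \theta^{\ast} \cap \chi^{\ast}$ and select compact witnesses $\alpha \in K(A)$ with $\alpha \subseteq \theta$ and $\beta \in K(A)$ with $\beta \subseteq \chi$ satisfying $x = \lambda_A(\alpha) = \lambda_A(\beta)$. Lemma 3.2(2) then yields $x = \lambda_A(\alpha) \wedge \lambda_A(\beta) = \lambda_A([\alpha,\beta])$, while closure of $C(A)$ under commutators and the monotonicity of the commutator give $[\alpha,\beta] \in C(A)$ with $[\alpha,\beta] \subseteq [\theta,\chi]$. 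Applying Lemma 3.4 to the $C(A)$-element $[\alpha,\beta]$ produces $[\alpha,\beta]^{\ast} = (\lambda_A([\alpha,\beta])]$, so $x \in [\alpha,\beta]^{\ast}$; a final application of monotonicity of $(\cdot)^{\ast}$ to $[\alpha,\beta] \subseteq [\theta,\chi]$ places $x$ in $[\theta,\chi]^{\ast}$.

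The delicate point, which is exactly where Lemma 3.4 becomes indispensable, is that hypothesis $(H)$ is not assumed, so $[\alpha,\beta]$ need not belong to $K(A)$; one therefore cannot directly exhibit it as a compact witness for $x \in [\theta,\chi]^{\ast}$. Lemma 3.4 circumvents this obstacle by describing $\vartheta^{\ast}$ as a principal ideal whenever $\vartheta \in C(A)$, thereby allowing the non-compact commutator $[\alpha,\beta]$ to mediate the required membership without appealing to $(H)$.
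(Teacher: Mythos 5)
Your proposal is correct and follows essentially the same route as the paper: the reverse inclusion in (1) via the single commutator witness $[\alpha,\beta]\in C(A)$ with $\lambda_A([\alpha,\beta])=\lambda_A(\alpha)\land\lambda_A(\beta)$, and (2), (3) as formal consequences of the adjunction in Proposition 3.13. Your extra detour through Lemma 3.4 to conclude $x\in[\theta,\chi]^{\ast}$ without assuming $[\alpha,\beta]$ compact is a harmless refinement of a step the paper's proof passes over by treating $\theta^{\ast}$ as indexed by $C(A)$ rather than $K(A)$.
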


\begin{proof} $(1)$ Since $[\theta,\chi]\subseteq \theta\cap\chi\subseteq \theta$ and $[\theta,\chi]\subseteq \theta\cap\chi\subseteq \chi$ and $(\cdot)^{\ast}$ is order-preserving we obtain the inclusions $[\theta,\chi]^{\ast}\subseteq (\theta\cap\chi)^{\ast}\subseteq \theta^{\ast}\cap\chi^{\ast}$.

If $x\in \theta^{\ast}\bigcap \chi^{\ast}$ then there exist $\alpha,\beta\in C(A)$ such that $x=\lambda_A(\alpha)=\lambda_A(\beta)$, $\alpha\subseteq \theta$ and $\beta\subseteq \chi$. Thus $x=\lambda_A(\alpha)\land \lambda_A(\beta)= \lambda_A([\alpha,\beta])\in [\theta,\chi]^{\ast}$, because $[\alpha,\beta]\in C(A)$ and $[\alpha,\beta]\subseteq [\theta,\chi]$. It follows that $\theta^{\ast}\bigcap \chi^{\ast}\subseteq [\theta,\chi]^{\ast}$, therefore $[\theta,\chi]^{\ast}$ = $(\theta\land \chi)^{\ast}$ = $\theta^{\ast}\bigcap \chi^{\ast}$.

$(2)$ and $(3)$ follow by applying the adjointness situation described in Proposition 3.13.

\end{proof}

Assuming that the equivalent conditions from  Proposition 3.12 are verified one can consider the order - preserving function $v:Spec_{Id}L((A))\rightarrow Spec(A)$, defined by  $v(P) = P_{\ast}$, for all $P\in Spec_{Id}(L(A))$.

\begin{lema} Assuming that the equivalent conditions from Proposition 3.12 are verified we have $v^{-1}(D(\theta))= D_{Id}(\theta^{\ast})$, for any congruence $\theta$ of $A$;

\end{lema}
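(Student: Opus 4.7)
The plan is to unfold the definitions of the two sides and reduce the claimed equality to the adjunction established in Proposition 3.13. For any prime ideal $P \in Spec_{Id}(L(A))$ we have
\[
P \in v^{-1}(D(\theta)) \iff v(P) = P_{\ast} \in D(\theta) \iff \theta \not\subseteq P_{\ast},
\]
while, directly from the definition of $D_{Id}$, we have $P \in D_{Id}(\theta^{\ast}) \iff \theta^{\ast} \not\subseteq P$. Hence the proposed equality of sets reduces to the logical equivalence
\[
\theta \subseteq P_{\ast} \iff \theta^{\ast} \subseteq P,
\]
which should be verified for every $P \in Spec_{Id}(L(A))$.

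The main (and essentially only) step is to invoke Proposition 3.13: under the standing hypothesis that the equivalent conditions of Proposition 3.12 hold, the operation $(\cdot)^{\ast}$ is the left adjoint of $(\cdot)_{\ast}$, so $\theta^{\ast} \subseteq I \iff \theta \subseteq I_{\ast}$ for every congruence $\theta$ of $A$ and every ideal $I$ of $L(A)$. Specialising $I$ to the prime ideal $P$ (which is in particular an ideal) yields the displayed equivalence at once. Taking complements in $Spec_{Id}(L(A))$ then gives $v^{-1}(D(\theta)) = D_{Id}(\theta^{\ast})$, as required. There is no real obstacle here: the content of the lemma is absorbed into the adjointness relation of Proposition 3.13, and the proof is essentially a two-line manipulation together with the observation that $v^{-1}$ sends complements to complements.
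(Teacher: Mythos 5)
Your proof is correct and follows essentially the same route as the paper's: both reduce the set equality to the equivalence $\theta^{\ast}\subseteq P \iff \theta\subseteq P_{\ast}$ for each prime ideal $P$ and then pass to complements. The only difference is that you obtain this equivalence by citing the adjointness of Proposition 3.13, whereas the paper re-derives it inline from Lemma 3.6 and the identity $P=(P_{\ast})^{\ast}$ --- which is exactly the argument used to prove Proposition 3.13 in the first place.
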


\begin{proof} Assume that $\theta$ is a congruence of $A$ and $P$ is a prime ideal of $L(A)$. Firstly, we shall prove the following equivalence: $\theta^{\ast}\subseteq P$ if and only if $\theta\subseteq P_{\ast}$. By using Lemma 3.6,  $\theta^{\ast}\subseteq P$ implies $\theta\subseteq (\theta^{\ast})_{\ast}\subseteq P_{\ast}$. Conversely, $\theta\subseteq P_{\ast}$ implies $\theta^{\ast}\subseteq (P_{\ast})^{\ast}= P$ (the last equality follows from the hypothesis that the equivalent conditions from Proposition 3.12 are verified). Then for each prime ideal $P$ of $L(A)$, $\theta^{\ast}\subseteq P$ if and only if $\theta\subseteq P_{\ast}$, hence the following equivalences hold: $P\in v^{-1}(D(\theta))$ iff $P_{\ast}\in D(\theta)$ iff $\theta^{\ast}\not\subseteq P$ iff $\theta\not\subseteq P_{\ast}$ iff $P\in D_{Id}(\theta^{\ast})$. Conclude that $v^{-1}(D(\theta))= D_{Id}(\theta^{\ast})$.

\end{proof}

\begin{propozitie}
If the equivalent conditions from Proposition 3.12 are verified then $u:Spec_Z(A)\rightarrow Spec_{Id,Z}(L(A))$ and $v:Spec_{Id,Z}(L(A))\rightarrow Spec_Z(A)$ are homeomorphisms, inverse to one another.
\end{propozitie}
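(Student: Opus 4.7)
The plan is to observe that essentially all the technical work has been done in the preceding lemmas, so the proof amounts to assembling them. I need to verify two things: that $u$ and $v$ are mutually inverse set maps, and that each is continuous. Once both hold, $u$ and $v$ are automatically homeomorphisms since a continuous bijection with continuous inverse is a homeomorphism.

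First I would check that $u$ and $v$ take values in the right sets. By Lemma 3.10, $u(\phi)=\phi^{\ast}\in Spec_{Id}(L(A))$ whenever $\phi\in Spec(A)$. By the hypothesis that the equivalent conditions of Proposition 3.12 hold, condition (2) of that proposition tells us $v(P)=P_{\ast}\in Spec(A)$ for each $P\in Spec_{Id}(L(A))$. Next, I would verify $v\circ u = \mathrm{id}_{Spec(A)}$: for any $\phi\in Spec(A)$, Lemma 3.7 gives $v(u(\phi))=(\phi^{\ast})_{\ast}=\phi$. For $u\circ v = \mathrm{id}$, take $P\in Spec_{Id}(L(A))$ and apply condition (1) of Proposition 3.12 to the ideal $I:=P$, yielding $u(v(P))=(P_{\ast})^{\ast}=P$. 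This establishes that $u$ and $v$ are mutually inverse bijections.

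For continuity, I would use that the basic open sets of $Spec_Z(A)$ are $D(\theta)$ for $\theta\in Con(A)$, and similarly $D_{Id}(I)$ for $I\in Id(L(A))$ form a basis for the Stone topology on $Spec_{Id,Z}(L(A))$. Continuity of $u$ is exactly the content of Lemma 3.11(2), which gives $u^{-1}(D_{Id}(I))=D(I_{\ast})$, an open set in $Spec_Z(A)$. For $v$, Lemma 3.15 provides $v^{-1}(D(\theta))=D_{Id}(\theta^{\ast})$, which is open in $Spec_{Id,Z}(L(A))$. Hence both maps are continuous.

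Since there is no genuine obstacle in this statement, the main care is simply in correctly invoking condition (1) of Proposition 3.12 for the arbitrary prime ideal $P$ (rather than only for ideals of the form $\theta^{\ast}$) to get $u\circ v=\mathrm{id}$; everything else is a direct quotation of earlier lemmas.
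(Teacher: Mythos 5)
Your proof is correct and follows essentially the same route as the paper's: mutual inverseness via Lemma 3.7 and Proposition 3.12(1), and continuity via Lemmas 3.11 and 3.15. The only difference is that you additionally spell out the well-definedness of $v$ via Proposition 3.12(2), which the paper leaves implicit.
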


\begin{proof} By using Proposition 3.12(1) and Lemma 3.7, for all $P\in Spec_{Id}(L(A))$ and $\phi\in Spec(A)$, we have $u(v(P))= (P_{\ast})^{\ast}=P$ and $v(u(\phi))=(\phi^{\ast})_{\ast}=\phi$, so $u$ and $v$ are bijective functions, inverse to one another. That $u$ and $v$ are continuous functions follows by applying Lemmas 3.11 and 3.15.
\end{proof}

According to the previous result, if the equivalent conditions of Proposition 3.12 are verified then the prime spectra of $A$ and $L(A)$ are homeomorphic, so we can say that $L(A)$ is the reticulation of the algebra $A$.

Recall from \cite{Hochster} that a topological space $X$ is a spectral space if it fulfills the following properties:

$\bullet$ $X$ is a compact $T_0$-space;

$\bullet$ the open and compact subsets of $X$ form a basis of the topology, closed under finite intersections;

$\bullet$ any closed and irreducible subset of $X$ has a generic point.

The prime spectrum $Spec_Z(R)$ of a commutative ring $R$ and the prime spectrum $Spec_{Id,Z}(L)$ of a bounded distributive lattice $L$ are the main examples of spectral spaces (see \cite{Dickmann}, \cite{Johnstone}).  Let $L$ be a bounded distributive lattice and $\mathcal{B}$ the basis of compact open subsets of $Spec_{Id,Z}(L)$. By the Stone duality of bounded distributive lattices we know that $\mathcal{B}$ = $(D_{Id}(a))_{a\in L}$ (see \cite{Belluce}).

\begin{corolar} Assume that the equivalent conditions from Proposition 3.12 are verified. Then $Spec_Z(A)$ is a spectral space and $(D(\alpha))_{\alpha\in C(A)}$ is the basis of compact open subsets of $Spec_Z(A)$.
\end{corolar}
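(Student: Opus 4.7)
The plan is to transport the spectral-space structure of $Spec_{Id,Z}(L(A))$ across the homeomorphism established in Proposition 3.16, and then identify the basis of compact open sets on the $Spec_Z(A)$ side in terms of the family $(D(\alpha))_{\alpha\in C(A)}$.

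First I would record the well-known fact (going back to Stone, and cited via \cite{BalbesDwinger}, \cite{Johnstone}, \cite{Hochster}) that the prime ideal spectrum $Spec_{Id,Z}(L(A))$ of the bounded distributive lattice $L(A)$ is a spectral space, with basis of compact open subsets $\mathcal{B} = (D_{Id}(a))_{a\in L(A)}$. Since by Proposition 3.16 the map $u:Spec_Z(A)\to Spec_{Id,Z}(L(A))$ is a homeomorphism, and spectrality is preserved under homeomorphism, $Spec_Z(A)$ is a spectral space as well.

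Next I would identify what the basis $\mathcal{B}$ corresponds to on the $Spec_Z(A)$ side. By Lemma 3.15, for any congruence $\theta$ of $A$, $v^{-1}(D(\theta)) = D_{Id}(\theta^{\ast})$; equivalently, $u^{-1}(D_{Id}(\theta^{\ast})) = D(\theta)$. Specializing to $\alpha\in C(A)$, Lemma 3.4 gives $\alpha^{\ast} = (\lambda_A(\alpha)]$, hence $D_{Id}(\alpha^{\ast}) = D_{Id}(\lambda_A(\alpha))$. Therefore $u^{-1}(D_{Id}(\lambda_A(\alpha))) = D(\alpha)$ for every $\alpha\in C(A)$. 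Since $\lambda_A:C(A)\to L(A)$ is surjective by the very definition $L(A) = C(A)/{\equiv}$, each basic compact open $D_{Id}(a)$ in $Spec_{Id,Z}(L(A))$ is of the form $D_{Id}(\lambda_A(\alpha))$ for some $\alpha\in C(A)$. Pulling this basis back through the homeomorphism $u$ yields precisely the family $(D(\alpha))_{\alpha\in C(A)}$, which is therefore a basis of open sets for $Spec_Z(A)$ consisting of compact sets (the homeomorphism transports compactness as well).

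The only delicate point is to make sure the identification of the two bases is really bijective at the level of sets indexed by $L(A)$ versus $C(A)$: distinct $\alpha,\beta\in C(A)$ with $\lambda_A(\alpha) = \lambda_A(\beta)$ produce the same open set $D(\alpha) = D(\beta)$ (which is consistent with $\rho(\alpha) = \rho(\beta)$ giving $V(\alpha) = V(\beta)$), so the family $(D(\alpha))_{\alpha\in C(A)}$ is genuinely the same family as $(D_{Id}(a))_{a\in L(A)}$ read through $u$, and both the basis property and the compactness of each $D(\alpha)$ follow. This is the step I expect to need the most care, but it reduces to the bookkeeping already provided by Lemmas 3.4 and 3.15, and Proposition 3.16 together with surjectivity of $\lambda_A$.
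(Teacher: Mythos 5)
Your proposal is correct and follows essentially the same route as the paper: transport the standard basis $(D_{Id}(\lambda_A(\alpha)))_{\alpha\in C(A)}$ of compact open sets of $Spec_{Id,Z}(L(A))$ back through the homeomorphism $u$ of Proposition 3.16, using Lemma 3.4 to rewrite $D_{Id}(\lambda_A(\alpha))$ as $D_{Id}(\alpha^{\ast})$. The only difference is bookkeeping: you invoke Lemma 3.15 (via $v=u^{-1}$) to get $u^{-1}(D_{Id}(\alpha^{\ast}))=D(\alpha)$ in one step, whereas the paper goes through Lemma 3.11(1) to obtain $D((\alpha^{\ast})_{\ast})$ and then separately verifies $V(\alpha)=V((\alpha^{\ast})_{\ast})$ from Lemmas 3.6 and 3.7 --- your shortcut is legitimate and slightly cleaner, and your closing remark about $\lambda_A$ being surjective correctly handles the reindexing of the basis from $L(A)$ to $C(A)$.
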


\begin{proof} We know that $(D_{Id}(\lambda_A(\alpha)))_{\alpha\in C(A)}$ is the basis of compact sets of $Spec_{Id,Z}(L(A))$. According to Proposition 3.16, $(u^{-1}(D_{Id}(\lambda_A(\alpha))))_{\alpha\in C(A)}$ is the basis of compact open sets of $Spec_(A)$. By Lemma 3.4, for any $\alpha\in C(A)$ we have $D_{Id}(\lambda_A(\alpha))=D_{Id}(\lambda_A((\alpha)])= D_{Id}(\alpha^{\ast})$. Thus, by using Lemma 3.11(1), we get $u^{-1}(D_{Id}(\lambda_A(\alpha)))=u^{-1}(D_{Id}(\alpha^{\ast}))= D((\alpha^{\ast})_{\ast})$.

We remark that for any $\phi\in Spec(A)$ we have: $\alpha\subseteq \phi$ if and only if $(\alpha^{\ast})_{\ast}\subseteq \phi$. Indeed, $\alpha\subseteq \phi$  implies $ (\alpha^{\ast})_{\ast}\subseteq (\phi^{\ast})_{\ast}= \phi$ (by Lemma 3.7) and the converse implication follows by using Lemma 3.6. Therefore $V(\alpha)= V((\alpha^{\ast})_{\ast})$, hence  $u^{-1}(D_{Id}(\lambda_A(\alpha)))= D((\alpha^{\ast})_{\ast})= D(\alpha)$. It follows that  $(D(\alpha))_{\alpha\in C(A)}$ is the basis of compact open subsets of $Spec_Z(A)$.
\end{proof}

\begin{propozitie} If $A$ is an arbitrary algebra in the semidegenerate congruence-modular variety $\mathcal{V}$ then the following hold
\usecounter{nr}
\begin{list}{(\arabic{nr})}{\usecounter{nr}}
\item If $\phi\in Max(A)$ then $\phi^{\ast}\in Max_{Id}(L(A))$;
\item If $M\in Max_{Id}(L(A))$ then $M_{\ast}\in Max(A)$;
\item $u|_{Max(A)}:Max_Z(A)\rightarrow Max_{Id,Z}(L(A))$ is a bijective map;
\item The following two functions $u|_{Max(A)}:Max_Z(A)\rightarrow Max_{Id,Z}(L(A))$ and $v|_{Max_{Id}(L(A))}: Max_{Id,Z}(L(A))\rightarrow Max_Z(A)$ are homeomorphisms, inverse to one another.
\item  Then $Max_Z(A)$ is a compact $T_1$-space.
\end{list}
\end{propozitie}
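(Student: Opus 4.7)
The plan is to prove parts (1)--(5) in sequence, using the operators $(\cdot)^{\ast}$ and $(\cdot)_{\ast}$ and Lemmas 3.5--3.11, but \emph{without} invoking the hypothesis of Proposition 3.12 (which is not assumed here). The key observation is that maximality will by itself force the ``missing'' equality $M=(M_{\ast})^{\ast}$ for maximal $M$, which plays the role of Proposition 3.12 on the maximal layer.

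For (1), given $\phi\in Max(A)$, Lemma 3.10 gives that $\phi^{\ast}$ is prime and Lemma 3.9 that it is proper. If $\phi^{\ast}\subseteq I$ for a proper ideal $I$, Lemma 3.8 says $I_{\ast}$ is a proper congruence, and by Lemma 3.7 it contains $(\phi^{\ast})_{\ast}=\phi$; maximality of $\phi$ gives $I_{\ast}=\phi$, hence by Lemma 3.6, $I\subseteq(I_{\ast})^{\ast}=\phi^{\ast}$, so $I=\phi^{\ast}$. For (2), let $M\in Max_{Id}(L(A))$; Lemma 3.8 makes $M_{\ast}$ proper. If $M_{\ast}\subsetneq\theta$ with $\theta$ proper, then $M\subseteq(M_{\ast})^{\ast}\subseteq\theta^{\ast}$ (Lemma 3.6) and $\theta^{\ast}$ is proper (Lemma 3.9), so maximality of $M$ forces $\theta^{\ast}=M$; Lemma 3.6 then yields $\theta\subseteq(\theta^{\ast})_{\ast}=M_{\ast}$, contradicting $M_{\ast}\subsetneq\theta$.

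For (3), injectivity of $u|_{Max(A)}$ comes from Lemma 3.11(2); for surjectivity, given $M\in Max_{Id}(L(A))$, by (2) $M_{\ast}\in Max(A)$, Lemma 3.6 gives $M\subseteq(M_{\ast})^{\ast}$, Lemma 3.9 makes $(M_{\ast})^{\ast}$ proper, and maximality of $M$ forces $M=(M_{\ast})^{\ast}=u(M_{\ast})$. For (4), continuity of $u|_{Max(A)}$ restricts from Lemma 3.11(1); continuity of $v|_{Max_{Id}(L(A))}$ reduces to verifying $\theta\not\subseteq M_{\ast}\iff\theta^{\ast}\not\subseteq M$ for maximal $M$, which follows from Lemmas 3.6--3.7 together with the equality $M=(M_{\ast})^{\ast}$ established in (3); combined with $v\circ u=\mathrm{id}$ (Lemma 3.7) and $u\circ v=\mathrm{id}$, the two continuous bijections are inverse homeomorphisms. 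Finally, (5) is immediate from (4), since $Max_{Id,Z}(L)$ is a compact $T_1$-space for any bounded distributive lattice $L$. The main obstacle is (2): the naive ``dual'' of (1) would want an equality $\theta=(\theta^{\ast})_{\ast}$, which is only an inclusion in general; the trick is to pass to $\theta^{\ast}$ on the lattice side, where maximality of $M$ together with Lemma 3.9 forces $\theta^{\ast}=M$, after which Lemma 3.6 closes the loop.
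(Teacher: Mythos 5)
Your proof is correct and follows essentially the same route as the paper: parts (1) and (2) use exactly the paper's interplay of Lemmas 3.6--3.9 with maximality to force the equalities $I=(I_\ast)^\ast$ and $\theta=(\theta^\ast)_\ast$ on the maximal layer, and (3)--(5) then follow as in the paper. The only difference is that you spell out the "routine verification" of (4) (the equivalence $\theta\subseteq M_\ast\iff\theta^\ast\subseteq M$ via $M=(M_\ast)^\ast$), which the paper leaves implicit.
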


\begin{proof}
$(1)$ Assume that $\phi\in Max(A)$, so $\phi^{\ast}\in Spec_{Id}(L(A))$ (cf. Lemma 3.10). Let $I$ be a proper ideal of the lattice $L(A)$ such that $\phi^{\ast}\subseteq I$, so $\phi\subseteq(\phi^{\ast})_{\ast}\subseteq I_{\ast}$. By Lemma 3.8, $I_{\ast}$ is a proper congruence of $A$, therefore $\phi=I_{\ast}$. Then $I\subseteq (I_{\ast})^{\ast}=\phi^{\ast}\subseteq I$, hence $I=(I_{\ast})^{\ast}=\phi^{\ast}$. It follows that $\phi^{\ast}\in Max_{Id}(L(A))$.

$(2)$ Assume that $M\in Max_{Id}(L(A))$. Let $\theta$ be a proper congruence of $A$ such that $M_{\ast}\subseteq \theta$, so $M\subseteq (M_{\ast})^{\ast}\subseteq \theta^{\ast}$ and $\theta^{\ast}$ is a proper ideal of $L(A)$ (cf. Lemma 3.8). It results that $M=\theta^{\ast}$, hence $\theta\subseteq (\theta^{\ast})_{\ast}\subseteq M_{\ast}\subseteq \theta$, therefore $\theta=(\theta^{\ast})_{\ast}=M_{\ast}$. Then $M_{\ast}$ is a maximal congruence of $A$.

$(3)$ By $(1)$ and $(2)$.

$(4)$ By a routine verification.

$(5)$ We know from \cite{Johnstone}, p.66 that $ Max_{Id,Z}(L(A))$ is a compact $T_1$-space, then we apply Corollary (4).
\end{proof}

\begin{definitie} The algebra $A $ is said to be quasi-commutative if for all $\alpha,\beta\in PCon(A)$ there exists $\gamma\in K(A)$ such that $\gamma\subseteq [\alpha,\beta]$
and $\rho(\gamma)=\rho([\alpha,\beta])$.
\end{definitie}

The quasi-commutative algebras generalize the quasi-commutative rings, introduced by Belluce in \cite{Belluce}.

\begin{lema} The following are equivalent:
\usecounter{nr}
\begin{list}{(\arabic{nr})}{\usecounter{nr}}
\item $A$ is a quasi-commutative algebra;
\item For  all $\alpha,\beta\in K(A)$ there exists $\gamma\in K(A)$ such that $\gamma\subseteq [\alpha,\beta]$
and $\rho(\gamma)=\rho([\alpha,\beta])$.
\end{list}
\end{lema}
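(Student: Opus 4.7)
The implication (1) $\Rightarrow$ (2) is the only nontrivial direction, since $PCon(A)\subseteq K(A)$ gives (2) $\Rightarrow$ (1) immediately. For (1) $\Rightarrow$ (2) I would use the fact (recalled in Section~2) that every compact congruence is a finite join of principal congruences, together with the distributivity of the commutator over joins and the preservation properties of $\rho$ stated in Lemma~2.4.

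Concretely, fix $\alpha,\beta\in K(A)$ and write $\alpha=\alpha_1\lor\cdots\lor\alpha_m$, $\beta=\beta_1\lor\cdots\lor\beta_n$ with $\alpha_i,\beta_j\in PCon(A)$. By distributivity of $[\cdot,\cdot]$ over arbitrary joins,
\[
[\alpha,\beta]=\bigvee_{i,j}[\alpha_i,\beta_j].
\]
By hypothesis (1), for each pair $(i,j)$ there exists $\gamma_{i,j}\in K(A)$ with $\gamma_{i,j}\subseteq[\alpha_i,\beta_j]$ and $\rho(\gamma_{i,j})=\rho([\alpha_i,\beta_j])$. Set $\gamma:=\bigvee_{i,j}\gamma_{i,j}$. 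Since $K(A)$ is closed under finite joins, $\gamma\in K(A)$, and $\gamma\subseteq\bigvee_{i,j}[\alpha_i,\beta_j]=[\alpha,\beta]$.

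It remains to check $\rho(\gamma)=\rho([\alpha,\beta])$. Using the extension of Lemma~2.4(4) to arbitrary (in particular finite) joins, noted in the paragraph just after Lemma~2.4, we compute
\[
\rho(\gamma)=\rho\Bigl(\bigvee_{i,j}\gamma_{i,j}\Bigr)=\rho\Bigl(\bigvee_{i,j}\rho(\gamma_{i,j})\Bigr)=\rho\Bigl(\bigvee_{i,j}\rho([\alpha_i,\beta_j])\Bigr)=\rho\Bigl(\bigvee_{i,j}[\alpha_i,\beta_j]\Bigr)=\rho([\alpha,\beta]).
\]
This gives (2). The whole argument is routine; the only step worth pausing on is the justification that every compact congruence is a finite join of principal ones (standard for algebraic lattices) and that $K(A)$ is closed under finite joins, both already recorded in Section~2, so there is no real obstacle.
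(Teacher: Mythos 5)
Your argument is correct and coincides with the paper's own proof: both decompose $\alpha,\beta$ into finite joins of principal congruences, apply the hypothesis to each pair $(\alpha_i,\beta_j)$, set $\gamma=\bigvee_{i,j}\gamma_{i,j}$, and conclude via distributivity of the commutator over joins and the radical identity $\rho(\bigvee\theta_i)=\rho(\bigvee\rho(\theta_i))$. (Your version even silently corrects a typo in the paper, which writes $\gamma=\bigvee_{i,j}[\alpha_i,\beta_j]$ where $\bigvee_{i,j}\gamma_{ij}$ is clearly intended.)
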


\begin{proof}
$(1) \Rightarrow (2)$ Assume that $\alpha,\beta\in K(A)$ hence there exist two finite sets $I,J$ and two families $(\alpha_i)_{i\in I}$, $(\beta_j)_{j\in J}$ of principal congruences such that  $\alpha=\bigvee_{i\in I}\alpha_i$, $\beta=\bigvee_{j\in J}\beta_j$. By using the hypothesis $(1)$ for any pair $(\alpha_i,\beta_j)$, one can find a family $(\gamma_{ij})_{i\in I,j\in J}$ of compact congruences such that $\gamma_{ij}\subseteq [\alpha_i,\beta_j]$ and $\rho(\gamma_{ij})=\rho([\alpha_i,\beta_j])$, for all $i\in I$ and $j\in J$. Denoting $\gamma=\bigvee_{i\in I,j\in J}[\alpha_i,\beta_j]$, it follows that $\gamma\in K(A)$, $\gamma\subseteq[\alpha,\beta]$ and $\rho(\gamma)=\rho(\bigvee_{i\in I,j\in J}\gamma_{ij})$ = $\rho(\bigvee_{i\in I,j\in J}\rho(\gamma_{ij}))$ = $\rho(\bigvee_{i\in I,j\in J}\rho([\alpha_i,\beta_j]))$ = $\rho(\bigvee_{i\in I,j\in J}[\alpha_i,\beta_j])=\rho([\alpha,\beta])$.

$(2)\Rightarrow (1)$ Obviously.
\end{proof}

\begin{lema} Assume $A$ is a quasi-commutative algebra. If $\theta\in C(A)$ then there exists $\gamma\in K(A)$ such that $\gamma\subseteq\theta$ and $ \rho(\theta)=\rho(\gamma)$.
\end{lema}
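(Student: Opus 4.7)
The plan is to induct on the inductive definition of $C(A)$. The base case $\theta\in K(A)$ is trivial: take $\gamma=\theta$. So it suffices to handle the two closure clauses (join and commutator), using Lemma 2.4 to track radicals and Lemma 3.20 to cash in quasi-commutativity only at the commutator step.

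For the join step, suppose $\theta=\theta_1\vee\theta_2$ with $\theta_1,\theta_2\in C(A)$, and by the inductive hypothesis pick $\gamma_1,\gamma_2\in K(A)$ with $\gamma_i\subseteq\theta_i$ and $\rho(\gamma_i)=\rho(\theta_i)$. Set $\gamma=\gamma_1\vee\gamma_2$. Then $\gamma\in K(A)$ (since $K(A)$ is closed under finite joins), $\gamma\subseteq\theta$, and by the extension of Lemma 2.4(4) to arbitrary joins,
\[
\rho(\gamma)=\rho(\rho(\gamma_1)\vee\rho(\gamma_2))=\rho(\rho(\theta_1)\vee\rho(\theta_2))=\rho(\theta_1\vee\theta_2)=\rho(\theta).
\]

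For the commutator step, suppose $\theta=[\theta_1,\theta_2]$ with $\theta_1,\theta_2\in C(A)$, and again pick $\gamma_1,\gamma_2\in K(A)$ with $\gamma_i\subseteq\theta_i$ and $\rho(\gamma_i)=\rho(\theta_i)$. Applying Lemma 3.20(2) to $\gamma_1,\gamma_2\in K(A)$, there exists $\gamma\in K(A)$ with $\gamma\subseteq[\gamma_1,\gamma_2]$ and $\rho(\gamma)=\rho([\gamma_1,\gamma_2])$. Since the commutator is monotone in each argument, $\gamma\subseteq[\gamma_1,\gamma_2]\subseteq[\theta_1,\theta_2]=\theta$, and by Lemma 2.4(2),
\[
\rho(\gamma)=\rho([\gamma_1,\gamma_2])=\rho(\gamma_1)\wedge\rho(\gamma_2)=\rho(\theta_1)\wedge\rho(\theta_2)=\rho([\theta_1,\theta_2])=\rho(\theta).
\]

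There is no real obstacle here: the argument is a straightforward structural induction, and the only place the quasi-commutativity hypothesis is invoked is the commutator clause, where Lemma 3.20(2) upgrades a commutator of two compact congruences to a single compact congruence with the same radical. The technical point that makes the radicals match up in both inductive clauses is that $\rho$ is determined on joins and commutators by the radicals of the constituents (Lemma 2.4(2),(4)), which is precisely what lets the inductive hypothesis be applied independently to each argument.
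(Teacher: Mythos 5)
Your proof is correct and follows essentially the same structural induction as the paper, with identical handling of the join and commutator clauses. The only difference is cosmetic: in the base case $\theta\in K(A)$ you take $\gamma=\theta$ directly, whereas the paper applies Lemma 3.20 to the pair $\theta,\nabla_A$ and uses $[\theta,\nabla_A]=\theta$; your version is simpler and equally valid.
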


\begin{proof} Assume that $\theta\in C(A)$. We shall prove that $\theta$ fulfills the mentioned property by induction on the way in which the set $C(A)$ is defined :

$\bullet$ If $\theta\in K(A)$ then one applies Lemma 3.20 for $\theta,\nabla_A\in K(A)$: there exists $\gamma\in K(A)$ such that $\gamma\subseteq [\theta,\nabla_A]= \theta$
and $\rho(\gamma)=\rho([\theta,\nabla_A])= \rho(\theta)$.

$\bullet$ Assume that $\theta= \theta_1\lor \theta_2$ and $\theta_1,\theta_2\in C(A)$ satisfy the hypothesis of induction, so there exist the compact congruences $\gamma_1,\gamma_2$ such that $\gamma_i\subseteq \theta_i$ and $\rho(\gamma_i)= \rho(\theta_i)$, for $i=1,2$.
 Denoting $\gamma= \gamma_1\lor\gamma_2$ we get $\gamma\in K(A)$, $\gamma\subseteq \theta$ and $\rho(\gamma)= \rho(\gamma_1\lor \gamma_2)= \rho(\rho(\gamma_1)\lor \rho( \gamma_2))= \rho(\rho(\theta_1)\lor \rho(\theta_2))= \rho(\theta_1\lor \theta_2)=\rho(\theta)$.

$\bullet$ Assume that $\theta= [\theta_1,\theta_2]$ and $\theta_1,\theta_2\in C(A)$ satisfy the hypothesis of induction, so there exist the compact congruences $\gamma_1,\gamma_2$ such that $\gamma_i\subseteq \theta_i$ and $\rho(\gamma_i)= \rho(\theta_i)$, for $i=1,2$. Since $A$ is a quasi-commutative algebra, by applying Lemma 3.20 for $\gamma_1,\gamma_2\in K(A)$ one can find a compact element $\gamma$ such that $\gamma\subseteq [\gamma_1,\gamma_2]$ and $\rho(\gamma)= \rho( [\gamma_1,\gamma_2])$.

Thus $\gamma\subseteq [\gamma_1,\gamma_2]\subseteq [\theta_1,\theta_2]=\theta$ and $\rho(\gamma)= \rho( [\gamma_1,\gamma_2])= \rho(\gamma_1)\land \rho(\gamma_2) = \rho(\theta_1)\land \rho(\theta_2)= \rho( [\theta_1,\theta_2])= \rho(\theta)$, so the third step of induction is finished.

\end{proof}

\begin{propozitie} Assume $A$ is a quasi-commutative algebra. Then for each ideal $I$ of lattice $L(A)$ we have $I=(I_{\ast})^{\ast}$.
\end{propozitie}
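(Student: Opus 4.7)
The plan is to use the definitions of $(\cdot)^\ast$ and $(\cdot)_\ast$ together with Lemma 3.22, which is the key structural consequence of quasi-commutativity. One inclusion, namely $I \subseteq (I_\ast)^\ast$, is already furnished by Lemma 3.6 and does not need the quasi-commutativity hypothesis. So the entire content of the proposition is the reverse inclusion $(I_\ast)^\ast \subseteq I$, and this is where quasi-commutativity must enter.

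For the reverse inclusion, I would start with an arbitrary $x \in (I_\ast)^\ast$. By the definition of $(\cdot)^\ast$, there exists some $\alpha \in C(A)$ with $\alpha \subseteq I_\ast$ and $x = \lambda_A(\alpha)$. The obstacle is that $\alpha$ need not be compact, so I cannot invoke Lemma 3.5 directly to place $\lambda_A(\alpha)$ in $I$. This is precisely the gap that Lemma 3.22 fills: since $A$ is quasi-commutative, I can extract a compact $\gamma \in K(A)$ with $\gamma \subseteq \alpha$ and $\rho(\gamma) = \rho(\alpha)$. The radical equality forces $\lambda_A(\gamma) = \lambda_A(\alpha) = x$ (by the characterization of $\lambda_A$ on page following Lemma 3.2), while $\gamma \subseteq \alpha \subseteq I_\ast$ together with $\gamma$ being compact allows Lemma 3.5 to apply, yielding $\lambda_A(\gamma) \in I$. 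Thus $x \in I$.

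The main obstacle, as noted, is really packaged into Lemma 3.22, whose proof by induction on the construction of $C(A)$ already absorbs the combinatorial content of quasi-commutativity (the closure of the existence of a compact lower bound with the same radical under joins and commutators). Once that lemma is in hand, the proof of Proposition 3.23 is essentially a one-line reduction from an arbitrary $\alpha \in C(A)$ to a compact $\gamma \in K(A)$ with $\lambda_A(\gamma)= \lambda_A(\alpha)$. The conclusion $I = (I_\ast)^\ast$ then follows by combining the two inclusions, and by Proposition 3.12 this immediately gives the other equivalent property, so that $L(A)$ genuinely serves as the reticulation of $A$ via the homeomorphism of Proposition 3.16.
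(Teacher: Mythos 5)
Your proposal is correct and follows essentially the same route as the paper: reduce to the inclusion $(I_\ast)^\ast\subseteq I$ via Lemma 3.6, then for $x=\lambda_A(\theta)$ with $\theta\in C(A)$, $\theta\subseteq I_\ast$, use the quasi-commutativity lemma (Lemma 3.21 in the paper's numbering) to replace $\theta$ by a compact $\gamma\subseteq\theta$ with $\rho(\gamma)=\rho(\theta)$, and conclude with Lemma 3.5. Only your cross-references are shifted by one from the paper's numbering.
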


\begin{proof} According to Lemma 3.6, it suffices to prove that $(I_{\ast})^{\ast}\subseteq I$. Assume that $x\in(I_{\ast})^{\ast}$, so $x=\lambda_A(\theta)$, for some $\theta\in C(A)$ such that $\theta\subseteq I_{\ast}$.

By using Lemma 3.21 we find a compact congruence $\gamma$ such that $\gamma\subseteq\theta$ and $\rho(\gamma)=\rho(\theta)$, hence $\lambda_A(\gamma)=\lambda_A(\theta)$. According to Lemma 3.5, $\gamma\subseteq\theta\subseteq I_{\ast}$ implies $\lambda_A(\gamma)\in I$, therefore $x=\lambda_A(\theta)=\lambda_A(\gamma)\in I$.
\end{proof}

\begin{lema}
 Assume that for each ideal of lattice $L(A)$ we have $I=(I_{\ast})^{\ast}$.  Then $V(\theta)=V((\lambda_A(\theta)]_{\ast})$, for each $\theta\in C(A)$.
\end{lema}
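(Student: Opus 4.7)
The plan is to reduce the claimed equality to the statement $V(\theta) = V((\theta^{\ast})_{\ast})$, using Lemma 3.4. Indeed, since $\theta \in C(A)$, Lemma 3.4 gives $\theta^{\ast} = (\lambda_A(\theta)]$, so $(\lambda_A(\theta)]_{\ast} = (\theta^{\ast})_{\ast}$, and it suffices to verify the two inclusions $V(\theta) \subseteq V((\theta^{\ast})_{\ast})$ and $V((\theta^{\ast})_{\ast}) \subseteq V(\theta)$.

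The inclusion $V((\theta^{\ast})_{\ast}) \subseteq V(\theta)$ is immediate from Lemma 3.6, which provides $\theta \subseteq (\theta^{\ast})_{\ast}$: any prime congruence containing $(\theta^{\ast})_{\ast}$ then automatically contains $\theta$.

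For the reverse inclusion $V(\theta) \subseteq V((\theta^{\ast})_{\ast})$, I would take a prime congruence $\phi$ with $\theta \subseteq \phi$. Monotonicity of the assignment $(\cdot)^{\ast}\colon Con(A) \to Id(L(A))$ yields $\theta^{\ast} \subseteq \phi^{\ast}$, and monotonicity of $(\cdot)_{\ast}\colon Id(L(A)) \to Con(A)$ then gives $(\theta^{\ast})_{\ast} \subseteq (\phi^{\ast})_{\ast}$. At this point I invoke Lemma 3.7, which asserts $(\phi^{\ast})_{\ast} = \phi$ for every $\phi \in Spec(A)$, and conclude $(\theta^{\ast})_{\ast} \subseteq \phi$, i.e., $\phi \in V((\theta^{\ast})_{\ast})$.

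I expect the only subtle point to be noticing that the standing hypothesis $I=(I_{\ast})^{\ast}$ stated in the lemma is not actually needed: the argument rests solely on the general monotonicity and the unconditional identities of Lemmas 3.4, 3.6, and 3.7. The stated assumption is presumably carried along because the lemma is used inside the body of results developed under the equivalent conditions of Proposition 3.12.
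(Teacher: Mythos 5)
Your proof is correct and follows essentially the same route as the paper's: both inclusions are obtained from Lemma 3.4 together with $\theta\subseteq(\theta^{\ast})_{\ast}$ (Lemma 3.6) for one direction and $(\phi^{\ast})_{\ast}=\phi$ (Lemma 3.7) plus monotonicity for the other. Your observation that the standing hypothesis $I=(I_{\ast})^{\ast}$ is never used is accurate — the paper's own proof does not invoke it either.
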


\begin{proof}
 Let $\phi$ be an arbitrary prime congruence of $A$. If $\theta\subseteq \phi$ then by using Lemmas 3.4 and 3.7 we get: $(\lambda_A(\theta)]_{\ast}= (\theta^{\ast})_{\ast}\subseteq (\phi^{\ast})_{\ast}=\phi$. It follows that $V(\theta)\subseteq V((\lambda_A(\theta)]_{\ast})$.

Conversely, assume that $(\lambda_A(\theta)]_{\ast}\subseteq \phi$, therefore, by using Lemmas 3.6 and 3.4 we obtain: $\theta\subseteq (\theta^{\ast})_{\ast}= (\lambda_A(\theta)]_{\ast} \subseteq \phi$. It follows that $V((\lambda_A(\theta)]_{\ast})\subseteq V(\theta)$.
\end{proof}

The spectral algebras, introduced by the following definition, generalize the spectral rings, defined by Belluce in \cite{Belluce}.

\begin{definitie} The algebra $A\in \mathcal{V}$ is said to be a spectral algebra if the following conditions are fulfilled:
\usecounter{nr}
\begin{list}{(\arabic{nr})}{\usecounter{nr}}
\item $Spec_Z(A)$ is a spectral space;
\item For any compact congruence $\alpha$,  $D(\alpha)$ is a compact subset of $Spec_Z(A)$ .

\end{list}
\end{definitie}

\begin{lema} If $A$ is a spectral algebra then for each $\theta\in C(A)$, $D(\theta)$ is a compact subset of $Spec_Z(A)$.
\end{lema}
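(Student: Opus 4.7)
The plan is to proceed by structural induction on the inductive definition of $C(A)$. Recall that $C(A)$ is built from $K(A)$ by iterated application of binary join and commutator, so to show that $D(\theta)$ is compact for every $\theta \in C(A)$ it suffices to (a) handle the base case $\theta \in K(A)$, (b) show that compactness of $D(\theta_1)$ and $D(\theta_2)$ implies compactness of $D(\theta_1 \lor \theta_2)$, and (c) show that the same hypothesis implies compactness of $D([\theta_1,\theta_2])$.

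The base case is immediate from clause (2) of Definition 3.24. For the join step, I would invoke the identity $D(\theta_1 \lor \theta_2) = D(\theta_1) \cup D(\theta_2)$ recalled in the preliminaries: a finite union of compact subsets of any topological space is compact, so compactness propagates through finite joins. For the commutator step, I would use the identity $D([\theta_1,\theta_2]) = D(\theta_1) \cap D(\theta_2)$. This is where condition (1) of spectral algebra is essential: $D(\theta_1)$ and $D(\theta_2)$ are open by construction of the Zariski topology on $Spec_Z(A)$, and they are compact by the inductive hypothesis, hence both are compact open in the spectral space $Spec_Z(A)$. Since the compact open subsets of a spectral space are closed under finite intersections by definition, their intersection $D([\theta_1,\theta_2])$ is again compact open, and in particular compact.

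There is no real obstacle here; the content of the lemma is a clean induction once one notices that the two generating operations on $C(A)$ correspond under the map $\theta \mapsto D(\theta)$ to the two basic operations (union and intersection) under which compact open sets of a spectral space are closed. The only point that deserves emphasis in the write-up is that the commutator step genuinely requires the spectral-space hypothesis (1), not just hypothesis (2): without knowing that compact opens are stable under finite intersection, compactness of $D(\theta_1) \cap D(\theta_2)$ could fail even when $D(\theta_1)$ and $D(\theta_2)$ are individually compact open.
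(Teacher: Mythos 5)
Your proof is correct and follows essentially the same route as the paper: the same structural induction on the generation of $C(A)$, the same identities $D(\theta_1\lor\theta_2)=D(\theta_1)\cup D(\theta_2)$ and $D([\theta_1,\theta_2])=D(\theta_1)\cap D(\theta_2)$, and the same appeal to the closure of compact open sets under finite intersections in a spectral space for the commutator step. No gaps.
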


\begin{proof} Assume that $A$ is a spectral algebra so $Spec_Z(A)$ is a spectral space. Thus the family  $\mathcal{B}$ of compact open sets is a basis for  $Spec_Z(A)$ which is closed under finite intersections. Suppose that $\theta\in C(A)$. We shall prove that $D(\theta)$ is compact by induction on how $C(A)$ is defined:

$\bullet$ Assume that $\theta\in K(A)$. According to the condition $(2)$ of Definition 3.24, $D(\theta)$ is compact.

$\bullet$ Assume that $\theta= \theta_1\lor \theta_2$ and $D(\theta_1), D(\theta_2)$ are compact subsets of $Spec_Z(A)$. Then $D(\theta)=D(\theta_1)\cup D(\theta_2)$ is compact.

$\bullet$  Assume that $\theta= [\theta_1, \theta_2]$ and $D(\theta_1), D(\theta_2)$ are compact subsets of $Spec_Z(A)$. Then $D(\theta)=D(\theta_1)\cap D(\theta_2)$ is compact (because $\mathcal{B}$ is closed under finite intersections).
\end{proof}

The following result extends Theorem 4 of \cite{Belluce} to a universal algebra framework.

\begin{teorema} The following are equivalent:
\usecounter{nr}
\begin{list}{(\arabic{nr})}{\usecounter{nr}}
\item $A$ is a spectral algebra;
\item $A$ is a quasi-commutative algebra;
\item For any ideal $I$ of $L(A)$, we have $I=(I_{\ast})^{\ast}$.
\end{list}
\end{teorema}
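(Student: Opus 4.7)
The plan is to prove the cycle $(2)\Rightarrow(3)\Rightarrow(1)\Rightarrow(2)$. The implication $(2)\Rightarrow(3)$ is already Proposition 3.22, so nothing remains for that step. For $(3)\Rightarrow(1)$, I would invoke Corollary 3.17 directly: under hypothesis $(3)$, it asserts both that $Spec_Z(A)$ is a spectral space and that $(D(\alpha))_{\alpha\in C(A)}$ is a basis of compact open sets. Since $K(A)\subseteq C(A)$, this in particular forces $D(\alpha)$ to be compact for every $\alpha\in K(A)$, which is exactly condition $(2)$ of Definition 3.24.

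The real content is $(1)\Rightarrow(2)$. Assume $A$ is a spectral algebra and fix $\alpha,\beta\in K(A)$; by Lemma 3.20 it suffices to produce a compact congruence $\gamma$ with $\gamma\subseteq[\alpha,\beta]$ and $\rho(\gamma)=\rho([\alpha,\beta])$. By the spectral hypothesis, $D(\alpha)$ and $D(\beta)$ are compact open subsets of $Spec_Z(A)$, and since in any spectral space the compact open sets form a sublattice of the open sets, $D(\alpha)\cap D(\beta)=D([\alpha,\beta])$ is again compact open. Next, because $Con(A)$ is algebraic, one can write $[\alpha,\beta]=\bigvee\{\delta\in K(A)\mid\delta\subseteq[\alpha,\beta]\}$, and therefore $D([\alpha,\beta])=\bigcup\{D(\delta)\mid\delta\in K(A),\ \delta\subseteq[\alpha,\beta]\}$. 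Compactness of $D([\alpha,\beta])$ furnishes finitely many $\delta_1,\dots,\delta_n\in K(A)$ with $\delta_i\subseteq[\alpha,\beta]$ whose union already covers; setting $\gamma=\delta_1\vee\cdots\vee\delta_n$ one obtains $\gamma\in K(A)$, $\gamma\subseteq[\alpha,\beta]$, and $D(\gamma)=D([\alpha,\beta])$. Consequently $V(\gamma)=V([\alpha,\beta])$, and since $\rho(\theta)=\bigwedge V(\theta)$ for any congruence $\theta$, we conclude $\rho(\gamma)=\rho([\alpha,\beta])$, which is the required quasi-commutativity.

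The main obstacle is the $(1)\Rightarrow(2)$ step: the subtlety is that $[\alpha,\beta]$ need not itself be compact (this is precisely where hypothesis $(H)$ would have simplified things), so one cannot directly extract $\gamma$ from $[\alpha,\beta]$ as a compact generator. The spectral hypothesis bypasses this obstacle by giving compactness of $D([\alpha,\beta])$ as a \emph{topological} fact (stability of compact opens under binary intersection), after which the algebraicity of $Con(A)$ converts a finite open subcover back into a single compact congruence $\gamma$ with the required radical.
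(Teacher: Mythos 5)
Your proposal is correct and follows essentially the same route as the paper: the $(1)\Rightarrow(2)$ step is the paper's argument verbatim (compactness of $D([\alpha,\beta])=D(\alpha)\cap D(\beta)$, a finite subcover from the algebraicity of $Con(A)$, then Lemma 3.20), and $(2)\Rightarrow(3)$ is Proposition 3.22 in both. The only cosmetic difference is that for $(3)\Rightarrow(1)$ you cite Corollary 3.17 directly, whereas the paper re-derives its content via Lemmas 3.11(1) and 3.23; this is a legitimate and slightly tidier packaging of the same argument.
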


\begin{proof}
$(1)\Rightarrow (2)$ Assume that $A$ is a spectral algebra. Then $Spec_Z(A)$ is a spectral space, so  the family  $\mathcal{B}$ of compact open sets is a basis for  $Spec_Z(A)$ which is closed under finite intersections. In order to prove that $A$ is a quasi-commutative algebra let us consider two compact congruences $\alpha,\beta$ of $A$. By hypothesis, $D([\alpha,\beta])= D(\alpha)\cap D(\beta)\in \mathcal{B}$, so $D([\alpha,\beta])$ is a compact subset of $Spec_Z(A)$. Since $[\alpha,\beta]= \bigvee\{\gamma\in K(A)|\gamma\leq [\alpha,\beta]\}$, the following equality holds:

$D([\alpha,\beta])= \bigcup\{D(\gamma)|\gamma\in K(A),\gamma\leq [\alpha,\beta]\}$.

Thus there exist the compact congruences $\gamma_1,\cdots,\gamma_n$ such that $\gamma_i\subseteq [\alpha,\beta]$, for all $i=1,\cdots,n$ and $D([\alpha,\beta])=\bigcup_{i=1}^nD(\gamma_i)= D(\bigvee_{i=1}^n\gamma_i)$.

We set $\gamma= \bigvee_{i=1}^n\gamma_i$, hence $\gamma\in K(A)$, $\gamma\subseteq [\alpha,\beta]$ and $D([\alpha,\beta])= D(\gamma)$. By using the last equality we get $\rho([\alpha,\beta])=\bigcap V([\alpha,\beta])= \bigcap V(\gamma)= \rho(\gamma)$. According to Lemma 3.20, $A$ is a quasi-commutative algebra.

$(2)\Rightarrow (3)$ We apply Proposition 3.22.

$(3)\Rightarrow (1)$ According to Proposition 3.16, one can consider the homeomorphism $u:Spec_Z(A)\rightarrow Spec_{Id,Z}(L(A))$, defined by $u(\phi)=\phi^{\ast}$, for all $\phi \in Spec(A)$. We know from the Stone duality theory of bounded distributive lattices that $\{D_{Id}(\lambda_A(\theta))|\theta\in C(A)\}$ is the basis of compact open sets for the spectral space $Spec_{Id,Z}(L(A))$ and this basis is closed under finite intersections. Thus $\mathcal{B}= \{ u^{-1}(D_{Id}(\lambda_A(\theta)))|\theta\in C(A)\}$ is the basis of compact open sets for the spectral space $Spec_Z(A)$ and it is closed under finite intersections.

Let us consider a congruence $\theta\in C(A)$. By applying Lemmas 3.11(1) and 3.23 for the lattice ideal $I=(\lambda_A(\theta)]$, we get $u^{-1}(D_{Id}(\lambda_A(\theta)))= D((\lambda_A(\theta)]_{\ast})= D(\theta)$, therefore $D(\theta)\in \mathcal{B}$. We conclude that $A$ is a spectral algebra.
\end{proof}

According to Proposition 3.14, if $A\in \mathcal{V}$ is quasi-commutative then $Spec_Z(A)$ and $Spec_{Id,Z}(L(A))$ are homeomorphic spaces.
Then the quasi-commutative algebras constitute a suitable class of algebras for defining a notion of reticulation with remarkable transfer properties. In conclusion, for any quasi-commutative algebra $A\in \mathcal{V}$, the bounded distributive lattice $L(A)$ will be called the reticulation of $A$.

\begin{remarca} Assume that $A\in \mathcal{V}$ is an algebra with the property that $K(A)$ is closed under the commutator operation. Then $C(A)= K(A)$ so $A$ is a quasi-commutative algebra: if $\alpha,\beta\in K(A)$ then, by taking $\gamma= [\alpha,\beta]\in K(A)$, it follows that $\gamma\subseteq [\alpha,\beta]$
and $\rho(\gamma)=\rho([\alpha,\beta])$.
\end{remarca}

Following \cite{Kaplansky}, p. 73, a ring $R$ is said to be neo-commutative if the product of two finitely generated ideals is finitely generated. Then the algebras $A\in \mathcal{V}$ with the property that $K(A)$ is closed under commutator operation generalize the neo-commutative rings. Kaplansky proved in  \cite{Kaplansky} that any neo-commutative ring is spectral. Thus Remark 3.27 generalizes the Kaplansky result to the algebras of variety $\mathcal{V}$.

\begin{propozitie} The following algebras of the variety $\mathcal{V}$ are quasi-commutative:
\usecounter{nr}
\begin{list}{(\arabic{nr})}{\usecounter{nr}}
\item algebras in which any congruence is compact;
\item algebras with finitely many prime congruences.
\end{list}
\end{propozitie}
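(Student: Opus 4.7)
The plan is to verify the defining condition of quasi-commutativity via its compact reformulation given in Lemma 3.20: in both cases, it suffices to show that for all $\alpha,\beta\in K(A)$ there exists $\gamma\in K(A)$ with $\gamma\subseteq[\alpha,\beta]$ and $\rho(\gamma)=\rho([\alpha,\beta])$.

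Part (1) is essentially immediate. If every congruence of $A$ is compact, then $[\alpha,\beta]\in K(A)$, and taking $\gamma=[\alpha,\beta]$ trivially satisfies both conditions.

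For part (2), fix $\alpha,\beta\in K(A)$ and enumerate the prime congruences not containing $[\alpha,\beta]$ as $\phi_1,\dots,\phi_n$ (finite by hypothesis). Since $Con(A)$ is an algebraic lattice, $[\alpha,\beta]$ is the join of the compact congruences it contains. Because each $\phi_j$ is a congruence (hence closed under the join of a family of sub-congruences), the fact that $[\alpha,\beta]\not\subseteq\phi_j$ forces the existence of some compact $\gamma_j\subseteq[\alpha,\beta]$ with $\gamma_j\not\subseteq\phi_j$. I then set $\gamma=\gamma_1\vee\cdots\vee\gamma_n\in K(A)$. Clearly $\gamma\subseteq[\alpha,\beta]$, which by monotonicity of $\rho$ yields $\rho(\gamma)\subseteq\rho([\alpha,\beta])$. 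For the reverse inclusion, let $\phi\in Spec(A)$ with $\gamma\subseteq\phi$; then each $\gamma_j\subseteq\phi$, so $\phi$ cannot equal any $\phi_j$, and since $\{\phi_1,\dots,\phi_n\}$ exhausts the primes missing $[\alpha,\beta]$, we must have $[\alpha,\beta]\subseteq\phi$. Thus $V(\gamma)\subseteq V([\alpha,\beta])$ and hence $\rho([\alpha,\beta])=\bigcap V([\alpha,\beta])\subseteq\bigcap V(\gamma)=\rho(\gamma)$. A small degenerate case arises when no prime avoids $[\alpha,\beta]$: then $\rho([\alpha,\beta])=\rho(\Delta_A)$ and one simply picks $\gamma=\Delta_A\in K(A)$.

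The only genuine obstacle is the extraction of the compact witnesses $\gamma_j$; everything else is bookkeeping over the finite set $\{\phi_1,\dots,\phi_n\}$. This extraction rests on two structural facts: that $Con(A)$ is algebraic with compact elements $K(A)$, and that an ordinary congruence (in contrast to a prime ideal in a distributive lattice, say) is automatically closed under the join of any family of sub-congruences. Once this is clarified, the equality of radicals follows from comparing the two variety sets $V(\gamma)$ and $V([\alpha,\beta])$ inside the finite spectrum.
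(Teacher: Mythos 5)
Your proof is correct, but for part (2) it follows a genuinely different route from the paper's. For part (1) the two arguments coincide: when $Con(A)=K(A)$ one has $C(A)=K(A)$ and $\gamma=[\alpha,\beta]$ is its own compact witness (the paper phrases this via Remark 3.27). For part (2), the paper observes that a finite spectrum makes every subset of $Spec_Z(A)$ compact, so $A$ is a spectral algebra, and then invokes the implication (1)$\Rightarrow$(2) of Theorem 3.26; your argument instead builds the compact witness $\gamma$ by hand. Your key step --- for each of the finitely many primes $\phi_j$ with $[\alpha,\beta]\not\subseteq\phi_j$, extract a compact $\gamma_j\subseteq[\alpha,\beta]$ with $\gamma_j\not\subseteq\phi_j$, using that $[\alpha,\beta]$ is the join of the compact congruences below it and that $\phi_j$ is an upper bound of any family it contains --- is sound, the finite join $\gamma=\gamma_1\vee\cdots\vee\gamma_n$ is compact, and the comparison $V(\gamma)=V([\alpha,\beta])$ gives equality of radicals; the degenerate case $V([\alpha,\beta])=Spec(A)$ is correctly handled by $\gamma=\Delta_A$. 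What each approach buys: yours is self-contained and elementary, relying only on algebraicity of $Con(A)$ and Lemma 3.20, and in particular does not presuppose the full equivalence machinery of Theorem 3.26 (Propositions 3.12, 3.16, 3.22 and Lemma 3.23); the paper's is a two-line corollary once that theorem is in place, and makes visible that finiteness of the spectrum is really a statement about compactness of the sets $D(\theta)$.
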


\begin{proof} $(1)$ If $A$ is an algebra of $\mathcal{V}$ such that $Con(A)=K(A)$ then $K(A)=C(A)= Con(A)$. By Remark 3.27, $A$ is quasi-commutative.

$(2)$  Let $A$ be an algebra of $\mathcal{V}$ such that $Spec(A)$ is finite. Then $D(\theta)$ is compact, for each $\theta\in Con(A)$, so $A$ is a spectral algebra. By applying Theorem 3.26, it follows that $A$ is quasi-commutative.
\end{proof}

\section{Boolean elements and annihilators versus reticulation}

\hspace{0.5cm} Let $\mathcal{V}$ be a semidegenerate congruence-modular variety and $A$ an algebra of $\mathcal{V}$.

Let $B(Con(A))$ be the Boolean algebra of complemented congruences in the algebra $A$ (cf. Lemma 4 from \cite{Jipsen}). For each $\alpha\in B(Con(A))$, the annihilator $\alpha^{\perp}$ is the complement of $\alpha$. A congruence $\alpha$ of $A$ is complemented if and only if $\alpha\lor \alpha^{\perp} = \nabla_A$. If $\theta,\vartheta\in Con(A)$ and $\alpha\in B(Con(A))$ then $\theta\land \alpha =[\theta,\alpha]$, $\alpha\rightarrow \theta  = \alpha^{\perp}\lor \theta$ and $(\theta\land\vartheta)\lor \alpha = (\theta\lor \alpha)\land (\vartheta\lor \alpha)$.

\begin{lema}\cite{GM2}
 For all congruences $\theta, \vartheta\in Con(A)$ the following hold:
\usecounter{nr}
\begin{list}{(\arabic{nr})}{\usecounter{nr}}
\item If $\theta\lor \vartheta = \nabla_A$ and $[\theta,\vartheta] = \Delta_A$ then $\theta,\vartheta\in B(Con(A))$;
\item $B(Con(A))\subseteq K(A)$.

\end{list}
\end{lema}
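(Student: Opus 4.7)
The plan is to handle the two parts in turn, and then use part (1) as the main engine for part (2).

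For part (1), the natural move is to guess that $\theta$ and $\vartheta$ are each other's complements. The hypothesis already gives the join $\theta\lor\vartheta=\nabla_A$, so only $\theta\cap\vartheta=\Delta_A$ needs verification. I would invoke Lemma 2.3(1), which says that $\theta\lor\vartheta=\nabla_A$ forces $[\theta,\vartheta]=\theta\cap\vartheta$; combined with the standing assumption $[\theta,\vartheta]=\Delta_A$, this yields $\theta\cap\vartheta=\Delta_A$. Hence each of $\theta,\vartheta$ is a complement of the other in $Con(A)$, so both lie in $B(Con(A))$.

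For part (2), the strategy is to produce, inside each complemented congruence, a compact congruence that fills it up. Fix $\alpha\in B(Con(A))$ with complement $\alpha^{\perp}$, so $\alpha\lor\alpha^{\perp}=\nabla_A$ and by (2.1) $[\alpha,\alpha^{\perp}]\subseteq\alpha\cap\alpha^{\perp}=\Delta_A$. Because $\mathcal{V}$ is semidegenerate, $\nabla_A$ is compact. Expanding $\alpha$ and $\alpha^{\perp}$ as directed joins of their compact subcongruences and using compactness of $\nabla_A$ plus closure of $K(A)$ under finite joins, I can extract $\gamma,\delta\in K(A)$ with $\gamma\subseteq\alpha$, $\delta\subseteq\alpha^{\perp}$, and $\gamma\lor\delta=\nabla_A$. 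Since $[\gamma,\delta]\subseteq[\alpha,\alpha^{\perp}]=\Delta_A$, part (1) applies to $\gamma$ and $\delta$, so they are complements of one another.

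To finish, I would invoke modularity of $Con(A)$ in the form $\alpha=\alpha\cap\nabla_A=\alpha\cap(\gamma\lor\delta)=\gamma\lor(\alpha\cap\delta)$, valid because $\gamma\subseteq\alpha$; then $\alpha\cap\delta\subseteq\alpha\cap\alpha^{\perp}=\Delta_A$ collapses this to $\alpha=\gamma\in K(A)$.

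The only delicate point I anticipate is the extraction of the compact witnesses $\gamma,\delta$: one has to be careful that the decomposition of $\nabla_A$ as a join of compacts $\gamma_i\subseteq\alpha$ and $\delta_j\subseteq\alpha^{\perp}$ really uses both compactness of $\nabla_A$ (the semidegeneracy input) and the algebraicity of $Con(A)$. Once those are in hand, the modular-law computation is immediate, and part (1) is essentially a one-line application of Lemma 2.3(1).
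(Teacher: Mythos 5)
The paper does not actually prove this lemma: it is quoted from \cite{GM2} with no argument supplied, so there is no internal proof to compare against. Your proof is correct and is the standard one. Part (1) is indeed a one-line consequence of Lemma 2.3(1) together with the hypothesis $[\theta,\vartheta]=\Delta_A$. In part (2), the extraction of compact witnesses is sound: writing $\alpha$ and $\alpha^{\perp}$ as joins of their compact subcongruences (algebraicity of $Con(A)$) and using compactness of $\nabla_A$ (the semidegeneracy input) plus closure of $K(A)$ under finite joins yields $\gamma,\delta\in K(A)$ with $\gamma\subseteq\alpha$, $\delta\subseteq\alpha^{\perp}$, $\gamma\lor\delta=\nabla_A$; the modular-law computation $\alpha=\alpha\cap(\gamma\lor\delta)=\gamma\lor(\alpha\cap\delta)=\gamma$ then finishes. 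One small remark: the detour through part (1) to show that $\gamma$ and $\delta$ are complements is superfluous --- the final computation uses only $\gamma\lor\delta=\nabla_A$ and $\alpha\cap\delta\subseteq\alpha\cap\alpha^{\perp}=\Delta_A$, so you could delete that paragraph without loss.
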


According to the previous lemma, $B(Con(A))\subseteq C(A)$. Recall from \cite{Birkhoff} that the Boolean center of a bounded distributive lattice $L$ is the Boolean algebra $B(L)$ of the complemented elements in $L$. Similarly, we shall say that $B(Con(A))$ is the Boolean center of $Con(A)$.

Firstly, we shall establish a relationship between the Boolean algebra $B(Con(A))$ and $B(L(A))$.

\begin{lema} If $\alpha\in B(Con(A))$ then $\lambda_A(\alpha)\in B(L(A))$.
\end{lema}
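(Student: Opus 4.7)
The plan is to show directly that $\lambda_A(\alpha^{\perp})$ serves as a complement of $\lambda_A(\alpha)$ in the bounded distributive lattice $L(A)$. To apply the lattice homomorphism properties of $\lambda_A$ recorded in Lemma 3.2, I first need to know that both $\alpha$ and $\alpha^{\perp}$ belong to the domain $C(A)$. This is immediate from Lemma 4.1(2): every complemented congruence lies in $K(A)$, and $K(A)\subseteq C(A)$ by the defining rules of $C(A)$.

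Next I would unpack what it means for $\alpha\in B(Con(A))$. By the discussion preceding Lemma 4.1, the annihilator $\alpha^{\perp}$ is the complement of $\alpha$, so $\alpha\lor\alpha^{\perp}=\nabla_A$. Combined with Lemma 2.3(1), the join condition forces $[\alpha,\alpha^{\perp}]=\alpha\cap\alpha^{\perp}$, and since $\alpha^{\perp}$ is a complement in $Con(A)$ we in fact have $\alpha\cap\alpha^{\perp}=\Delta_A$, hence $[\alpha,\alpha^{\perp}]=\Delta_A$.

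Now I would transfer both equalities through $\lambda_A$ using Lemma 3.2. The join equation yields
\[
\lambda_A(\alpha)\lor \lambda_A(\alpha^{\perp}) \;=\; \lambda_A(\alpha\lor\alpha^{\perp}) \;=\; \lambda_A(\nabla_A) \;=\; 1,
\]
by Lemma 3.2(1), together with the definition $1=\hat{\nabla_A}$. The commutator equation yields
\[
\lambda_A(\alpha)\land \lambda_A(\alpha^{\perp}) \;=\; \lambda_A([\alpha,\alpha^{\perp}]) \;=\; \lambda_A(\Delta_A) \;=\; 0,
\]
by Lemma 3.2(2) and the definition $0=\hat{\Delta_A}$. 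Together these two identities say exactly that $\lambda_A(\alpha)$ is complemented in $L(A)$, with complement $\lambda_A(\alpha^{\perp})$, so $\lambda_A(\alpha)\in B(L(A))$.

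There is no real obstacle in this proof: it is a one-line consequence of the bookkeeping provided by Lemmas 2.3(1), 3.2, and 4.1(2). The only point that requires any attention is making sure $\alpha^{\perp}$ lies in $C(A)$ so that Lemma 3.2 is applicable; this is handled by the inclusion $B(Con(A))\subseteq K(A)\subseteq C(A)$.
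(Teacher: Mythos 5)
Your proposal is correct and follows essentially the same route as the paper: exhibit a complement of $\lambda_A(\alpha)$ by pushing the equations $\alpha\lor\alpha^{\perp}=\nabla_A$ and $[\alpha,\alpha^{\perp}]=\Delta_A$ through $\lambda_A$ via Lemma 3.2(1),(2). The paper phrases this with an unnamed complement $\beta\in B(Con(A))$ and leaves the membership in $C(A)$ implicit, whereas you make both points explicit; this is only a difference in bookkeeping, not in substance.
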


\begin{proof} If $\alpha\in B(Con(A))$ then $\alpha\lor \beta= \nabla_A$ and  $\alpha\cap \beta= \Delta_A$, for some $\beta\in B(Con(A))$. By Lemma 3.2(1) and (2) we get $\lambda_A(\alpha)\lor \lambda_A(\beta)= 1$ and $\lambda_A(\alpha)\land \lambda_A(\beta)= 0$, so $\lambda_A(\alpha)\in B(L(A))$.
\end{proof}

Lemma 4.2 allows us to consider the function

$\lambda_A|_{B(Con(A))}:B(Con(A))\rightarrow B(L(A))$.

\begin{lema} $\lambda_A|_{B(Con(A))}: B(Con(A))\rightarrow B(L(A))$ is an injective Boolean morphism.
\end{lema}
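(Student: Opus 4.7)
The plan is to prove the lemma in two steps: first, that $\lambda_A|_{B(Con(A))}$ is a bounded lattice morphism from the Boolean algebra $B(Con(A))$ into the Boolean algebra $B(L(A))$ (which then automatically makes it a Boolean morphism, since complements in a Boolean algebra are unique); second, that it is injective.

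For the morphism part, preservation of $0 = \hat{\Delta_A}$ and $1 = \hat{\nabla_A}$ is immediate from the definitions, and preservation of joins is Lemma 3.2(1). The slightly subtler point is preservation of meets. Here I would exploit the identity $\theta \land \alpha = [\theta,\alpha]$ (recalled in the paragraph preceding Lemma 4.1 for $\alpha \in B(Con(A))$): for $\alpha,\beta \in B(Con(A))$ this gives $\alpha \cap \beta = [\alpha,\beta]$, whence Lemma 3.2(2) yields $\lambda_A(\alpha \cap \beta) = \lambda_A([\alpha,\beta]) = \lambda_A(\alpha) \land \lambda_A(\beta)$.

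For injectivity, suppose $\alpha,\beta \in B(Con(A))$ satisfy $\lambda_A(\alpha) = \lambda_A(\beta)$. I would form the symmetric difference $\gamma = (\alpha \cap \beta^{\perp}) \lor (\alpha^{\perp} \cap \beta)$, which again lies in $B(Con(A))$. By the first step $\lambda_A(\gamma)$ is the symmetric difference of $\lambda_A(\alpha)$ and $\lambda_A(\beta)$ in $B(L(A))$, hence $\lambda_A(\gamma) = 0$, i.e., $\rho(\gamma) = \rho(\Delta_A)$. Since $\gamma \lor \gamma^{\perp} = \nabla_A$, Lemma 2.4(6) gives $\rho(\gamma) \lor \rho(\gamma^{\perp}) = \nabla_A$; combining this with the trivial inclusion $\rho(\Delta_A) \subseteq \rho(\gamma^{\perp})$, we get $\rho(\gamma^{\perp}) = \nabla_A$, and Lemma 2.4(3) then yields $\gamma^{\perp} = \nabla_A$, so $\gamma = \Delta_A$ and therefore $\alpha = \beta$.

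The main obstacle is the injectivity step, where one must pass from equality of radicals in $L(A)$ back to actual equality in $B(Con(A))$. The symmetric-difference trick reduces the problem to the special case of showing that $\rho(\gamma) = \rho(\Delta_A)$ forces $\gamma = \Delta_A$ whenever $\gamma$ is complemented, which is exactly where the interaction between the radical operation and the Boolean structure (encoded in the equivalence of Lemma 2.4(6) together with Lemma 2.4(3)) does the work; in particular, one does \emph{not} need the stronger statement that every complemented congruence is a radical congruence.
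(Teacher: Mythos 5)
Your proof is correct, and its first half coincides with the paper's: the paper simply invokes Lemma 3.2(1) and (2) to see that $\lambda_A|_{B(Con(A))}$ is a bounded lattice morphism between Boolean algebras (hence a Boolean morphism, complements being unique), with the identity $\alpha\cap\beta=[\alpha,\beta]$ for $\alpha\in B(Con(A))$ serving as the implicit glue, exactly as you note. Where you genuinely diverge is injectivity. The paper tests the kernel at the top: for a Boolean morphism it suffices that $\lambda_A(\gamma)=1$ force $\gamma=\nabla_A$, and this is precisely Lemma 3.2(3), valid for every $\gamma\in C(A)$ with no hypothesis on $A$ --- a one-line argument. You instead test the kernel at the bottom via the symmetric difference, which obliges you to show that $\lambda_A(\gamma)=0$ forces $\gamma=\Delta_A$ for complemented $\gamma$; since Lemma 3.2(5) gives this only under semiprimeness (which the present lemma does not assume), you supply the missing step through Lemma 2.4(6) and 2.4(3), and you do so correctly: from $\gamma\lor\gamma^{\perp}=\nabla_A$ one gets $\rho(\gamma)\lor\rho(\gamma^{\perp})=\nabla_A$, while $\rho(\gamma)=\rho(\Delta_A)\subseteq\rho(\gamma^{\perp})$ yields $\rho(\gamma^{\perp})=\nabla_A$, hence $\gamma^{\perp}=\nabla_A$ and $\gamma=\gamma\cap\gamma^{\perp}=\Delta_A$. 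Both routes are sound; the paper's is shorter because the test at $1$ needs nothing beyond Lemma 3.2(3), whereas yours has the merit of making explicit that semiprimeness enters only later, for the surjectivity statement of Proposition 4.4.
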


\begin{proof} The properties (1) and (2) of Lemma 3.2 show that $\lambda_A|_{B(Con(A))}$ is a Boolean morphism. The injectivity follows by using Lemma 3.2(3).
\end{proof}

\begin{propozitie}
Assume that the algebra $A$ is semiprime. Then the function $\lambda_A|_{B(Con(A))}:B(Con(A))\rightarrow B(L(A))$ is a Boolean isomorphism.
\end{propozitie}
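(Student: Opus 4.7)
The plan is to prove surjectivity of $\lambda_A|_{B(Con(A))}$, since injectivity and the Boolean-morphism property are already secured by Lemma 4.3. So let $x \in B(L(A))$ be given, with complement $y \in B(L(A))$, i.e. $x \lor y = 1$ and $x \land y = 0$ in $L(A)$.

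First I would lift $x$ and $y$ along $\lambda_A$: by the very definition of $L(A) = C(A)/{\equiv}$, there exist $\alpha, \beta \in C(A)$ with $\lambda_A(\alpha) = x$ and $\lambda_A(\beta) = y$. Using Lemma 3.2(1) and (2), the two Boolean identities translate to $\lambda_A(\alpha \lor \beta) = 1$ and $\lambda_A([\alpha,\beta]) = 0$. Then Lemma 3.2(3) converts the first into the congruence equality $\alpha \lor \beta = \nabla_A$, and the semiprimeness of $A$ lets me apply Lemma 3.2(5) to the second, yielding $[\alpha,\beta] = \Delta_A$.

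At this point Lemma 4.1(1) applies directly: from $\alpha \lor \beta = \nabla_A$ and $[\alpha,\beta] = \Delta_A$ we conclude $\alpha \in B(Con(A))$ (with $\alpha^{\perp} = \beta$). Since $\lambda_A(\alpha) = x$, this $\alpha$ is the required preimage, and surjectivity of $\lambda_A|_{B(Con(A))}$ follows. Combined with Lemma 4.3, $\lambda_A|_{B(Con(A))}$ is a bijective Boolean morphism, hence a Boolean isomorphism.

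The only delicate point is the use of semiprimeness: without the hypothesis $\rho(\Delta_A) = \Delta_A$, the equation $\lambda_A([\alpha,\beta]) = 0$ would only give $\rho([\alpha,\beta]) = \rho(\Delta_A)$, which is too weak to trigger Lemma 4.1(1). Everything else is a routine translation between $C(A)$ and its quotient $L(A)$ via the properties of $\lambda_A$ collected in Lemma 3.2.
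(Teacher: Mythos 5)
Your proposal is correct and follows essentially the same route as the paper's own proof: reduce to surjectivity via Lemma 4.3, lift $x$ and its complement to $\alpha,\beta\in C(A)$, translate the Boolean identities through Lemma 3.2 (using semiprimeness for the meet), and conclude with Lemma 4.1(1). Your closing remark on where semiprimeness is genuinely needed is accurate and matches the paper's use of Lemma 3.2(5).
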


\begin{proof}
According to Lemma 4.3, it suffices to prove that the function $\lambda_A|_{B(Con(A)}$ is surjective. Assume that $x\in B(L(A))$ so there exists $y\in B(L(A))$ such that $x\lor y=1$ and $x\land y = 0$. Thus there exist $\alpha,\beta\in C(A)$ such that $x=\lambda_A(\alpha), y= \lambda_A(\beta)$, so $\lambda_A(\alpha\lor \beta)= \lambda_A(\alpha)\lor \lambda_A(\beta) = 1$ and  $\lambda_A([\alpha, \beta])= \lambda_A(\alpha)\land  \lambda_A(\beta) = 0$. By Lemma 3.2(3) we obtain $\alpha\lor \beta= \nabla_A$. Since $A$ is semiprime, $\lambda_A([\alpha, \beta])= 0$ implies $[\alpha,\beta] = \Delta_A$ (cf. Lemma 3.2(5)). Due to Lemma 4.1(1) we get $\alpha\in B(Con(A))$, therefore $\lambda_A|_{B(Con(A))}$ is surjective.

\end{proof}

Recall from \cite{GM2} that the algebra $A$ is said to be hyperarchimedean if for any $\alpha\in K(A)$ there exists an integer $n\geq 1$ such that $[\alpha,\alpha]^n\in B(Con(A))$.

\begin{propozitie} Any hyperarchimedean semiprime algebra $A$ is quasi-commutative.
\end{propozitie}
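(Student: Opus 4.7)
The plan is to verify the criterion of Lemma 3.20, which says that it suffices, given arbitrary $\alpha,\beta\in K(A)$, to exhibit a compact congruence $\gamma$ with $\gamma\subseteq[\alpha,\beta]$ and $\rho(\gamma)=\rho([\alpha,\beta])$. So I fix $\alpha,\beta\in K(A)$ and look for a canonical such $\gamma$, constructed from the Boolean approximations supplied by the hyperarchimedean hypothesis.

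First I would invoke the hyperarchimedean assumption on each of $\alpha$ and $\beta$ separately, producing integers $m,n\geq 1$ with $[\alpha,\alpha]^{m},[\beta,\beta]^{n}\in B(Con(A))$. By Lemma 4.1(2) both of these congruences lie in $K(A)$. Define
\[
\gamma \;:=\; \bigl[[\alpha,\alpha]^{m},\,[\beta,\beta]^{n}\bigr].
\]
Because $[\alpha,\alpha]^{m}\in B(Con(A))$, the identity $\theta\wedge\varepsilon=[\theta,\varepsilon]$ (valid whenever $\varepsilon$ is complemented, recalled at the start of Section 4) gives $\gamma=[\alpha,\alpha]^{m}\wedge[\beta,\beta]^{n}$. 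Since the Boolean center $B(Con(A))$ is closed under meets, $\gamma\in B(Con(A))\subseteq K(A)$; in particular $\gamma$ is compact.

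It remains to check the two required properties. Monotonicity of the commutator, applied to the inclusions $[\alpha,\alpha]^{m}\subseteq\alpha$ and $[\beta,\beta]^{n}\subseteq\beta$, immediately gives $\gamma\subseteq[\alpha,\beta]$. For the radical, Lemma 2.4(2) gives $\rho(\gamma)=\rho([\alpha,\alpha]^{m})\wedge\rho([\beta,\beta]^{n})$, and Lemma 2.4(7) simplifies each factor to yield $\rho(\gamma)=\rho(\alpha)\wedge\rho(\beta)=\rho([\alpha,\beta])$. Invoking Lemma 3.20, $A$ is quasi-commutative.

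The only mildly delicate step is the choice of $\gamma$: a naive attempt to take $\gamma=[\alpha,\beta]$ itself fails because commutators of compact congruences need not be compact, and an attempt based on $\alpha\vee\beta$ produces the wrong radical. The trick is to replace $\alpha$ and $\beta$ by radical-equivalent Boolean refinements $[\alpha,\alpha]^{m}$ and $[\beta,\beta]^{n}$, and then exploit the fact that the commutator of two Boolean congruences collapses to their meet and therefore lands back in $B(Con(A))\subseteq K(A)$. I note in passing that the semiprime hypothesis is not visibly used in this argument; it presumably appears here because the notion of hyperarchimedean is natural only in the semiprime setting and to align with later results such as Proposition 4.4.
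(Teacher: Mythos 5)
Your proof is correct and follows essentially the same route as the paper: both take $\gamma=[[\alpha,\alpha]^{m},[\beta,\beta]^{n}]$, observe that it is Boolean (hence compact) and contained in $[\alpha,\beta]$, match radicals via Lemma 2.4(2) and (7), and conclude by Lemma 3.20. Your side remark that the semiprime hypothesis is not actually used is accurate for the paper's argument as well.
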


\begin{proof} Assume that $\alpha,\beta \in K(A)$ so there exists an integer $n\geq 1$ such that $[\alpha,\alpha]^n\in B(Con(A))$ and $[\beta,\beta]^n\in B(Con(A))$. Denoting $\gamma=[[\alpha,\alpha]^n, [\beta,\beta]^n]$ it follows that $\gamma\in B(Con(A))\subseteq K(A)$, $\gamma\subseteq [\alpha,\beta]$ and $\lambda_A(\gamma)= \lambda_A([\alpha,\alpha]^n)\land \lambda_A( [\beta,\beta]^n])= \lambda_A(\alpha)\land \lambda_A(\beta)= \lambda_A([\alpha,\beta])$. In virtue of Lemma 3.20, $A$ is quasi-commutative.
\end{proof}

If $L$ is a bounded distributive lattice and $I$ is an ideal of $L$ then the annihilator of $I$ is the ideal $Ann(I) = \{x\in L|x\land y = 0$ for all $y\in I\}$.

\begin{lema}
If $\alpha\in C(A)$ and $\phi\in Spec(A)$ then $\lambda_A(\alpha)\in \phi^{\ast}$ if and only if $\alpha\subseteq \phi$.
\end{lema}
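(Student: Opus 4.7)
The plan is to handle the two implications separately. The direction $\lambda_A(\alpha)\in\phi^{\ast}\Rightarrow\alpha\subseteq\phi$ is essentially a bookkeeping exercise built on Lemma 3.2(6), while the converse is a structural induction on how $\alpha$ is built inside $C(A)$ according to the three rules at the start of Section 3, and this is where the primality of $\phi$ plays the decisive role.

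For the easier direction, from $\lambda_A(\alpha)\in\phi^{\ast}$ I would unpack the definition of $\phi^{\ast}$ to obtain a compact congruence $\gamma\in K(A)$ with $\gamma\subseteq\phi$ and $\lambda_A(\alpha)\leq\lambda_A(\gamma)$ (possibly after taking a finite join of generators, which stays in $K(A)$ and inside $\phi$ since $K(A)$ is closed under joins and $\phi$ is a congruence). Lemma 3.2(6) then says that every prime congruence of $A$ containing $\gamma$ also contains $\alpha$; specialising to $\phi$ itself yields $\alpha\subseteq\phi$.

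For the converse, the induction runs on the three clauses defining $C(A)$. If $\alpha\in K(A)$, then $\alpha\subseteq\phi$ places $\lambda_A(\alpha)$ directly in the generating set of $\phi^{\ast}$. If $\alpha=\alpha_{1}\lor\alpha_{2}$ with $\alpha_{1},\alpha_{2}\in C(A)$, each summand is contained in $\phi$, so the inductive hypothesis yields $\lambda_A(\alpha_{i})\in\phi^{\ast}$ for $i=1,2$, and the join-preservation in Lemma 3.2(1) together with the ideal property of $\phi^{\ast}$ closes the step. The delicate case is $\alpha=[\alpha_{1},\alpha_{2}]$: primality of $\phi$ splits $[\alpha_{1},\alpha_{2}]\subseteq\phi$ into $\alpha_{1}\subseteq\phi$ or $\alpha_{2}\subseteq\phi$, and the induction hypothesis gives $\lambda_A(\alpha_{i})\in\phi^{\ast}$ for at least one index $i$; finally Lemma 3.2(2) yields $\lambda_A(\alpha)=\lambda_A(\alpha_{1})\land\lambda_A(\alpha_{2})\leq\lambda_A(\alpha_{i})$, and downward closure of the ideal $\phi^{\ast}$ produces $\lambda_A(\alpha)\in\phi^{\ast}$. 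The main obstacle, such as it is, lives in this commutator step; it requires nothing beyond the inductive scheme already deployed in the proof of Proposition 3.12 $(2)\Rightarrow(1)$, so the argument is more a transcription of an existing pattern than a genuinely new computation.
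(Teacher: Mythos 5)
Your argument is correct, but the paper itself offers no internal proof to compare it with: it simply points to Lemma 4.42 of the reference [GKM]. So the useful comparison is with the definitions in force here. Your forward direction is fine (and can be said even more directly: a witness $\gamma$ with $\gamma\subseteq\phi$ and $\rho(\gamma)=\rho(\alpha)$ gives $\alpha\subseteq\rho(\alpha)=\rho(\gamma)\subseteq\rho(\phi)=\phi$, since prime congruences are radical); the parenthetical about taking a finite join of generators is unnecessary, because $\phi^{\ast}$ is literally a set of $\lambda_A$-images and membership already hands you a single witness.

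The backward direction is where you should be careful about which definition of $\theta^{\ast}$ you are using. The displayed definition takes representatives $\alpha\in K(A)$, but every proof in the paper that unpacks an element of $\theta^{\ast}$ (Lemmas 3.4, 3.9, 3.10, 3.14, Propositions 3.22 and 4.7) takes representatives in $C(A)$; under that reading, $\alpha\in C(A)$ and $\alpha\subseteq\phi$ put $\lambda_A(\alpha)$ in $\phi^{\ast}$ by definition, and your entire induction is superfluous. Under the literal $K(A)$ reading, the induction is genuinely needed, but then the commutator step's appeal to ``downward closure of the ideal $\phi^{\ast}$'' is the whole difficulty in disguise: downward closure of $\{\lambda_A(\beta)\mid\beta\in K(A),\ \beta\subseteq\phi\}$ amounts to producing, for each $\gamma\in C(A)$ with $\rho(\gamma)\subseteq\phi$, a \emph{compact} $\delta\subseteq\phi$ with $\rho(\delta)=\rho(\gamma)$ --- which is essentially the statement being proved, and which in general requires quasi-commutativity (Lemma 3.21), a hypothesis absent from this lemma. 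Your join step survives the literal reading (compact witnesses combine into a compact join), but the commutator step does not, since $[\beta_1,\beta_2]$ need not be compact. So either state explicitly that you read $\phi^{\ast}$ with $C(A)$-representatives (in which case drop the induction), or the commutator step has a circularity that cannot be patched without extra hypotheses.
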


\begin{proof} Similar to the proof of Lemma 4.42 of \cite{GKM}.
\end{proof}

The following two results improve Propositions 6.25 and 6.26 of \cite{GM2}. They show the way in which the functions $(\cdot)^{\ast}$ and $(\cdot)_{\ast}$ preserve the annihilators.

\begin{propozitie} Assume that the algebra $A$ is semiprime.
If $\theta\in Con(A)$ then $Ann(\theta^{\ast}) = (\theta^{\perp})^{\ast}$.
\end{propozitie}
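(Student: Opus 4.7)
The plan is to prove both inclusions $(\theta^{\perp})^{\ast}\subseteq Ann(\theta^{\ast})$ and $Ann(\theta^{\ast})\subseteq (\theta^{\perp})^{\ast}$ separately, relying on the key identity $[\theta,\theta^{\perp}]=\Delta_A$. This identity follows at once from the very definition $\theta^{\perp}=\bigvee\{\gamma\mid [\theta,\gamma]=\Delta_A\}$ and the distributivity of the commutator over arbitrary joins: $[\theta,\theta^{\perp}]=\bigvee\{[\theta,\gamma]\mid [\theta,\gamma]=\Delta_A\}=\Delta_A$. I would record this as the first step.

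For the inclusion $(\theta^{\perp})^{\ast}\subseteq Ann(\theta^{\ast})$, I would pick a typical element $\lambda_A(\alpha)$ of $(\theta^{\perp})^{\ast}$ with $\alpha\in K(A)$ and $\alpha\subseteq\theta^{\perp}$, and a typical element $\lambda_A(\beta)$ of $\theta^{\ast}$ with $\beta\in K(A)$ and $\beta\subseteq\theta$. By Lemma 3.2(2), $\lambda_A(\alpha)\wedge\lambda_A(\beta)=\lambda_A([\alpha,\beta])$; by monotonicity of the commutator, $[\alpha,\beta]\subseteq [\theta^{\perp},\theta]=\Delta_A$, so this meet equals $0$. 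Extending this from generators to all of $(\theta^{\perp})^{\ast}$ via the ideal structure (finite joins and downward closure) then shows $(\theta^{\perp})^{\ast}\subseteq Ann(\theta^{\ast})$. Notice this direction does \emph{not} use semiprimeness.

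For the converse inclusion $Ann(\theta^{\ast})\subseteq (\theta^{\perp})^{\ast}$, I would take $x\in Ann(\theta^{\ast})$ and write $x=\lambda_A(\alpha)$ for some $\alpha\in C(A)$. For every compact $\beta\subseteq\theta$ we have $\lambda_A(\beta)\in\theta^{\ast}$, hence $\lambda_A([\alpha,\beta])=x\wedge\lambda_A(\beta)=0$. Invoking semiprimeness via Lemma 3.2(5), this forces $[\alpha,\beta]=\Delta_A$ for every such $\beta$. Since $Con(A)$ is algebraic, $\theta=\bigvee\{\beta\in K(A)\mid \beta\subseteq\theta\}$, and distributivity of the commutator over joins yields $[\alpha,\theta]=\Delta_A$, which by the definition of $\theta^{\perp}$ gives $\alpha\subseteq\theta^{\perp}$. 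Since $\alpha\in C(A)$, Lemma 3.4 gives $\alpha^{\ast}=(\lambda_A(\alpha)]$, so $x=\lambda_A(\alpha)\in\alpha^{\ast}$; monotonicity of $(\cdot)^{\ast}$ then yields $x\in\alpha^{\ast}\subseteq(\theta^{\perp})^{\ast}$.

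I expect the main obstacle to be the second inclusion: semiprimeness is the essential ingredient that allows us to lift the equality $\lambda_A([\alpha,\beta])=0$ from $L(A)$ back to the commutator in $Con(A)$, and one must then exploit the algebraicity of $Con(A)$ to pass from individual compact $\beta\subseteq\theta$ to the full congruence $\theta$. The first inclusion, by contrast, is essentially a direct computation once $[\theta,\theta^{\perp}]=\Delta_A$ is noted.
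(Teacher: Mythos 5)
Your proof is correct and follows essentially the same route as the paper: both inclusions are handled the same way, semiprimeness enters only for $Ann(\theta^{\ast})\subseteq(\theta^{\perp})^{\ast}$, and the only cosmetic difference is that you make the identity $[\theta,\theta^{\perp}]=\Delta_A$ explicit and conclude $\alpha\subseteq\theta^{\perp}$ via $[\alpha,\theta]=\bigvee_{\beta}[\alpha,\beta]=\Delta_A$, whereas the paper passes through $\alpha\subseteq\bigcap\beta^{\perp}=(\bigvee\beta)^{\perp}$ — the same use of the distributivity of the commutator over joins.
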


\begin{proof} Firstly we shall prove that $Ann(\theta^{\ast})\subseteq (\theta^{\perp})^{\ast}$. Let $x$ be an element of $Ann(\theta^{\ast})$, so $x = \lambda_A(\alpha)$ for some $\alpha\in C(A)$. Assume that $\beta\in K(A)$ and $\beta\subseteq \theta$, therefore $\lambda_A([\alpha,\beta]) = \lambda_A(\alpha)\land \lambda_A(\beta) = 0$. But $A$ is semiprime, so $[\alpha,\beta] =\Delta_A$ (by Lemma 3.2(5)), hence $\alpha\subseteq \beta^{\perp}$. Then the following hold:

$\alpha\subseteq \bigcap\{\beta^{\perp}|\beta\in K(A), \beta\subseteq \theta\}$ = $(\bigvee\{\beta\in K(A)|\beta\subseteq \theta\})^{\perp}$ = $ \theta^{\perp}$.

It follows that $x = \lambda_A(\alpha)\in (\theta^{\perp})^{\ast}$, hence the inclusion $Ann(\theta^{\ast})\subseteq (\theta^{\perp})^{\ast}$ is proven.

For establish the converse inclusion $(\theta^{\perp})^{\ast}\subseteq Ann(\theta^{\ast})$, consider an element $x\in (\theta^{\perp})^{\ast}$, hence $x = \lambda_A(\alpha)$ for some $\alpha\in C(A)$ such that $\alpha\subseteq \theta^{\perp}$. Let $y$ be an element of $ \theta^{\ast}$. We have to prove that $x\land y=0$. $y$ has the form $y=\lambda_A(\beta)$, for some $\beta\in C(A)$ such that $\beta\subseteq \theta$. Since $\alpha\subseteq \theta^{\perp}$ we obtain $[\alpha,\beta]=0$, hence by applying Lemma 3.2(2) we get $x\land y= \lambda_A(\alpha)\land \lambda_A(\beta)=\lambda_A([\alpha,\beta])=0$. It results that $x\in Ann(\theta^{\ast})$, so the converse inclusion $(\theta^{\perp})^{\ast}\subseteq Ann(\theta^{\ast})$ holds.
\end{proof}

\begin{propozitie}
Assume that the algebra $A$ is semiprime. If $I$ is an ideal of the lattice $L(A)$ then $(Ann(I))_{\ast} = (I_{\ast})^{\perp}$.
\end{propozitie}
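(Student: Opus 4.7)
The plan is to prove the equality by establishing both inclusions directly, exploiting the distributivity of the commutator over arbitrary joins and the semiprimality of $A$ via Lemma 3.2(5).

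For the inclusion $(Ann(I))_{\ast}\subseteq (I_{\ast})^{\perp}$, I would use the algebraicity of $Con(A)$ and the very definition of $(\cdot)_{\ast}$ as a join of compact congruences. It suffices to take an arbitrary $\alpha\in K(A)$ with $\lambda_A(\alpha)\in Ann(I)$ and show $\alpha\subseteq (I_{\ast})^{\perp}$, which, by the residuation property, is equivalent to $[\alpha,I_{\ast}]=\Delta_A$. Writing $I_{\ast}=\bigvee\{\beta\in K(A)\mid \lambda_A(\beta)\in I\}$ and using that the commutator distributes over arbitrary joins, this reduces to showing $[\alpha,\beta]=\Delta_A$ for every such $\beta$. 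For each such $\beta$, $\lambda_A([\alpha,\beta])=\lambda_A(\alpha)\wedge \lambda_A(\beta)=0$ (since $\lambda_A(\alpha)\in Ann(I)$ and $\lambda_A(\beta)\in I$), and semiprimality via Lemma 3.2(5) upgrades this to $[\alpha,\beta]=\Delta_A$.

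For the reverse inclusion $(I_{\ast})^{\perp}\subseteq (Ann(I))_{\ast}$, again by algebraicity it suffices to fix a compact $\alpha\subseteq (I_{\ast})^{\perp}$ and verify $\lambda_A(\alpha)\in Ann(I)$. Pick $y\in I$ and write $y=\lambda_A(\beta)$ for some $\beta\in C(A)$. The key auxiliary step is to show that $\beta\subseteq I_{\ast}$: for every compact $\gamma\subseteq\beta$ we have $\lambda_A(\gamma)\leq \lambda_A(\beta)=y\in I$, so $\lambda_A(\gamma)\in I$, whence $\gamma\subseteq I_{\ast}$ by definition of $I_{\ast}$, and then algebraicity of $Con(A)$ yields $\beta=\bigvee\{\gamma\in K(A)\mid \gamma\subseteq\beta\}\subseteq I_{\ast}$. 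Then from $\alpha\subseteq (I_{\ast})^{\perp}$ we get $[\alpha,\beta]\subseteq [\alpha,I_{\ast}]=\Delta_A$, and applying Lemma 3.2(2) gives $\lambda_A(\alpha)\wedge y=\lambda_A([\alpha,\beta])=0$. Hence $\lambda_A(\alpha)\in Ann(I)$, and since $\alpha$ is compact this places $\alpha$ inside $(Ann(I))_{\ast}$.

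The main obstacle, or rather the place where semiprimality is unavoidable, is in both directions: one needs to pass back and forth between the equation $\lambda_A([\alpha,\beta])=0$ in $L(A)$ and the equation $[\alpha,\beta]=\Delta_A$ in $Con(A)$, which is exactly the content of Lemma 3.2(5) when $A$ is semiprime. Everything else is bookkeeping using the adjunction-like relations $\theta\subseteq (\theta^{\ast})_{\ast}$, $I\subseteq (I_{\ast})^{\ast}$, the definition of $(\cdot)_{\ast}$ as a supremum of compact representatives, and the residuation definition of $(\cdot)^{\perp}$. No appeal to quasi-commutativity or to Proposition 3.12 is needed, so the result holds for all semiprime algebras in $\mathcal{V}$, paralleling Proposition 4.7.
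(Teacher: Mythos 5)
Your argument is correct and is essentially the paper's proof, which runs the same computation as a single chain of equivalences for compact $\alpha$ (distributivity of the commutator over the join defining $I_{\ast}$, the residuation characterization of $(\cdot)^{\perp}$, and Lemma 3.2(5) to pass from $\lambda_A([\alpha,\beta])=0$ to $[\alpha,\beta]=\Delta_A$). If anything you are slightly more careful than the paper in checking $\lambda_A(\alpha)\in Ann(I)$ against arbitrary $y=\lambda_A(\beta)$ with $\beta\in C(A)$ rather than only compact $\beta$; note only that semiprimality is in fact needed just in the first inclusion, since $[\alpha,\beta]=\Delta_A$ always gives $\lambda_A([\alpha,\beta])=0$.
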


\begin{proof} Let $\alpha$ be a compact congruence of $A$. By using the distributivity of commutator operation we have

$[\alpha,I_{\ast}]=\bigvee\{[\alpha,\beta]|\beta\in K(A),\lambda_A(\beta)\in I\}$.

According to Lemma 3.2(5), the following assertions are equivalent:

$\bullet$ $\alpha\subseteq (I_{\ast})^{\perp}$;

$\bullet$ $[\alpha,I_{\ast}]=\Delta_A$;

$\bullet$ For all $\beta\in K(A)$, $\lambda_A(\beta)\in I$ implies $[\alpha,\beta]=\Delta_A$;

$\bullet$ For all $\beta\in K(A)$, $\lambda_A(\beta)\in I$ implies $\lambda_A([\alpha,\beta])=0$;

$\bullet$ For all $\beta\in K(A)$, $\lambda_A(\beta)\in I$ implies $\lambda_A(\alpha)\land \lambda_A(\beta)=\Delta_A$;

$\bullet$ $\lambda_A(\alpha)\in Ann(I)$;

$\bullet$ $\alpha\subseteq (Ann(I))_{\ast}$.

In virtue of the previous equivalences,  $\alpha\in (I_{\ast})^{\perp}$ if and only if $\alpha\subseteq (Ann(I))_{\ast}$, therefore $(Ann(I))_{\ast} = (I_{\ast})^{\perp}$.
\end{proof}

Let $Min(A)$ be the set of minimal prime congruences of the algebra $A$ ($Min(A)$ is named the minimal prime spectrum of $A$). If $\phi\in Spec(A)$ then there exists $\psi\in Min(A)$ such that $\psi\subseteq \phi$. We shall denote by $Min_Z(A)$ the topological space obtained by restricting the topology of $Spec_Z(A)$ to $Min(A)$.

If $L$ is a bounded distributive lattice then $Min_{Id}(L)$ will denote the set of minimal prime ideals of $L$ (= the minimal prime spectrum of $L$). $Min_{Id,Z}(L)$ will be the topological space obtained by restricting the topology of $Spec_{Id,Z}(L)$ to $Min_{Id}(L)$.

\begin{lema}
Let $A$ be a quasi-commutative algebra. Assume that $\phi\in Spec(A)$ and $P\in Spec_{Id}(L(A))$.
\usecounter{nr}
\begin{list}{(\arabic{nr})}{\usecounter{nr}}
\item $\phi\in Min(A)$ if and only if $\phi^{\ast}\in Min_{Id}(L(A))$;
\item $P\in Min_{Id}(L(A))$ if and only if $P_{\ast}\in Min(A)$.

\end{list}
\end{lema}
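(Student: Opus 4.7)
The plan is to leverage the order-isomorphism between $Spec(A)$ and $Spec_{Id}(L(A))$ established earlier. Since $A$ is quasi-commutative, Proposition 3.22 (equivalently, Theorem 3.26) guarantees $(I_\ast)^\ast = I$ for every ideal $I$ of $L(A)$; combined with Lemma 3.7, which says $(\phi^\ast)_\ast = \phi$ for every $\phi \in Spec(A)$, this shows that the maps $u$ and $v$ of Proposition 3.16 are mutually inverse bijections between $Spec(A)$ and $Spec_{Id}(L(A))$. Both maps are order-preserving (directly from the definitions of $(\cdot)^\ast$ and $(\cdot)_\ast$ and the monotonicity remarked before Lemma 3.4), so they constitute an order isomorphism. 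Minimal elements are preserved by order isomorphisms, which is exactly what is asserted.

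For (1), I would give the argument explicitly. Assume $\phi \in Min(A)$ and let $Q \in Spec_{Id}(L(A))$ with $Q \subseteq \phi^\ast$. Applying $(\cdot)_\ast$ yields $Q_\ast \subseteq (\phi^\ast)_\ast = \phi$. By Proposition 3.12, $Q_\ast \in Spec(A)$, so minimality of $\phi$ forces $Q_\ast = \phi$, whence $Q = (Q_\ast)^\ast = \phi^\ast$. This shows $\phi^\ast \in Min_{Id}(L(A))$. Conversely, assume $\phi^\ast \in Min_{Id}(L(A))$ and let $\psi \in Spec(A)$ with $\psi \subseteq \phi$. Then by Lemma 3.10, $\psi^\ast \in Spec_{Id}(L(A))$, and monotonicity gives $\psi^\ast \subseteq \phi^\ast$. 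Minimality of $\phi^\ast$ forces $\psi^\ast = \phi^\ast$, so $\psi = (\psi^\ast)_\ast = (\phi^\ast)_\ast = \phi$, which proves $\phi \in Min(A)$.

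Part (2) is obtained by a symmetric argument (or equivalently by substituting $\phi = P_\ast$ in (1) and using $(P_\ast)^\ast = P$). Explicitly, if $P \in Min_{Id}(L(A))$ and $\psi \subseteq P_\ast$ with $\psi \in Spec(A)$, then $\psi^\ast \subseteq (P_\ast)^\ast = P$ and $\psi^\ast$ is prime by Lemma 3.10, hence $\psi^\ast = P$ and $\psi = (\psi^\ast)_\ast = P_\ast$, giving $P_\ast \in Min(A)$; the converse is identical after applying $(\cdot)^\ast$.

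I do not expect a real obstacle here: the only substantive ingredient is the quasi-commutativity hypothesis, which is precisely what converts the generally valid inclusions $\phi \subseteq (\phi^\ast)_\ast$ and $I \subseteq (I_\ast)^\ast$ (Lemma 3.6) into equalities, yielding the bijective correspondence needed to transport minimality in both directions.
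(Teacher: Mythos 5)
Your proposal is correct and follows essentially the same route as the paper: both reduce the statement to the fact that, under quasi-commutativity, $(I_\ast)^\ast=I$ and $(\phi^\ast)_\ast=\phi$ make $u$ and $v$ mutually inverse order-preserving bijections (Proposition 3.16), and minimality is preserved under such an order isomorphism. You merely spell out explicitly the verification that the paper leaves implicit.
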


\begin{proof} According to Theorem 3.26, for any ideal $I$ of $L(A)$, we have $I=(I_{\ast})^{\ast}$, so the conditions of Proposition 3.12 are satisfied. Equivalences (1) and (2) follow by using that the order-preserving isomorphisms $u$ and $v$ from Proposition 3.16 are inverse to one another.
\end{proof}

By the previous lemma it follows that the bijective maps

$u|_{Min(A)}:Min(A)\rightarrow Min_{Id}(L(A))$, $v|_{Min_{Id}(L(A))}:Min_{Id}(L(A))\rightarrow Min(A)$ are inverse to one another.

\begin{lema} If $A$ is a quasi-commutative algebra then the following maps

$u|_{Min(A)}:Min(A)\rightarrow Min_{Id}(L(A))$; $v|_{Min_{Id}(L(A))}:Min_{Id}(L(A))\rightarrow Min(A)$ are homeomorphisms, inverse to one another.
\end{lema}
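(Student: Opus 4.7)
The plan is to bootstrap directly from what has already been established rather than redo any topological work from scratch. By Theorem 3.26, the quasi-commutative hypothesis on $A$ implies that the equivalent conditions of Proposition 3.12 are satisfied, so Proposition 3.16 applies and yields that the two maps
\[
u:Spec_Z(A)\longrightarrow Spec_{Id,Z}(L(A)),\qquad v:Spec_{Id,Z}(L(A))\longrightarrow Spec_Z(A)
\]
are homeomorphisms that are inverse to one another. This is the whole analytic content we will need; everything that follows is formal.

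Next I would invoke Lemma 4.10. Part (1) says that $u$ sends $Min(A)$ into $Min_{Id}(L(A))$, while part (2) says that $v$ sends $Min_{Id}(L(A))$ into $Min(A)$. Combined with the fact that $v=u^{-1}$ on the full prime spectra, this shows that the restrictions
\[
u|_{Min(A)}:Min(A)\longrightarrow Min_{Id}(L(A)),\qquad v|_{Min_{Id}(L(A))}:Min_{Id}(L(A))\longrightarrow Min(A)
\]
are well defined, mutually inverse bijections; there is no content beyond the two equivalences in Lemma 4.10.

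To upgrade these bijections to homeomorphisms, the step that needs a word of justification — though it is essentially bookkeeping — is the interaction with the subspace topologies. By definition, $Min_Z(A)$ carries the subspace topology induced from $Spec_Z(A)$, and likewise $Min_{Id,Z}(L(A))$ carries the subspace topology from $Spec_{Id,Z}(L(A))$. A standard point-set fact says that if $f:X\to Y$ is a homeomorphism and $X_0\subseteq X$, $Y_0\subseteq Y$ satisfy $f(X_0)=Y_0$, then $f|_{X_0}:X_0\to Y_0$ is a homeomorphism for the subspace topologies. Applying this to $u$ with $X_0=Min(A)$ and $Y_0=Min_{Id}(L(A))$ (the set equality $u(Min(A))=Min_{Id}(L(A))$ being precisely Lemma 4.10), and symmetrically to $v$, yields the desired conclusion.

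There is no real obstacle here; the only thing to be careful about is to cite Theorem 3.26 before invoking Proposition 3.16, since Proposition 3.16 is stated conditionally on Proposition 3.12. Once that verification is made, the argument reduces to one line saying that the restriction of a homeomorphism to a pair of matched subspaces is a homeomorphism.
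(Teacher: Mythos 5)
Your proposal is correct and follows essentially the same route as the paper's proof: Lemma 4.10 gives that the restrictions are mutually inverse bijections, and the continuity is inherited from the homeomorphisms $u$ and $v$ of Proposition 3.16 restricted to the matched subspaces. Your explicit reminder to route through Theorem 3.26 before invoking Proposition 3.16 is a careful touch, but the argument is the same one the paper gives.
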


\begin{proof}
From the previous lemma we already know that the maps $u|_{Min(A)}$ and $v|_{Min_{Id}(L(A))}$ are bijections inverse one to another. To prove that they are continuous is straightforward, by using that the maps $u$ and $v$ from Proposition 3.16 are homeomorphisms, inverse to one another.
\end{proof}

\begin{corolar}
If $A$ is a quasi-commutative algebra then $Min_Z(A)$ is a zero-dimensional Hausdorff space.
\end{corolar}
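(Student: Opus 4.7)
My plan is to reduce the statement to the corresponding well-known fact about bounded distributive lattices, using the homeomorphism established in the preceding lemma. Specifically, by Lemma 4.11, the map $u|_{Min(A)}: Min_Z(A)\rightarrow Min_{Id,Z}(L(A))$ is a homeomorphism (with inverse $v|_{Min_{Id}(L(A))}$). Since being zero-dimensional and being Hausdorff are topological invariants, it suffices to prove the analogous statement for the bounded distributive lattice $L(A)$, namely that $Min_{Id,Z}(L(A))$ is zero-dimensional Hausdorff.

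For the lattice statement, I would invoke (or briefly recall) the classical result from the Stone duality theory of bounded distributive lattices: for any bounded distributive lattice $L$, the minimal prime spectrum $Min_{Id,Z}(L)$ is a Hausdorff zero-dimensional topological space. This can be cited from \cite{BalbesDwinger} or \cite{Johnstone}. The argument uses that for a minimal prime ideal $P\in Min_{Id}(L)$, one has $x\in P$ if and only if there exists $y\notin P$ with $x\land y = 0$, which allows one to separate distinct minimal primes by basic open sets of the form $D_{Id}(x)\cap Min_{Id}(L)$ whose complements in $Min_{Id}(L)$ are also open, yielding a clopen basis.

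With this invocation, the proof is essentially immediate: transport the zero-dimensional Hausdorff property of $Min_{Id,Z}(L(A))$ back along the homeomorphism $v|_{Min_{Id}(L(A))}$ (or equivalently along $u|_{Min(A)}$) to obtain the same property for $Min_Z(A)$.

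The only potential obstacle is whether one wants to present a self-contained proof of the lattice fact rather than cite it; if so, one would need to verify, for $\phi\in Min(A)$ and $\alpha\in K(A)$ with $\alpha\subseteq\phi$, that there exists $\beta\in K(A)$ with $\beta\not\subseteq\phi$ and $[\alpha,\beta]=\Delta_A$ (at the level of $L(A)$ this is the standard characterization of minimal primes). Given the paper's style, a direct appeal to the lattice-theoretic fact via Lemma 4.11 is the cleanest route.
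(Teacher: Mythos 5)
Your proof is correct and follows essentially the same route as the paper: transport the zero-dimensional Hausdorff property of $Min_{Id,Z}(L(A))$ back along the homeomorphism of the preceding lemma, citing the classical fact about minimal prime spectra of bounded distributive lattices (the paper cites \cite{Speed} for this rather than \cite{BalbesDwinger} or \cite{Johnstone}, but that is immaterial).
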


\begin{proof}
 We know from \cite{Speed} that the minimal prime spectrum of a bounded distributive lattice is a zero-dimensional Hausdorff space. In particular, the minimal prime spectrum $Min_{Id,Z}(L(A))$ is a zero-dimensional Hausdorff space. Since $Min_Z(A)$ and $Min_{Id,Z}(L(A))$ are homeomorphic spaces (cf. Lemma 4.10) it follows that $Min_Z(A)$ is a zero-dimensional Hausdorff space.
\end{proof}

\begin{lema}\cite{Speed}
Let $P$ be a prime ideal of a bounded distributive lattice $L$. Then the following are equivalent:
\usecounter{nr}
\begin{list}{(\arabic{nr})}{\usecounter{nr}}
\item $P$ is a minimal prime ideal;
\item For any $x\in L$, $x\in P$ implies $Ann(x)\not\subseteq P$;
\item For any $x\in L$, $x\in P$ if and only if $Ann(x)\not\subseteq P$.

\end{list}
\end{lema}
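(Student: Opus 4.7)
The plan is to handle the logical web by first dispatching the cheap implications and then focusing on the substantive one. The cheap equivalence is $(2)\Leftrightarrow(3)$: the reverse direction built into $(3)$ follows from primeness of $P$ --- if $y\in Ann(x)\setminus P$, then $x\wedge y=0\in P$ together with $y\notin P$ force $x\in P$; the forward direction is exactly $(2)$.

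For $(3)\Rightarrow(1)$: take any prime ideal $Q\subseteq P$. For each $x\in P$ use $(3)$ to pick $y\in Ann(x)\setminus P\subseteq L\setminus Q$; since $x\wedge y=0\in Q$ and $y\notin Q$, primeness of $Q$ gives $x\in Q$. Hence $Q=P$, so $P$ is minimal.

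The heart of the proof is $(1)\Rightarrow(2)$, and the key gadget is the set $G=\{y\in L : y\wedge z=0 \text{ for some } z\in L\setminus P\}$. A routine check --- using distributivity of $L$, and primeness of $P$ to close $G$ under finite joins (if $y_i\wedge z_i=0$ then $(y_1\vee y_2)\wedge(z_1\wedge z_2)=0$ with $z_1\wedge z_2\notin P$) --- shows $G$ is an ideal. Moreover $G\subseteq P$: from $y\wedge z=0\in P$ and $z\notin P$ we get $y\in P$ by primeness. The goal becomes $G=P$, which is exactly assertion $(2)$. Suppose, for contradiction, that $G\subsetneq P$ and pick $x\in P\setminus G$. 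Let $F$ be the filter of $L$ generated by $(L\setminus P)\cup\{x\}$; by distributivity its elements are precisely those $w$ with $w\geq z\wedge x$ for some $z\in L\setminus P$. The \emph{critical} check is $F\cap G=\emptyset$: if $w$ lay in the intersection, then $w\geq z\wedge x$ with $z\notin P$ and $w\wedge u=0$ with $u\notin P$, forcing $z\wedge u\wedge x=0$; primeness of $P$ gives $z\wedge u\notin P$, so $z\wedge u\in Ann(x)\setminus P$, contradicting $x\notin G$. The Birkhoff--Stone prime separation theorem for distributive lattices now produces a prime ideal $P'$ with $G\subseteq P'$ and $P'\cap F=\emptyset$; since $L\setminus P\subseteq F$ we obtain $P'\subseteq P$, and $x\in F$ gives $x\notin P'$, so $P'\subsetneq P$, contradicting the minimality of $P$.

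The main obstacle is the verification $F\cap G=\emptyset$: it is the one place where distributivity of $L$, primeness of $P$, and the choice $x\notin G$ must all combine, and it hinges on the ability to merge two witnesses $z,u\in L\setminus P$ into a single annihilator $z\wedge u$ still outside $P$. Everything else is ideal/filter bookkeeping plus one invocation of the prime separation theorem.
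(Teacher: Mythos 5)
Your argument is correct. Note, however, that the paper does not prove this lemma at all: it is quoted verbatim from the reference [Speed] (T.~P.~Speed, \emph{Spaces of ideals of distributive lattices II. Minimal prime ideals}), so there is no in-paper proof to compare against. What you have reconstructed is essentially the classical argument: the cheap directions $(2)\Leftrightarrow(3)$ and $(3)\Rightarrow(1)$ use only primeness of $P$ (resp.\ of a prime $Q\subseteq P$), and the substantive implication $(1)\Rightarrow(2)$ is handled by the standard device of the ideal $G=\bigcup\{Ann(z) : z\in L\setminus P\}$ (sometimes written $O(P)$), the observation that $(2)$ is exactly $G=P$, and a prime separation between $G$ and the filter generated by $(L\setminus P)\cup\{x\}$ to contradict minimality when $x\in P\setminus G$. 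All the checks you flag as critical --- that $G$ is an ideal contained in $P$, and that $F\cap G=\emptyset$ via merging the two witnesses $z,u\notin P$ into $z\wedge u\notin P$ --- are carried out correctly, and the separation theorem applies since $0\in G$ forces $F$ to be proper. This is a complete and self-contained proof of the cited result.
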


Now we shall use Lemma 4.12 and the transfer properties contained in Propositions 4.7 and 4.8 in order to characterize the minimal prime congruences in a semiprime quasi-commutative algebra.

\begin{lema} If $\theta\in Con(A)$ and $\phi\in Spec(A)$ then $\theta\subseteq \phi$ if and only if $\theta^{\ast}\subseteq \phi^{\ast}$
\end{lema}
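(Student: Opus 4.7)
The plan is to derive both directions from the monotonicity of the maps $(\cdot)^{\ast}$ and $(\cdot)_{\ast}$ together with the basic unit/counit style inequalities already available (Lemmas 3.6 and 3.7). Nothing new about primeness needs to be proved here beyond what Lemma 3.7 already gives, namely that $(\phi^{\ast})_{\ast}=\phi$ whenever $\phi\in Spec(A)$.

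For the forward direction, if $\theta\subseteq\phi$ then since $(\cdot)^{\ast}:Con(A)\to Id(L(A))$ is order-preserving (this was noted right after the definitions of $(\cdot)^{\ast}$ and $(\cdot)_{\ast}$), one gets $\theta^{\ast}\subseteq\phi^{\ast}$ immediately.

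For the converse, assume $\theta^{\ast}\subseteq \phi^{\ast}$. Applying the order-preserving map $(\cdot)_{\ast}$ yields $(\theta^{\ast})_{\ast}\subseteq(\phi^{\ast})_{\ast}$. Now invoke Lemma 3.6 on the left, giving $\theta\subseteq (\theta^{\ast})_{\ast}$, and Lemma 3.7 on the right (using $\phi\in Spec(A)$), giving $(\phi^{\ast})_{\ast}=\phi$. Chaining these three inclusions produces $\theta\subseteq (\theta^{\ast})_{\ast}\subseteq (\phi^{\ast})_{\ast}=\phi$, which is what we wanted.

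There is essentially no obstacle: the statement is a clean Galois-correspondence observation, made possible by the fact that prime congruences are exactly the fixed points of the closure operator $\theta\mapsto (\theta^{\ast})_{\ast}$ that Lemma 3.7 exhibits. The only thing to be careful about is to use Lemma 3.7 on $\phi$ (where it applies because $\phi$ is prime) and not on $\theta$, for which only the one-sided inequality of Lemma 3.6 is available in general.
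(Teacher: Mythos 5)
Your argument is correct and matches the paper's proof essentially verbatim: the paper also chains $\theta\subseteq(\theta^{\ast})_{\ast}\subseteq(\phi^{\ast})_{\ast}=\phi$ using Lemmas 3.6 and 3.7 together with the monotonicity of $(\cdot)_{\ast}$, and dismisses the forward direction as obvious by monotonicity of $(\cdot)^{\ast}$.
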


\begin{proof} Assume that $\theta^{\ast}\subseteq \phi^{\ast}$. By using Lemmas 3.6 and 3.7 we have $\theta\subseteq (\theta^{\ast})_{\ast}\subseteq (\phi^{\ast})_{\ast}=\phi$. The converse implication is obvious.
\end{proof}

\begin{teorema}
Let $\phi$ be a prime congruence of a semiprime quasi-commutative algebra $A$. Then the following are equivalent:
\usecounter{nr}
\begin{list}{(\arabic{nr})}{\usecounter{nr}}
\item $\phi$ is a minimal prime congruence;
\item For any $\alpha\in C(A)$, $\alpha\subseteq \phi$ implies $\alpha^{\perp}\not\subseteq \phi$;
\item For any $\alpha \in C(A)$, $\alpha\subseteq \phi$ if and only if $\alpha^{\perp}\not\subseteq \phi$.
\item For any $\alpha\in K(A)$, $\alpha\subseteq \phi$ implies $\alpha^{\perp}\not\subseteq \phi$;
\item For any $\alpha \in K(A)$, $\alpha\subseteq \phi$ if and only if $\alpha^{\perp}\not\subseteq \phi$.

\end{list}
\end{teorema}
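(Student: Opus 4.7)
The plan is to transfer the five conditions to the reticulation $L(A)$ and apply Speed's characterization (Lemma 4.12) to the prime ideal $\phi^{\ast}$. The bridge rests on three already-established facts: Lemma 4.9, which identifies $Min(A)$ with $Min_{Id}(L(A))$ via $\phi\mapsto\phi^{\ast}$ under the quasi-commutativity hypothesis; Lemma 4.6, which turns $\alpha\subseteq\phi$ into $\lambda_A(\alpha)\in\phi^{\ast}$ for $\alpha\in C(A)$; and Proposition 4.7 combined with Lemma 3.4, which together give $(\alpha^{\perp})^{\ast}=Ann(\alpha^{\ast})=Ann(\lambda_A(\alpha))$ whenever $\alpha\in C(A)$, using semiprimeness of $A$.

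First I would establish $(1)\Leftrightarrow(2)\Leftrightarrow(3)$. By Lemma 4.9, $\phi\in Min(A)$ is equivalent to $\phi^{\ast}\in Min_{Id}(L(A))$, which by Lemma 4.12 is equivalent to each of the two lattice statements: for every $x\in L(A)$, $x\in\phi^{\ast}$ implies $Ann(x)\not\subseteq\phi^{\ast}$, and respectively $x\in\phi^{\ast}$ iff $Ann(x)\not\subseteq\phi^{\ast}$. Since $\lambda_A:C(A)\to L(A)$ is surjective, the quantifier over $x\in L(A)$ can be replaced by one over $\alpha\in C(A)$ with $x=\lambda_A(\alpha)$. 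Applying Lemma 4.6 to $\lambda_A(\alpha)\in\phi^{\ast}$ and combining Lemma 4.13 with $Ann(\lambda_A(\alpha))=(\alpha^{\perp})^{\ast}$ to rewrite $Ann(\lambda_A(\alpha))\subseteq\phi^{\ast}$ as $\alpha^{\perp}\subseteq\phi$, these two lattice statements become exactly $(2)$ and $(3)$.

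Next, for $(2)\Leftrightarrow(4)$ and $(3)\Leftrightarrow(5)$, the forward implications are immediate since $K(A)\subseteq C(A)$. For the converse direction, I would invoke Lemma 3.21 to replace each $\alpha\in C(A)$ by a compact $\gamma\in K(A)$ with $\gamma\subseteq\alpha$ and $\rho(\gamma)=\rho(\alpha)$; then $\lambda_A(\gamma)=\lambda_A(\alpha)$, so $\alpha^{\ast}=\gamma^{\ast}$ and consequently $(\alpha^{\perp})^{\ast}=Ann(\alpha^{\ast})=Ann(\gamma^{\ast})=(\gamma^{\perp})^{\ast}$. Via Lemma 4.13, both $\alpha\subseteq\phi$ and $\alpha^{\perp}\subseteq\phi$ are equivalent to the corresponding statements for $\gamma$, so the condition restricted to $K(A)$ forces the full $C(A)$ version. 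This is exactly the place where quasi-commutativity is consumed through Lemma 3.21.

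The main obstacle is purely bookkeeping: keeping straight the directions of the Galois pair $(\cdot)^{\ast}$, $(\cdot)_{\ast}$ and ensuring that semiprimeness is actually invoked where needed (in Proposition 4.7, and implicitly in Lemma 3.2(5) which powers the annihilator transfer). Once the two translations \textbf{$\alpha\subseteq\phi$ iff $\lambda_A(\alpha)\in\phi^{\ast}$} and \textbf{$\alpha^{\perp}\subseteq\phi$ iff $Ann(\lambda_A(\alpha))\subseteq\phi^{\ast}$} are cleanly recorded, the theorem becomes a routine instance of Lemma 4.12 transported along the homeomorphism of Proposition 3.16 restricted to minimal prime spectra.
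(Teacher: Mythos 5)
Your proof is correct, and its first half coincides with the paper's: the equivalence of (1), (2), (3) is obtained exactly as in the text, by pushing everything through $\phi\mapsto\phi^{\ast}$ (Lemma 4.9), translating membership and annihilators via Lemma 3.4, Lemma 4.13 and Proposition 4.7, and invoking Speed's characterization (Lemma 4.12). Where you genuinely diverge is the reduction from $C(A)$ to $K(A)$. The paper proves $(4)\Rightarrow(2)$ by structural induction on the generation of $C(A)$: for $\alpha=\beta\lor\gamma$ it uses primeness of $\phi$ together with $(\beta\lor\gamma)^{\perp}=\beta^{\perp}\cap\gamma^{\perp}$, and for $\alpha=[\beta,\gamma]$ it uses $\beta^{\perp}\subseteq\alpha^{\perp}$; notably this induction consumes neither quasi-commutativity nor the reticulation, only elementary annihilator identities. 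You instead use Lemma 3.21 to replace $\alpha\in C(A)$ by a compact $\gamma\subseteq\alpha$ with $\rho(\gamma)=\rho(\alpha)$, deduce $\alpha^{\ast}=\gamma^{\ast}$ and hence $(\alpha^{\perp})^{\ast}=(\gamma^{\perp})^{\ast}$ from Proposition 4.7, and conclude via Lemma 4.13. This is valid and arguably more uniform --- it makes the role of quasi-commutativity explicit in this step and avoids a second induction --- at the price of re-entering the lattice $L(A)$ and re-using semiprimeness, whereas the paper's induction is self-contained at the level of $Con(A)$. Both arguments are sound; one small point worth recording in your write-up is why $\alpha\subseteq\phi$ follows from $\gamma\subseteq\phi$ (either from $\rho(\gamma)=\rho(\alpha)$ and the fact that prime congruences are radical, or from Lemma 4.13 as you indicate).
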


\begin{proof}
$(1) \Leftrightarrow (2)$ According to Lemma 3.4, for any $\alpha\in C(A)$, $\alpha^{\ast}$ is the principal ideal $(\lambda_A(\alpha)]$ generated by $\{\lambda_A(\alpha\}$ in the lattice $L(A)$. Since $A$ is semiprime, for any $\alpha\in C(A)$ we have $Ann(\lambda_A(\alpha))= Ann((\lambda_A(\alpha)])= Ann(\alpha^{\ast})=(\alpha^{\perp})^{\ast}$ (cf. Proposition 4.7).

By taking into account that $A$ is a semiprime quasi-commutative algebra and by using Lemmas 4.9, 4.12 and 4.13 we get the equivalence of the following properties:

$\bullet$ $\phi$ is a minimal prime congruence of $A$;

$\bullet$ $\phi^{\ast}$ is a minimal prime ideal of $L(A)$;

$\bullet$ For all $x\in\phi^{\ast}$, $Ann(x)\not\subseteq \phi^{\ast}$;

$\bullet$ For all $\alpha\in C(A)$, $\alpha\subseteq \phi$ implies $Ann(\lambda_A(\alpha))\not\subseteq \phi^{\ast}$;

$\bullet$ For all $\alpha\in C(A)$, $\alpha\subseteq \phi$ implies $(\alpha^{\perp})^{\ast}\not\subseteq \phi^{\ast}$;

$\bullet$ For all $\alpha\in C(A)$, $\alpha\subseteq \phi$ implies $\alpha^{\perp}\not\subseteq \phi$.

$(2) \Rightarrow (4)$ Obviously.

$(4) \Rightarrow (2)$ We shall prove the property $(4)$ by induction on the way in which the set $C(A)$ is defined :

$\bullet$ If $\alpha\in K(A)$ then we apply the hypothesis $(2)$.

$\bullet$ Assume that $\alpha=\beta\lor\gamma$, where $\beta,\gamma\in C(A)$ satisfy the induction hypothesis. Then $\alpha\subseteq \phi$ imply $\beta\subseteq \phi$ and $\gamma\subseteq \phi$, so $\beta^{\perp}\not\subseteq \phi$ and $\gamma^{\perp}\not\subseteq \phi$. Since $\phi$ is a prime congruence, it follows that $[\beta^{\perp},\gamma^{\perp}\not\subseteq\phi$, hence $\alpha^{\perp}= (\beta\lor\gamma)^{\perp}= \beta^{\perp}\cap \gamma^{\perp}\not\subseteq \phi$.

$\bullet$ Assume that $\alpha=[\beta,\gamma]$, where the congruences $\beta,\gamma\in C(A)$ satisfy the induction hypothesis. Then $\alpha\subseteq \phi$ implies $\beta\subseteq \phi$ or $\gamma\subseteq \phi$ (because $\phi\in Spec(A))$. If $\beta\subseteq \phi$ then $\beta^{\perp}\not\subseteq \phi$. From $\alpha\subseteq \beta$ we get $\beta^{\perp}\subseteq \alpha^{\perp}$, hence $\alpha^{\perp}\not\subseteq \phi$. The case $\gamma\subseteq \phi$ is treated in a similar way.

The equivalences $(2) \Leftrightarrow (3)$ and $(4) \Leftrightarrow (5)$ follow immediately.

\end{proof}

\section{Concluding Remarks}

\hspace{0.5cm} In this paper we introduced two classes of algebras in a semidegenerate congruence-modular variety $\mathcal{V}$: the quasi-commutative algebras and the spectral algebras. The first one generalizes the quasi-commutative rings and the second one generalizes the spectral rings (\cite{Belluce}, \cite{Belluce1}). We enumerate the main contributions:

$\bullet$ the quasi-commutative algebras and the spectral algebras coincide;

$\bullet$ the quasi-commutative algebras give us a suitable framework to define and study a notion of reticulation;

$\bullet$ we emphasize that this reticulation allows us to transfer some properties from bounded distributive lattices to quasi-commutative algebras and vice-versa (see the results proven in Section 4).

Now we shall mention some open problems:

$(1)$ It is well - known that the reticulation of commutative rings ensures a covariant functor from the category of commutative rings to the category of bounded distributive lattices (see e.g. \cite{Johnstone}). If we consider the variety $\mathcal{V}$ as a category (whose morphisms are $\tau$ - morphisms) then a similar result for reticulation construction does not hold (see \cite{GKM}). In \cite{GKM} it was studied a reticulation functor defined on the category $\mathcal{V^{tilde}}$ whose objects are the algebras of $\mathcal{V}$ and morphisms are the admissible morphisms of $\mathcal{V}$.

 Let us denote by $\mathcal{V}^{\star}$ the following category: (1) the objects of $\mathcal{V}^{\star}$ are the quasi-commutative algebras of $\mathcal{V}$; (2) the morphisms of $\mathcal{V}^{\star}$ are the admissible morphisms between the quasi-commutative algebras of $\mathcal{V}$.

 Now we shall state without proof a proposition that shows how a morphism $u:A\rightarrow B$ of $\mathcal{V}^{\star}$ provides a morphism $L(u):L(A)\rightarrow L(B)$ in the category of bounded distributive lattices. This  proposition generalizes a result of \cite{GM2} and their proofs are very similar.

\begin{propozitie}
Assume that $u:A\rightarrow B$ is a morphism in $\mathcal{V}^{\star}$ . Then there exists a morphism of bounded distributive lattices $L(u): L(A)\rightarrow L(B)$ such that the following diagram is commutative:

\begin{center}
\begin{picture}(150,70)
\put(0,50){$C(A)$}
\put(25,55){\vector(1,0){100}}
\put(70,60){$u^{\bullet}|C(A)$}
\put(130,50){$C(B)$}
\put(5,45){\vector(0,-1){30}}
\put(-10,30){$\lambda_{A}$}
\put(0,0){$L(A)$}
\put(25,5){\vector(1,0){100}}
\put(70,10){$L(u)$}
\put(130,0){$L(B)$}
\put(135,45){\vector(0,-1){30}}
\put(140,30){$\lambda_{B}$}
\end{picture}
\end{center}

\end{propozitie}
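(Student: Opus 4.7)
The plan is to define $L(u)$ on $L(A)$ via compact representatives and verify that the diagram commutes modulo radical equivalence, using admissibility of $u$ to transfer prime spectra between $B$ and $A$. For $\alpha\in C(A)$, invoke Lemma 3.21 (available since $A$ is quasi-commutative) to pick $\gamma\in K(A)$ with $\gamma\subseteq\alpha$ and $\rho_A(\gamma)=\rho_A(\alpha)$, so $\lambda_A(\alpha)=\lambda_A(\gamma)$. Set $L(u)(\lambda_A(\alpha)):=\lambda_B(u^\bullet(\gamma))$; this lands in $L(B)$ because $u^\bullet$ sends $K(A)$ into $K(B)\subseteq C(B)$. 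Well-definedness is the first use of admissibility: if $\gamma_1,\gamma_2\in K(A)$ satisfy $\rho_A(\gamma_1)=\rho_A(\gamma_2)$, then for every $\psi\in Spec(B)$ one has $u^*(\psi)\in Spec(A)$, and by adjointness together with the radical hypothesis,
\[u^\bullet(\gamma_1)\subseteq\psi\Leftrightarrow \gamma_1\subseteq u^*(\psi)\Leftrightarrow \gamma_2\subseteq u^*(\psi)\Leftrightarrow u^\bullet(\gamma_2)\subseteq\psi,\]
so $\rho_B(u^\bullet(\gamma_1))=\rho_B(u^\bullet(\gamma_2))$.

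Next, verify that $L(u)$ is a bounded lattice morphism. $L(u)(0)=\lambda_B(\Delta_B)=0$ is immediate. For $L(u)(1)=1$, admissibility forces $u^\bullet(\nabla_A)=\nabla_B$: otherwise some maximal $\phi\in Max(B)\subseteq Spec(B)$ containing $u^\bullet(\nabla_A)$ would yield $u^*(\phi)=\nabla_A\notin Spec(A)$, contradicting admissibility. Joins pass because $u^\bullet$ is a left adjoint. For meets, given $\gamma_1,\gamma_2\in K(A)$, use Lemma 3.20 to pick $\gamma\in K(A)$ with $\gamma\subseteq[\gamma_1,\gamma_2]$ and $\rho_A(\gamma)=\rho_A([\gamma_1,\gamma_2])$; the required identity $\rho_B(u^\bullet(\gamma))=\rho_B([u^\bullet(\gamma_1),u^\bullet(\gamma_2)])$ is checked on a prime $\psi\in Spec(B)$ by the chain
\[u^\bullet(\gamma)\subseteq\psi\Leftrightarrow \gamma\subseteq u^*(\psi)\Leftrightarrow [\gamma_1,\gamma_2]\subseteq u^*(\psi)\Leftrightarrow (\exists i)\,\gamma_i\subseteq u^*(\psi)\Leftrightarrow [u^\bullet(\gamma_1),u^\bullet(\gamma_2)]\subseteq\psi,\]
using admissibility and primality of $u^*(\psi)$ and $\psi$.

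Finally, for commutativity of the diagram, a structural induction on $C(A)$ shows that for each $\alpha\in C(A)$ the congruence $u^\bullet(\alpha)$ of $B$ is $\equiv$-equivalent to an element of $C(B)$, so $\lambda_B(u^\bullet(\alpha))$ is unambiguously an element of $L(B)$: the base case is $u^\bullet(K(A))\subseteq K(B)$, joins pass by left-adjointness of $u^\bullet$, and the commutator step is exactly the radical identity displayed above. With $\gamma$ as in the definition, one concludes $L(u)(\lambda_A(\alpha))=\lambda_B(u^\bullet(\gamma))=\lambda_B(u^\bullet(\alpha))$, the last equality being the admissibility argument applied to the pair $(\gamma,\alpha)$. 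The principal obstacle throughout is that $u^\bullet$ does not in general commute with the commutator operation; the proof bypasses this by extracting compact witnesses via Lemma 3.21 on the $A$-side and transferring every inclusion through prime spectra thanks to admissibility, which forces $u^\bullet([\alpha_1,\alpha_2])$ and $[u^\bullet(\alpha_1),u^\bullet(\alpha_2)]$ to share a radical even when they differ as congruences.
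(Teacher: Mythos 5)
The paper deliberately states this proposition \emph{without} proof (it defers to the analogous argument in \cite{GM2}), so there is no in-paper proof to compare against; judged on its own, your argument is correct and is almost certainly the intended one. The engine of the proof --- transferring radical equalities from $A$ to $B$ through admissibility, via the adjunction $u^{\bullet}(\theta)\subseteq\psi \Leftrightarrow \theta\subseteq u^{*}(\psi)$ together with $u^{*}(\psi)\in Spec(A)$ and the fact that a prime contains $\theta$ iff it contains $\rho(\theta)$ --- is exactly what makes $L(u)$ well defined and meet-preserving, and your appeal to Lemmas 3.20 and 3.21 for compact witnesses is precisely where quasi-commutativity of $A$ enters. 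You also correctly identify and repair a point the statement glosses over: $u^{\bullet}$ preserves compactness and arbitrary joins but not the commutator, so it need not map $C(A)$ into $C(B)$ literally, and the top arrow of the diagram only makes sense once one knows $u^{\bullet}(\alpha)$ is $\equiv$-equivalent to $u^{\bullet}(\gamma)\in K(B)\subseteq C(B)$. Your checks of $L(u)(0)=0$, of $L(u)(1)=1$ via $Max(B)\subseteq Spec(B)$, and of preservation of joins and meets are all sound. One small economy: the structural induction in your final paragraph is redundant, since the single admissibility computation applied to the pair $(\gamma,\alpha)$ already yields $\rho_{B}(u^{\bullet}(\alpha))=\rho_{B}(u^{\bullet}(\gamma))$ for every $\alpha\in C(A)$ in one step.
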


By using the previous proposition it is easy to see that the assignments $A\mapsto L(A)$ and $u\mapsto L(u)$ define a covariant functor $L$ defined from the category $\mathcal{V}^{\star}$ to the category $D_{0,1}$ of bounded distributive lattices. The functor $L:\mathcal{V}^{\star}\rightarrow D_{0,1}$ will be called the reticulation functor. The paper \cite{GKM} contains a lot of properties and transfer properties of reticulation functor $L:\mathcal{V^{tilde}}\rightarrow D_{0,1}$ defined in \cite{GKM}. In a future paper we shall investigate how the results of \cite{GM1} and \cite{GKM} can be extended to the reticulation functor $L:\mathcal{V}^{\star}\rightarrow D_{0,1}$.

$(2)$ Recall Proposition 40 from \cite{Belluce1}: a semiprime ring $R$ is quasi-commutative if and only if the prime spectrum $Spec(R)$ is a spectral space. In \cite{Klep} it was proven that it is not true. An interesting question is to characterize the subclass of $\mathcal{V}$ whose members verify this equivalence.

$(3)$ By Proposition 4.4, for any semiprime algebra $A\in \mathcal{V}$, the Boolean algebras $B(Con(A))$ and $B(L(A))$ are isomorphic. We have no example of algebra $A\in \mathcal{V}$ or $A\in \mathcal{V}^{\star}$ for which $B(Con(A))$ and $B(L(A))$ are not isomorphic. Characterizes the class of algebras in $\mathcal{V}$ for which Proposition 4.4 remains valid. Is this class larger than the class of semiprime algebras?

$(4)$ According to a famous Hochster theorem \cite{Hochster}, for each bounded distributive lattice $L$ there exists a commutative ring $R$ such that the reticulation $L(R)$ of $R$ and $L$ are isomorphic lattices. Then for each quasi-commutative algebra $A\in \mathcal{V}$ there exists a commutative ring $R$ such that the reticulations $L(A)$ and $L(R)$ are isomorphic. In this way we obtain a bridge between the quasi-commutative algebras of $\mathcal{V}$ and the commutative rings.

Can we use this bridge for exporting results from commutative rings to quasi-commutative algebras? For example, we think that the theory of Gelfand commutative rings (\cite{Agliano}, \cite{Johnstone}) can be transported in a theory of congruence-normal quasi-commutative algebras (see Section 8 of \cite{GM1}).

$(5)$  Can we use the reticulation for obtaining sheaf representations for the quasi-commutative algebras of $\mathcal{V}$? It would be interesting if using Hochster theorem and reticulation, we could transfer  sheaf representations of commutative rings (see \cite{Johnstone}) to sheaf representations of  quasi-commutative algebras of $\mathcal{V}$.

$(6)$ Given a semidegenerate congruence-modular variety $\mathcal{V}$ we can formulate the following "Spectrum Problem": characterize the bounded distributive lattices $L$ for which there exists a quasi-commutative algebra $A\in \mathcal{V}$ such that the reticulation $L(A)$ of $A$ is isomorphic with $L$. In the case of commutative rings, the Spectrum Problem is solved by the Hochster theorem \cite{Hochster}. Recently, Lenzi and Di Nola gave a solution of the Spectrum Problem for MV-algebras (see \cite{Lenzi}). To solve Spectrum Problem corresponding to a semidegenerate congruence-modular variety $\mathcal{V}$ is an important open question.


\begin{thebibliography}{200}
\bibitem{Aghajani} M. Aghajani, A. Tarizadeh, Characterization of Gelfand rings, specially clean rings and their dual rings, Results Math.,75:125,2020
\bibitem{Agliano} P. Agliano, Prime spectra in modular varieties, Algebra Universalis, 30, 1993, 581 - 597
\bibitem{Atiyah} M. F. Atiyah, I. G. MacDonald, Introduction to Commutative Algebra, Addison-Wesley Publ. Comp., 1969

\bibitem{Al-Ezeh2} H. Al- Ezeh, Further results on reticulated rings, Acta Math. Hung., 60 (1-2), 1992, 1 - 6
\bibitem{BalbesDwinger} R. Balbes, Ph. Dwinger, Distributive Lattices, Univ. of Missouri Press, 1974
\bibitem{Belluce0} L. P. Belluce, Semisimple algebras of infinite valued logic and bold fuzzy set theory, Canadian J. Math., 38, 1986, 1356 - 1379
\bibitem{Belluce} L. P. Belluce, Spectral spaces and non-commutative rings, Communications in Algebra, 19(7), 1991, 1855 - 1865
\bibitem{Belluce1} L. P. Belluce, Spectral closure for non-commutative rings, Communications in Algebra, 25(5), 1997, !513 - 1536
\bibitem{Birkhoff} G. Birkhoff, Lattice Theory, 3rd ed., AMS Collocquium Publ. Vol. 25, 1967
\bibitem{Burris} S. Burris, H. P. Sankappanavar, A Course in Universal Algebra, Graduate Texts in Mathematics, 78, Springer Verlag, 1881

\bibitem{Dickmann} M. Dickmann, N. Schwartz, M. Tressl, Spectral Spaces, Cambridge Univ. Press., 2019

\bibitem{Fresee} R. Fresee, R. McKenzie, Commutator Theory for Congruence Modular Varieties, Cambridge Univ. Press, 1987
\bibitem{Galatos} N. Galatos, P. Jipsen, T. Kowalski, H. Ono, Residuated Lattices: An Algebraic Glimpse at Structural Logics, Studies in Logic and The Foundation of Mathematics, 151, Elsevier, 2007
\bibitem{GM1} G. Georgescu, Reticulation functor and the transfer properties, ArXiv: 2205.02174v1[math.LO] 4 May 2022
\bibitem{GM2} G. Georgescu, C. Mure\c{s}an, The reticulation of a universal algebra, Scientific Annals of Computer Science, 28, 2018, 67 - 113

\bibitem{GKM} G. Georgescu, L. Kwuida, C. Mure\c{s}an, Functorial properties of the reticulation of a universal algebra, J. Applied Logic, 8(5), 2021, 102 - 132

\bibitem{Hochster} M. Hochster, Prime ideals structures in commutative rings, Trans.Amer.Math.Soc., 142, 1969, 43 - 60
\bibitem{Jipsen} P. Jipsen, Generalization of Boolean products for lattice-ordered algebras, Annals Pure Appl. Logic, 161, 2009, 224 - 234
\bibitem{Johnstone} P. T. Johnstone, Stone Spaces, Cambridge Univ. Press, 1982
\bibitem{Kaplansky} I. Kaplansky, Topics in Commutative Ring Theory, Dept. of Math, University of Chicago
\bibitem{Klep} J. Klep, M. Tressl, The prime spectrum and the extended prime spectrum of noncommutative rings, Algebra Represent. Theory, 2017, 10:257 - 270
\bibitem{Kollar} J. Kollar, Congruences and one - element subalgebras, Algebra Universalis, 9, 1979, 266 - 276
\bibitem{Kowalski} T. Kowalski, H. Ono, Residuated Lattices: An Algebraic Glimpse at Logics without Contraction, manuscript, 2000
\bibitem{Lenzi} G. Lenzi, A. Di Nola, The spectrum problem for abelian $l$ - groups and $MV$ - algebras, Algebra Universalis, 81(3), 2020
\bibitem{Leustean} L. Leu\c{s}tean, The maximal and prime spectra of BL-algebras and the reticulation of BL-algebras, Central European J. Math.,1(3), 2003, 382 - 397
\bibitem{g} L. Leu\c{s}tean, Representations of many-valued algebras, Editura Universitara, Bucharest, 2010
\bibitem{Muresan} C. Mure\c{s}an, Algebras of many-valued logic. Contributions to the theory of residuated lattices, Ph.D.Thesis, Bucharest University, 2009
\bibitem{Simmons} H. Simmons, Reticulated rings, J. Algebra, 66, 1980, 169 - 192
\bibitem{Speed} T. P. Speed, Spaces of ideals of distributive lattices II. Minimal prime ideals, J. Australian Math. Soc., 18, 1974 - 72

\end{thebibliography}
\end{document}